\newtheorem{theorem}{Theorem}[section]
\newtheorem{lemma}[theorem]{Lemma}
\newtheorem{problem}[theorem]{Problem}
\title{Development of a new sixth order accurate compact scheme for two and three dimensional Helmholtz equation}
\author[1]{Neelesh Kumar}
\author[2]{Ritesh Kumar Dubey}
\affil[1]{Research Institute, SRM Institute of Science \& Technology, Chennai, India}
\affil[2]{Research Institute \& Department of Mathematics, SRM Institute of Science \& Technology, Chennai, India}
\affil[1]{neeleshkumar.r@res.srmuniv.ac.in}
\affil[2]{riteshkumar.d@res.srmuniv.ac.in}
\date{}
\begin{document}
	
	\maketitle
	
	\begin{abstract}
		In this work, a new compact sixth order accurate finite difference scheme for the two and three-dimensional Helmholtz equation is presented. The main significance of the proposed scheme is that its sixth order leading truncation error term does not explicitly depend on the associated wave number. This makes the scheme robust to work for the Helmholtz equation even with large wave numbers. The convergence analysis of the new scheme is given.  Numerical results for various benchmark test problems are given to support the theoretical estimates. These numerical results confirm the accuracy and robustness of the proposed scheme.
	\end{abstract}	
	\textbf{Keywords:} Finite difference methods, Compact schemes, Convergence,\\
	 Helmholtz equations, Wave number.\\	
	\textbf{AMS subject classifications:} 65N06, 65N12, 65N15, 35J25, 65Z05.
\section{Introduction}
\label{sec:1}
Consider the boundary value problem governed by the Helmholtz equation
\begin{equation}\label{eq1}
\nabla^2 u(\textbf{r})+K^2u(\textbf{r})=f(\textbf{r}),~~ \textbf{r}\in\Omega
\end{equation}
with some associated boundary conditions, where $K$ is the wave number which is constant. The function $f$ and the solution value $u$ are assumed to be sufficiently smooth and they have required continuous and bounded derivatives. $f$ represents a harmonic source term. In this case, it is also assumed that the source function $f$ and its required derivatives are known explicitly.

The Helmholtz equation is a time-harmonic solution of the wave equation. There are many physical phenomena which are described by the Helmholtz equation. Some of these include water wave propagation, acoustic wave scattering, electromagnetic wave propagation. There has been growing interest in constructing compact highly accurate difference schemes for solving partial differential equations \cite{fang2002finite,peiro2005finite,ames2014numerical, bieniasz2008two,chawla1979sixth}, particularly to solve the Helmholtz equations \cite{nabavi2007new,harari1995accurate,shaw1988integral,mehdizadeh2003investigation,harari1991finite}. Various numerical techniques for solving the boundary value problems modeled by the Helmholtz equation have been developed using different approaches, to name a few of them as the finite-difference methods \cite{nabavi2007new,sutmann2007compact,singer1998high}, the boundary element method \cite{shaw1988integral}, the finite-element methods \cite{ainsworth2004discrete,babuska1997pollution} and the spectral-element methods \cite{mehdizadeh2003investigation,shen2005spectral}.

It is observed that the quality of the numerical solution of the Helmholtz problems depends on the wave number. Thus, the numerical solution for the Helmholtz equation (\ref{eq1}) is highly oscillatory in the case of large wave number $K$. Bayliss et al. \cite{bayliss1985accuracy}, Babuska and Sauter \cite{babuska1997pollution} discussed that for a given accuracy, the number of grid points increases with increasing the wave number. Thus for a given mesh, the numerical errors are developed with the wave number. There are many numerical schemes which can provide accurate numerical results for large wave number. On the other hand, compact high order finite difference techniques for the numerical solution of the Helmholtz equation have been used widely since they provide high accuracy with less computational costs. Nabavi et al. \cite{nabavi2007new}, Singer and Turkel \cite{singer2006sixth} discussed the compact sixth order finite difference schemes for the Helmholtz equations in two dimensions. A compact sixth order accurate scheme for the three dimensional Helmholtz equation is presented by Sutmann \cite{sutmann2006high} for constant $K$ and by Turkel et al. \cite{turkel2013compact} for variable wave number. Fu \cite{fu2008compact} developed fourth order accurate finite difference schemes with high wave number $K$ for sufficiently small $Kh$, where $h$ is a spatial mesh step size. In this work, a new compact sixth order finite difference scheme for (\ref{eq1}) is presented which is also suitable for high wave numbers. The main significance of this work is that the leading truncation error of the proposed scheme does not depend explicitly on the wave number $K$. It is shown that the scheme is uniquely solvable. The convergence analysis for the proposed scheme is also discussed. The proposed scheme is compared with the standard sixth order schemes \cite{nabavi2007new,sutmann2007compact}. The resulting scheme is also compact \cite{settle2013derivation} in the sense that it involves only the patch of cells adjacent to a given node in the mesh. We expect that the proposed scheme will be useful for the Helmholtz equation with large wave numbers.

The paper is organized as follows. In Section \ref{sec:2}, a new compact sixth-order scheme for (\ref{eq1}) is derived. In Section \ref{sec:3}, it is proved that the proposed scheme is uniquely solvable. The error bound is also established for the proposed scheme and the standard sixth order scheme \cite{nabavi2007new}. In Section \ref{sec:4}, the numerical results for various benchmark test problems are given to justify the accuracy of the new scheme. At the last, we concluded our work in Section \ref{sec:5}.

\section{Formulation of a new sixth-order accurate approximation}
\label{sec:2}
In the past, Nabavi \cite{nabavi2007new} and others \cite{sutmann2007compact,singer2006sixth} discussed compact sixth order approximations for the Helmholtz equations. The leading truncation error term in a given difference scheme depends on the solution value $u$, the source function $f$ and the wave number $K$ which shows that the quality of the numerical solution depends significantly on the wave number $K$. 
In this work, our effort is to make the scheme whose truncation error is less relevance to the wave number $K$. 
\subsection{Two-Dimensional case}
Consider the two-dimensional Helmholtz equation
\begin{equation}\label{eq15}
\nabla^2u+K^2u(x,y)=f(x,y), ~~(x,y)\in\Omega
\end{equation}
with some Dirichlet boundary
\begin{equation}\label{bc1}
u|_{\partial\Omega}={\Phi}(x,y),~(x,y)\in\partial\Omega
\end{equation}
where $\Omega=[a,b]\times[c,d]\in\mathbb{R}^2$ with its boundary $\partial\Omega$. 
Assuming $u$ to be sufficiently smooth, consider the discretization of rectangular domain $\Omega$ on a compact stencil with mesh step size $\Delta x$ and $\Delta y$ in $x$- \& $y$- directions respectively. 
The discrete grid points along $x$- \& $y$- directions are defined as $x_i=a+ih, y_j=c+jh, i,j=0(1)N.$ 

For ease of notations, we denote the following notations in further derivation
\begin{equation*}
\begin{split}
\partial_x^{r_1}\partial_y^{r_2}\varphi=&\frac{\partial^{r_1+r_2}\varphi}{\partial x^{r_1}\partial y^{r_2}}, r_i\geq 0, r_i\in\mathbb{Z}, \varphi\in\{u,f\},
\nabla^n=\partial_x^n+\partial_y^n, n=2, 4, 6, 8,\\
\mathcal{D}u_{i,j}=&u_{i+1,j+1}+u_{i-1,j+1}+u_{i+1,j-1}+u_{i-1,j-1},\\ \mathcal{F}u_{i,j}=&u_{i+1,j}+u_{i-1,j}+u_{i,j+1}+u_{i,j-1}.
\end{split}
\end{equation*}
%
%
%

The operators used to approximate derivatives with a minimum stencil width are known as compact. The compact scheme amongst the various finite difference replacements has received more attention due to a minimal width of stencils and easy computations. In contrast, high-order difference schemes formulated with non-compact stencils yield a higher bandwidth of the iteration matrix which involves large arithmetic operations.

Using linear combination of minimum number of grid points, the following compact operators corresponding to the second derivatives of $\varphi\in\{u,f\}$ at the grid point $(x_i,y_j),i,j=0(1)N$ are given by
\begin{equation}\label{eq16}
\delta_x^2\varphi_{i,j}=\frac{\varphi_{i-1,j}-2\varphi_{i,j}+\varphi_{i+1,j}}{h^2},~~ \delta_y^2\varphi_{i,j}=\frac{\varphi_{i,j-1}-2\varphi_{i,j}+\varphi_{i,j+1}}{h^2}.
\end{equation}
Similarly, the composite operator for the mixed derivatives of $\varphi\in\{u,f\}$ is defined as
\begin{equation}\label{eq17}
\begin{split}
\delta_x^2\delta_y^2\varphi_{i,j}=\frac{\mathcal{D}\varphi_{i,j}+4\varphi_{i,j}-2\mathcal{F}\varphi_{i,j}}{h^4}.
\end{split}
\end{equation}
Then for sufficiently smooth $\varphi\in\{u,f\}$, Taylor series expansion to (\ref{eq16})-(\ref{eq17}) gives
\begin{equation}\label{eq18}
\delta_x^2\varphi=\partial_x^2\varphi+\frac{h^2}{12}\partial_x^4\varphi+\frac{h^4}{360}\partial_x^6\varphi+O(h^6),
\delta_y^2\varphi=\partial_y^2\varphi+\frac{h^2}{12}\partial_y^4\varphi+\frac{h^4}{360}\partial_y^6\varphi+O(h^6),
\end{equation}
\begin{equation}\label{eq18b}
\delta_x^2\delta_y^2\varphi=\partial_x^2\partial_y^2\varphi+\frac{h^2}{12}(\partial_x^4\partial_y^2+\partial_x^2\partial_y^4)\varphi+O(h^4).
\end{equation}
Let $u_{i,j}$ be the discrete value of the solution value $u$ which satisfies (\ref{eq15}), then
substituting the $O(h^2)$- approximations of derivatives from (\ref{eq18}) into  (\ref{eq15}), we get
\begin{equation}\label{eq19}
\delta_x^2u_{i,j}+\delta_y^2u_{i,j}+K^2u_{i,j}+T_{i,j}=f_{i,j},
\end{equation}
where $f_{i,j}=f(x_i,y_j)$ and $T_{i,j}$ is the leading truncation error, and is given by
\begin{equation}\label{eq20}
T_{i,j}=-\frac{h^2}{12}(\partial_x^4+\partial_y^4)u_{i,j}-\frac{h^4}{360}(\partial_x^6+\partial_y^6)u_{i,j}+O(h^6).
\end{equation}
For getting sixth-order accurate schemes, we have to find fourth and second order approximation of respectively fourth and sixth derivatives of solution values. 

To achieve this, differentiating  (\ref{eq15}) twice with respect to $x,y$ and solving for fourth derivatives, we get
\begin{equation}\label{eq21}
\begin{split}
\partial_x^4u=\partial_x^2(f-K^2u)-\partial_x^2\partial_y^2u,~
\partial_y^4u=\partial_y^2(f-K^2u)-\partial_x^2\partial_y^2u,
\end{split}
\end{equation}
Further differentiating (\ref{eq21}) twice with respect to $x,y$
\begin{equation}\label{eq22}
\begin{split}
\partial_x^6u=\partial_x^4(f-K^2u)-\partial_x^4\partial_y^2u,~
\partial_y^6u=\partial_y^4(f-K^2u)-\partial_x^2\partial_y^4u.
\end{split}
\end{equation}
Also from (\ref{eq15}), we have
\begin{equation}\label{eq23}
\begin{split}
(\partial_x^4\partial_y^2+\partial_x^2\partial_y^4)u=\partial_x^2\partial_y^2(\partial_x^2u+\partial_y^2u)=\partial_x^2\partial_y^2f-K^2\partial_x^2\partial_y^2u.
\end{split}
\end{equation}
Using (\ref{eq18b}), we get $O(h^2)$-approximations of (\ref{eq23})
\begin{equation}\label{eq24}
\begin{split}
(\partial_x^4\partial_y^2+\partial_x^2\partial_y^4)u=\delta_x^2\delta_y^2f-K^2\delta_x^2\delta_y^2u+O(h^2).
\end{split}
\end{equation}
Substituting (\ref{eq24}) into (\ref{eq18b}), we will get $O(h^4)$-approximation of $\partial_x^2\partial_y^2u$ as
\begin{equation}\label{eq25}
\partial_x^2\partial_y^2u=\delta_x^2\delta_y^2u-\frac{h^2}{12}\delta_x^2\delta_y^2f+\frac{h^2K^2}{12}\delta_x^2\delta_y^2u+O(h^4).
\end{equation}
Now combining equations in (\ref{eq21}) and using (\ref{eq25}), we get $O(h^4)$-approximation of $\nabla^4u$ at the grid $(x_i,y_j)$ as
\begin{equation}\label{eq26D2}
\begin{split}
\nabla^4u_{i,j}=\nabla^2f-K^2(f-K^2u)-\left(2+\frac{h^2K^2}{6}\right)\delta_x^2\delta_y^2u+\frac{h^2}{6}\delta_x^2\delta_y^2f+O(h^4).
\end{split}
\end{equation}
Again combining both equations in (\ref{eq22}) and using (\ref{eq21}) and (\ref{eq23}), we have
\begin{equation}\label{eq27D2}
\nabla^6u=\nabla^4f-K^2\nabla^2f+K^4\nabla^2u+\partial_x^2\partial_y^2(3K^2u-f).
\end{equation}
From (\ref{eq15}), (\ref{eq18}) and (\ref{eq18b}), we get $O(h^2)$-approximations of $\nabla^6u$ at the grid $(x_i,y_j)$
\begin{equation}\label{eq28D2}
\begin{split}
\nabla^6u_{i,j}=&(\delta_x^2\partial_x^2+\delta_y^2\partial_y^2)f_{i,j}-K^2\nabla^2f_{i,j}+K^4f_{i,j}-K^6u_{i,j}\\
&+\delta_x^2\delta_y^2(3K^2u-f)_{i,j}+O(h^2).
\end{split}
\end{equation}
Substituting equations (\ref{eq26D2}) and (\ref{eq28D2}) into (\ref{eq20}), we have
\begin{equation}\label{eq29D2}
\begin{split}
T_{i,j}=&E_1(K^2u-f)_{i,j}+\delta_x^2\delta_y^2(E_2u+E_3f)_{i,j}+E_4\nabla^2f_{i,j}\\
&+E_5(\delta_x^2\partial_x^2+\delta_y^2\partial_y^2)f_{i,j}+O(h^6),
\end{split}
\end{equation}
where,
\begin{equation*}
\begin{split}
E_1=&\left(\frac{K^4h^4}{360}-\frac{K^2h^2}{12}\right),
E_2=\left(\frac{h^2}{6}+\frac{K^2h^4}{180}\right),
E_3=-\frac{h^4}{90},\\
E_4=&\left(\frac{K^2h^4}{360}-\frac{h^2}{12}\right), E_5=-\frac{h^4}{360}.
\end{split}
\end{equation*}
Substituting $T_{i,j}$ from (\ref{eq29D2}) into (\ref{eq19}), we get the standard sixth-order scheme \cite{nabavi2007new} for (\ref{eq15})
\begin{equation}\label{eq26}
\begin{split}
&\left(\delta_x^2+\delta_y^2+\frac{h^2}{6}\left(1+\frac{K^2h^2}{30}\right)\delta_x^2\delta_y^2\right)u_{i,j}+\left(1-\frac{K^2h^2}{12}\left(1-\frac{K^2h^2}{30}\right)\right)K^2u_{i,j}\\
=&\left(1-\frac{K^2h^2}{12}\left(1-\frac{K^2h^2}{30}\right)\right)f_{i,j}+\frac{h^2}{12}\left(1-\frac{K^2h^2}{30}\right)\left(\partial_x^2+\partial_y^2\right)f_{i,j}\\
&+\frac{h^4}{360}\left(\delta_x^2\partial_x^2+\delta_y^2\partial_y^2\right)f_{i,j}+\frac{h^4}{90}\delta_x^2\delta_y^2f_{i,j}+T_1^{(2)},
\end{split}
\end{equation}
where the leading truncation error term is given by
\begin{equation}\label{eq27}
\begin{split}
T_1^{(2)}=&\frac{K^2h^6}{2160}(\partial_x^4\partial_y^2+\partial_x^2\partial_y^4)u+\frac{h^6}{20160}(\partial_x^8+\partial_y^8)u+\frac{h^6}{2160}(\partial_x^6\partial_y^2+\partial_x^2\partial_y^6)u\\
&+\frac{h^6}{864}\partial_x^4\partial_y^4u-\frac{h^6}{4320}\partial_x^6\partial_y^6f-\frac{h^6}{1080}(\partial_x^4\partial_y^2+\partial_x^2\partial_y^4)f+O(h^8).
\end{split}
\end{equation}
From (\ref{eq27}), it is clear that $T_1^{(2)}$ depends upon the solution value $u$, the source function $f$ and it also explicitly depends on the wave number $K$.

For simplicity, we omit subscripts $i,j$ in $T_1^{(2)}$ and in further derivation as well. 

Now from (\ref{eq15}) and (\ref{eq21}), we have
\begin{equation}\label{eq28}
\begin{split}
(\partial_x^6\partial_y^2+\partial_x^2\partial_y^6)u=&\partial_x^2\partial_y^2(\partial_x^4u+\partial_y^4u)=\partial_x^2\partial_y^2(\nabla^2f-K^2\nabla^2u-2\partial_x^2\partial_y^2u)\\
=&(\partial_x^4\partial_y^2+\partial_x^2\partial_y^4)f-K^2\partial_x^2\partial_y^2(f-K^2u)-2\partial_x^4\partial_y^4u.
\end{split}
\end{equation}
Now, differentiating  (\ref{eq22}) twice w.r.t. $x,y$, and combining both, we get
\begin{equation}\label{eq29}
\begin{split}
\nabla^8u=\nabla^6f-K^2\nabla^6u-(\partial_x^6\partial_y^2+\partial_x^2\partial_y^6)u.
\end{split}
\end{equation}
Using (\ref{eq27D2}) and (\ref{eq28}) into (\ref{eq29}) , we have
\begin{equation}\label{eq30}
\begin{split}
\nabla^8u=&\nabla^6f-(\partial_x^4\partial_y^2+\partial_x^2\partial_y^4)f-K^2\nabla^4f+K^4\nabla^2f+2K^2\partial_x^2\partial_y^2(f-2K^2u)\\
&-K^6\nabla^2u+2\partial_x^4\partial_y^4u.
\end{split}
\end{equation}
Substituting (\ref{eq23}), (\ref{eq28}), and (\ref{eq30}) into (\ref{eq27})
\begin{equation}\label{eq312D}
\begin{split}
T_1^{(2)}=&\gamma_1K^2h^6(K^2\nabla^2f+2\partial_x^2\partial_y^2f
-\nabla^4f-K^4\nabla^2u-4K^2\partial_x^2\partial_y^2u)\\
&+\gamma_2h^6\partial_x^4\partial_y^4u-\frac{11\gamma_1h^6}{3}\nabla^6f
-\frac{31\gamma_1h^6}{3}(\partial_x^4\partial_y^2+\partial_x^2\partial_y^4)f+O(h^8),
\end{split}
\end{equation}
where $\gamma_1=1/20160, \gamma_2=1/3024$. Using the $O(h^2)$- approximations of the second order derivatives from  (\ref{eq18})-(\ref{eq18b}) into (\ref{eq312D}), we have
\begin{equation}\label{eq31}
\begin{split}
T_1^{(2)}=&\gamma_1K^2h^6(K^2(\delta_x^2+\delta_y^2)f+2\delta_x^2\delta_y^2f
-(\delta_x^2\partial_x^2f+\delta_y^2\partial_y^2f)-K^4(\delta_x^2+\delta_y^2)u\\
-&4K^2\delta_x^2\delta_y^2u)+\gamma_2h^6\partial_x^4\partial_y^4u-\frac{\gamma_1h^6}{3}(11\nabla^6f
+31(\partial_x^4\partial_y^2+\partial_x^2\partial_y^4)f)+O(h^8).
\end{split}
\end{equation}
Substituting $T_1^{(2)}$ from (\ref{eq31}) into (\ref{eq26}), we get a new sixth order compact scheme for the three dimensional Helmholtz equation (\ref{eq15})
\begin{equation}\label{eq32}
\begin{split}
\alpha_1\left(\delta_x^2+\delta_y^2\right)u_{i,j}+\alpha_2\delta_x^2\delta_y^2u_{i,j}+K^2\alpha_3u_{i,j}=\beta_1f_{i,j}+\beta_2\left(\delta_x^2+\delta_y^2\right)f_{i,j}\\
+\beta_3\delta_x^2\delta_y^2f_{i,j}+\beta_4(\partial_x^2+\partial_y^2)f_{i,j}+\beta_5(\delta_x^2\partial_x^2+\delta_y^2\partial_y^2)f_{i,j}+T_2^{(2)},
\end{split}
\end{equation}
where
\begin{equation}\label{eq33}
\begin{split}
\alpha_1=&\left(1+\frac{K^6h^6}{20160}\right), \alpha_2=\frac{h^2}{6}\left(1+\frac{K^2h^2}{30}+\frac{K^4h^4}{840}\right),\\ \alpha_3=&\left(1-\frac{K^2h^2}{12}+\frac{K^4h^4}{360}\right), \beta_1=1-\frac{K^2h^2}{12}+\frac{K^4h^4}{360},~ \beta_2=\frac{K^4h^6}{20160},\\
\beta_3=&\frac{h^4}{90}\left(1+\frac{K^2h^2}{112}\right), \beta_4=\frac{h^2}{12}\left(1-\frac{K^2h^2}{30}\right), ~\beta_5=\frac{h^4}{360}\left(1-\frac{K^2h^2}{56}\right).
\end{split}
\end{equation}
And the leading truncation error term to new sixth order scheme is given by
\begin{equation}\label{eq34}
\begin{split}
T_2^{(2)}=\frac{h^6}{3024}\partial_x^4\partial_y^4u-\frac{11h^6}{60480}(\partial_x^6+\partial_y^6)f-\frac{31h^6}{60480}(\partial_x^4\partial_y^2+\partial_x^2\partial_y^4)f.
\end{split}
\end{equation}

From (\ref{eq34}), It is clear that the truncation error $T^{(2)}_2$ does not explicitly depend on the wave number $K$. It depends only on the solution value $u$, the source function $f$.

It is noted that the source function taken here is known at each grid point. Therefore right side of equation (\ref{eq32}) is fully known. If the source function $f$ in equation  (\ref{eq15}) is unknown, then we need at most fourth order accurate approximation of $\partial_x^2f$ and $\partial_y^2f$ which can be obtained by enlarging the cell stencil. Then the resultant system of equation can be solved using some direct or iterative methods \cite{Thomas2013,young2014iterative,hackbusch1994iterative}.
\subsection{Three-Dimensional case}
We consider the Helmholtz equation in three dimensions 
\begin{equation}\label{eq3D1}
\nabla^2u+K^2u(x,y,z)=f(x,y,z), (x,y,z)\in \Omega.
\end{equation}
subject to the Dirichlet boundary condition $u(x,y,z)={\Psi}(x,y,z),~(x,y,z)\in\partial\Omega$,
where $u=u(x,y,z)$ is a sufficiently differentiable function and $\Omega=[a_1,a_2]\times[b_1,b_2]\times[c_1,c_2]\in\mathbb{R}^3$ with its boundary $\partial\Omega$. We consider the discretization of $\Omega$ on a compact 27-point cell stencil with uniform mesh step size $h$ in each direction of the coordinate axis. The internal grid points along $x$-, $y$- \& $z$- directions are $x_i=a_1+ih, y_j=b_1+jh, z_k=c_1+kh, i,j,k=0(1)N,$ and $u_{i,j,k}=u(x_i,y_j,z_k)$ be the discrete value of the solution $u$ at this grid.

For ease of notations, let we  denote some index sets $\Lambda_0=\{0,2\}, \Lambda_1=\{0,2,6\}, \Lambda_2=\{0,2,4\}, \Lambda_3=\{0,4\}$, and $\Lambda_4=\{2,4\}$. Also we denote some operators for partial derivatives by
\begin{equation*}
\begin{split}
\partial_x^{r_1}\partial_y^{r_2}\partial_z^{r_3}\varphi=&\frac{\partial^{r_1+r_2+r_3}\varphi}{\partial x^{r_1}\partial y^{r_2}\partial z^{r_3}}, r_i\geq 0, r_i\in \mathbb{Z}, \varphi\in\{u,f\},\\
\nabla^n= &\partial_x^n+\partial_y^n+\partial_z^n, n=2, 4, 6, 8.
\end{split}
\end{equation*}
The following standard finite difference compact operators for the second derivatives of $\varphi\in\{u,f\}$ at the grid point $(x_i,y_j,z_k)$ are given by
\begin{equation}\label{op1}
\begin{split}
\delta_x^2\varphi_{i,j,k}=&\frac{\varphi_{i-1,j,k}-2\varphi_{i,j,k}+\varphi_{i+1,j,k}}{h^2},~ \delta_y^2\varphi_{i,j,k}=\frac{\varphi_{i,j-1,k}-2\varphi_{i,j,k}+\varphi_{i,j+1,k}}{h^2},\\
\delta_z^2\varphi_{i,j,k}=&\frac{\varphi_{i,j,k-1}-2\varphi_{i,j,k}+\varphi_{i,j,k+1}}{h^2}.
\end{split}
\end{equation}
Thus for sufficiently smooth $\varphi\in\{u,f\}$, Taylor series expansion to (\ref{op1}) gives
\begin{equation}\label{op2}
\begin{split}
\partial_x^2\varphi_{i,j,k}=&\delta_x^2\varphi_{i,j,k}+O(h^2),\partial_y^2\varphi_{i,j,k}=\delta_y^2\varphi_{i,j,k}+O(h^2),\\
\partial_z^2\varphi_{i,j,k}=&\delta_z^2\varphi_{i,j,k}+O(h^2).
\end{split}
\end{equation}
From (\ref{op1}), The composite operators for the mixed derivatives of $\varphi\in\{u,f\}$
\begin{equation}\label{op3}
\begin{split}
\partial_x^2\partial_y^2\varphi_{i,j,k}=&\delta_x^2\delta_y^2\varphi_{i,j,k}+O(h^2),~ \partial_y^2\partial_z^2\varphi_{i,j,k}=\delta_y^2\delta_z^2\varphi_{i,j,k}+O(h^2),\\ \partial_x^2\partial_z^2\varphi_{i,j,k}=&\delta_x^2\delta_z^2\varphi_{i,j,k}+O(h^2),~ \partial_x^2\partial_y^2\partial_z^2\varphi_{i,j,k}=\delta_x^2\delta_y^2\delta_z^2\varphi_{i,j,k}+O(h^2).
\end{split}
\end{equation}
Let $u_{i,j,k}$ satisfies (\ref{eq3D1}) at the grid $(x_i,y_j,z_k)$, then from (\ref{op2}), we have
\begin{equation}\label{eq3D2}
(\delta_x^2+\delta_y^2+\delta_z^2)u_{i,j,k}+K^2u_{i,j,k}+T_{i,j,k}=f_{i,j,k},
\end{equation}
where $f_{i,j,k}=f(x_i,y_j,z_k)$ and the local truncation error is given by
\begin{equation}\label{eq3D3}
\begin{split}
T_{i,j,k}&=\nabla^2u_{i,j,k}-(\delta_x^2+\delta_y^2+\delta_z^2)u_{i,j,k}=-\frac{h^2}{12}\nabla^4u_{i,j,k}-\frac{h^4}{360}\nabla^6u_{i,j,k}+O(h^6).
\end{split}
\end{equation}
In order to get $O(h^4)$-approximation of $\nabla^4u$, $O(h^2)$-approximation of $\nabla^6u$, differentiating (\ref{eq3D1}) twice w.r.t. $x, y, z$ and solving for fourth derivatives, we get
\begin{equation}\label{eq3D5}
\begin{split}
\partial_x^4u=&\partial_x^2(f-K^2u)-\partial_x^2(\partial_y^2+\partial_z^2)u,
\partial_y^4u=\partial_y^2(f-K^2u)-\partial_y^2(\partial_x^2+\partial_z^2)u,\\
\partial_z^4u=&\partial_z^2(f-K^2u)-\partial_z^2(\partial_x^2+\partial_y^2)u.
\end{split}
\end{equation}
Again by differentiating forcing function of (\ref{eq3D1}), we have mixed derivatives as
\begin{equation}\label{eq3D10}
\begin{split}
\partial_x^2\partial_y^2f=(\partial_x^4\partial_y^2+\partial_x^2\partial_y^4)u+\partial_x^2\partial_y^2\partial_z^2u+K^2\partial_x^2\partial_y^2u,\\
\partial_y^2\partial_z^2f=(\partial_y^4\partial_z^2+\partial_y^2\partial_z^4)u+\partial_x^2\partial_y^2\partial_z^2u+K^2\partial_y^2\partial_z^2u,\\
\partial_x^2\partial_z^2f=(\partial_x^4\partial_z^2+\partial_x^2\partial_z^4)u+\partial_x^2\partial_y^2\partial_z^2u+K^2\partial_x^2\partial_z^2u.
\end{split}
\end{equation}
Further differentiating (\ref{eq3D5}) twice w.r.t. $x, y, z$ for sixth derivatives
\begin{equation}\label{eq3D14}
\begin{split}
\partial_x^6u=&\partial_x^4(f-K^2u)-\partial_x^4\partial_y^2u-\partial_x^4\partial_z^2u,
\partial_y^6u=\partial_y^4(f-K^2u)-\partial_x^2\partial_y^4u-\partial_y^4\partial_z^2u,\\
\partial_z^6u=&\partial_z^4(f-K^2u)-\partial_x^2\partial_z^4u-\partial_y^2\partial_z^4u.
\end{split}
\end{equation}
Combining the equations in (\ref{eq3D10}), then we have
\begin{equation}\label{eq3D11}
\begin{split}
\sum_{\substack{i,j,k\in\Lambda_2\\i+j+k=6\\i\neq j}}{\partial_x^i\partial_y^j\partial_z^ku}=\sum_{\substack{i,j,k\in\Lambda_0\\i+j+k=4}}{\partial_x^i\partial_y^j\partial_z^k(f-K^2u)}-3\partial_x^2\partial_y^2\partial_z^2u.
\end{split}
\end{equation}
Using $O(h^2)$-approximations from (\ref{op3}) into (\ref{eq3D11}), we get 
\begin{equation}\label{eq3D111}
\begin{split}
\sum_{\substack{i,j,k\in\Lambda_2\\i+j+k=6\\i\neq j}}{\partial_x^i\partial_y^j\partial_z^ku}
=\sum_{\substack{i,j,k\in\Lambda_0\\i+j+k=4}}{\delta_x^i\delta_y^j\delta_z^k(f-K^2u)}-3\delta_x^2\delta_y^2\delta_z^2u+O(h^2).
\end{split}
\end{equation}
For $O(h^4)$- approximations of $\partial_x^2\partial_y^2u,\partial_y^2\partial_z^2u,\partial_x^2\partial_z^2u$, we consider
\begin{equation}\label{eq3D7}
\begin{split}
\partial_x^2\partial_y^2u=\delta_x^2\delta_y^2u+T_1,
\partial_y^2\partial_z^2u=\delta_y^2\delta_z^2u+T_2,
\partial_x^2\partial_z^2u=\delta_x^2\delta_z^2u+T_3,
\end{split}
\end{equation}
where, $T_i's$ can be obtained by using the Taylor series expansion to (\ref{eq3D7})
\begin{equation}\label{eq3D8}
\begin{split}
T_1=&-\frac{h^2}{12}(\partial_x^4\partial_y^2u+\partial_x^2\partial_y^4u)+O(h^4),
T_2=-\frac{h^2}{12}(\partial_y^4\partial_z^2u+\partial_y^2\partial_z^4u)+O(h^4),\\
T_3=&-\frac{h^2}{12}(\partial_x^4\partial_z^2u+\partial_x^2\partial_z^4u)+O(h^4).
\end{split}
\end{equation}
Adding equations in (\ref{eq3D7}) and using (\ref{eq3D8}), we have
\begin{equation}\label{eq3D9}
\sum_{\substack{i,j,k\in\Lambda_0\\i+j+k=4}}{\partial_x^i\partial_y^j\partial_z^ku}=\sum_{\substack{i,j,k\in\Lambda_0\\i+j+k=4}}{\delta_x^i\delta_y^j\delta_z^ku}-\frac{h^2}{12}\sum_{\substack{i,j,k\in\Lambda_2\\i+j+k=6\\i\neq j}}{\partial_x^i\partial_y^j\partial_z^ku}+O(h^4).
\end{equation}
Now using $O(h^2)$-approximation from (\ref{eq3D111}) into (\ref{eq3D9}), we get $O(h^4)$ approximation
\begin{equation}\label{eq3D6}
\begin{split}
\sum_{\substack{i,j,k\in\Lambda_0\\i+j+k=4}}{\partial_x^i\partial_y^j\partial_z^ku}&=\sum_{\substack{i,j,k\in\Lambda_0\\i+j+k=4}}{\delta_x^i\delta_y^j\delta_z^ku}-\frac{h^2}{12}\sum_{\substack{i,j,k\in\Lambda_0\\i+j+k=4}}{\delta_x^i\delta_y^j\delta_z^k(f-K^2u)}\\
&+\frac{h^2}{4}\delta_x^2\delta_y^2\delta_z^2u+O(h^4).
\end{split}
\end{equation}
Adding equations in (\ref{eq3D5}) and using (\ref{eq3D6}), we have $\nabla^4u$ at the grid $(x_i,y_j,z_k)$
\begin{equation}\label{eq3D66}
\begin{split}
\nabla^4u_{i,j,k}=&\nabla^2f_{i,j,k}-K^2f_{i,j,k}+K^4u_{i,j,k}-\frac{h^2}{2}\delta_x^2\delta_y^2\delta_z^2u_{i,j,k}\\
-&2\sum_{\substack{i,j,k\in\Lambda_0\\i+j+k=4}}{\delta_x^i\delta_y^j\delta_z^ku_{i,j,k}}+\frac{h^2}{6}\sum_{\substack{i,j,k\in\Lambda_0\\i+j+k=4}}{\delta_x^i\delta_y^j\delta_z^k(f-K^2u)_{i,j,k}}+O(h^4).
\end{split}
\end{equation}
To find $\nabla^6u$, adding the equations in (\ref{eq3D14}) and using (\ref{eq3D5}) and (\ref{eq3D11})
\begin{equation}\label{eq3D15}
\begin{split}
\nabla^6u&=\nabla^4f-K^2\nabla^2f+K^4\nabla^2u+\sum_{\substack{i,j,k\in\Lambda_0\\i+j+k=4}}{\partial_x^i\partial_y^j\partial_z^k}(3K^2u-f)+3\partial_x^2\partial_y^2\partial_z^2u.
\end{split}
\end{equation}
Substituting approximations from (\ref{op2})-(\ref{op3}) into (\ref{eq3D15}), we have $O(h^2)$ approximations of $\nabla^6u$ at the grid $(x_i,y_j,z_k)$
\begin{equation}\label{eq3D13}
\begin{split}
\nabla^6u_{i,j,k}=&(\delta_x^2\partial_x^2+\delta_y^2\partial_y^2+\delta_z^2\partial_z^2)f_{i,j,k}-K^2\nabla^2f_{i,j,k}+K^4f_{i,j,k}-K^6u_{i,j,k}\\
&+3\delta_x^2\delta_y^2\delta_z^2u_{i,j,k}+\sum_{\substack{i,j,k\in\Lambda_0\\i+j+k=4}}{\delta_x^i\delta_y^j\delta_z^k(3K^2u-f)_{i,j,k}}+O(h^2).
\end{split}
\end{equation}
Finally, substituting $O(h^4)$-approximation of $\nabla^4u_{i,j,k}$ from (\ref{eq3D66}) and $O(h^2)$-approximation of $\nabla^6u_{i,j,k}$ from (\ref{eq3D13}) into (\ref{eq3D3}), we get sixth order leading truncation error
\begin{equation}\label{eq3D16}
\begin{split}
T_{i,j,k}=&E_1(K^2u-f)_{i,j,k}+\left(\delta_x^2\delta_y^2+\delta_y^2\delta_z^2+\delta_x^2\delta_z^2\right)(E_2u+E_3f)_{i,j,k}\\
+&E_6\delta_x^2\delta_y^2\delta_z^2u_{i,j,k}+E_4\nabla^2f_{i,j,k}+E_5(\delta_x^2\partial_x^2+\delta_y^2\partial_y^2+\delta_z^2\partial_z^2)f_{i,j,k}+O(h^6),
\end{split}
\end{equation}
where,
\begin{equation*}
\begin{split}
E_2=&\left(\frac{h^2}{6}+\frac{K^2h^4}{180}\right),
E_6=\frac{h^4}{30},
E_1=\left(\frac{K^4h^4}{360}-\frac{K^2h^2}{12}\right),
E_5=-\frac{h^4}{360},\\
E_4=&\left(\frac{K^2h^4}{360}-\frac{h^2}{12}\right),
E_3=-\frac{h^4}{90}.
\end{split}
\end{equation*}
Substituting $T_{i,j,k}$ from (\ref{eq3D16}) into (\ref{eq3D2}), we get sixth order scheme \cite{sutmann2007compact}
\begin{equation}\label{eq3D17}
\begin{split}
(\delta_x^2+\delta_y^2+\delta_z^2)u_{i,j,k}+&A_1(\delta_x^2\delta_y^2+\delta_y^2\delta_z^2+\delta_x^2\delta_z^2)u_{i,j,k}+A_2\delta_x^2\delta_y^2\delta_z^2u_{i,j,k}+A_3u_{i,j,k}\\
=&B_1f_{i,j,k}+B_2(\delta_x^2\delta_y^2+\delta_y^2\delta_z^2+\delta_x^2\delta_z^2)f_{i,j,k}+B_3\nabla^2f_{i,j,k}\\
&+B_4(\delta_x^2\partial_x^2+\delta_y^2\partial_y^2+\delta_z^2\partial_z^2)f_{i,j,k}+T^{(3)}_1,
\end{split}
\end{equation}
where,
\begin{equation}\label{eq3D18}
\begin{split}
T^{(3)}_1=&\alpha_1^{(1)}\nabla^8u-\alpha_2^{(1)}\nabla^6f+\alpha_3^{(1)}\sum_{\substack{i,j,k\in\Lambda_1\\i+j+k=8}}{\partial_x^i\partial_y^j\partial_z^ku}+K^2\alpha_3^{(1)}\sum_{\substack{i,j,k\in\Lambda_2\\i+j+k=6\\i\neq j}}{\partial_x^i\partial_y^j\partial_z^ku}\\
&+\alpha_4^{(1)}\sum_{\substack{i,j,k\in\Lambda_3\\i+j+k=8}}{\partial_x^i\partial_y^j\partial_z^ku}+\alpha_5^{(1)}\sum_{\substack{i,j,k\in\Lambda_4\\i+j+k=8}}{\partial_x^i\partial_y^j\partial_z^ku}-\alpha_6^{(1)}\sum_{\substack{i,j,k\in\Lambda_2\\i+j+k=6\\i\neq j}}{\partial_x^i\partial_y^j\partial_z^kf}.
\end{split}
\end{equation}
The coefficients of the scheme  and its truncation error term are given by
\begin{equation*}
\begin{split}
A_1=&\frac{h^2}{6}\left(1+\frac{K^2h^2}{30}\right),
A_2=\frac{h^4}{30},
A_3=K^2\left(1-\frac{K^2h^2}{12}+\frac{K^4h^4}{360}\right),\\
B_1=&1-\frac{K^2h^2}{12}+\frac{K^4h^4}{360},
B_2=\frac{h^4}{90},
B_3=\frac{h^2}{12}\left(1-\frac{K^2h^2}{30}\right),
B_4=\frac{h^4}{360},\\
\alpha_1^{(1)}=&\dfrac{h^6}{20160}, \alpha_2^{(1)}=\dfrac{h^6}{4320}, \alpha_3^{(1)}=\dfrac{h^6}{2160}, \alpha_4^{(1)}=\dfrac{h^6}{864}, \alpha_5^{(1)}=\dfrac{h^6}{360}, \alpha_6^{(1)}=\dfrac{h^6}{1080}.
\end{split}
\end{equation*}
For the ease of notations, here and below we omit subscripts $i,j,k$.
Now from (\ref{eq3D1}), we have
\begin{equation}\label{eq3D19}
\sum_{\substack{i,j,k\in\Lambda_4\\i+j+k=8}}{\partial_x^i\partial_y^j\partial_z^ku}=\partial_x^2\partial_y^2\partial_z^2(\partial_x^2u+\partial_y^2u+\partial_z^2u)=\partial_x^2\partial_y^2\partial_z^2f-K^2\partial_x^2\partial_y^2\partial_z^2u.
\end{equation}
Now, differentiating  (\ref{eq3D14}) twice w.r.t. $x,y,z$, and combining them, we get
\begin{equation}\label{eq3D24}
\begin{split}
&\partial_x^8u=\partial_x^6(f-K^2u)-(\partial_x^6\partial_y^2+\partial_x^6\partial_z^2)u, \partial_y^8u=\partial_y^6(f-K^2u)-(\partial_x^2\partial_y^6+\partial_y^6\partial_z^2)u,\\
&\partial_z^8u=\partial_z^6(f-K^2u)-(\partial_x^2\partial_z^6+\partial_y^2\partial_z^6)u.
\end{split}
\end{equation}
Again from (\ref{eq3D1}), we have
\begin{equation}\label{eq3D22}
\begin{split}
&\partial_x^6\partial_y^2u=\partial_x^4\partial_y^2(f-K^2u-\partial_y^2u-\partial_z^2u),\partial_x^2\partial_y^6u=\partial_x^2\partial_y^4(f-K^2u-\partial_x^2u-\partial_z^2u),\\
&\partial_y^6\partial_z^2u=\partial_y^4\partial_z^2(f-K^2u-\partial_x^2u-\partial_z^2u),\partial_y^2\partial_z^6u=\partial_y^2\partial_z^4(f-K^2u-\partial_x^2u-\partial_y^2u),\\
&\partial_x^2\partial_z^6u=\partial_x^2\partial_z^4(f-K^2u-\partial_x^2u-\partial_y^2u),\partial_x^6\partial_z^2u=\partial_x^4\partial_z^2(f-K^2u-\partial_y^2u-\partial_z^2u).
\end{split}
\end{equation}
Combining equations given in (\ref{eq3D22}), we have
\begin{equation}\label{eq3D23}
\begin{split}
\sum_{\substack{i,j,k\in\Lambda_1\\i+j+k=8}}{\partial_x^i\partial_y^j\partial_z^ku}=\sum_{\substack{i,j,k\in\Lambda_2\\i+j+k=6\\i\neq j}}{\partial_x^i\partial_y^j\partial_z^k(f-K^2u)}-2\sum_{\substack{i,j,k\in\Lambda_4\\i+j+k=8}}{\partial_x^i\partial_y^j\partial_z^ku}\\
-2\sum_{\substack{i,j,k\in\Lambda_3\\i+j+k=8}}{\partial_x^i\partial_y^j\partial_z^ku}.
\end{split}
\end{equation}
Adding equations in (\ref{eq3D24}), we get
\begin{equation}\label{eq3D25}
\nabla^8u=\nabla^6(f-K^2u)-\sum_{\substack{i,j,k\in\Lambda_1\\i+j+k=8}}{\partial_x^i\partial_y^j\partial_z^ku}.
\end{equation}
Substituting (\ref{eq3D11}),(\ref{eq3D15}),(\ref{eq3D19}),(\ref{eq3D23}), and (\ref{eq3D25}) into (\ref{eq3D18}), we get
\begin{equation}\label{eq3D26}
\begin{split}
T^{(3)}_1=&\beta_1^{(1)}K^4\nabla^2(K^2u-f)+\partial_x^2\partial_y^2\partial_z^2(\beta_2^{(1)}K^2u+\beta_3^{(1)}f)+\beta_1^{(1)}K^2\nabla^4f\\
+&\beta_6^{(1)}\nabla^6f+2\beta_1^{(1)}K^2\sum_{\substack{i,j,k\in\Lambda_0\\i+j+k=4}}{\partial_x^i\partial_y^j\partial_z^k(2K^2u-f)}\\
+&\beta_4^{(1)}\sum_{\substack{i,j,k\in\Lambda_3\\i+j+k=8}}{\partial_x^i\partial_y^j\partial_z^ku}
+\beta_5^{(1)}\sum_{\substack{i,j,k\in\Lambda_2\\i+j+k=6\\i\neq j}}{\partial_x^i\partial_y^j\partial_z^kf},
\end{split}
\end{equation}
where,
$\beta_1^{(1)}=-\alpha_1^{(1)}, \beta_2^{(1)}=-(8\alpha_1^{(1)}-2\alpha_2^{(1)}+\alpha_4^{(1)}), \beta_3^{(1)}=(2\alpha_1^{(1)}-2\alpha_2^{(1)}+\alpha_4^{(1)}), \beta_4^{(1)}=(2\alpha_1^{(1)}-2\alpha_2^{(1)}+\alpha_3^{(1)}), \beta_5^{(1)}=(\alpha_2^{(1)}-\alpha_1^{(1)}-\alpha_5^{(1)})$, and $\beta_6^{(1)}=\alpha_1^{(1)}-\alpha_6^{(1)}$.\\

Using the $O(h^2)$- approximations from  (\ref{op2})-(\ref{op3}) into (\ref{eq3D26})
\begin{equation}\label{eq3D27}
\begin{split}
T^{(3)}_1=&\beta_1^{(1)}K^6(\delta_x^2+\delta_y^2+\delta_z^2)u-\beta_1^{(1)}K^4(\delta_x^2+\delta_y^2+\delta_z^2)f+\beta_2^{(1)}K^2\delta_x^2\delta_y^2\delta_z^2u\\
+&2\beta_1^{(1)}K^2\sum_{\substack{i,j,k\in\Lambda_0\\i+j+k=4}}{\delta_x^i\delta_y^j\delta_z^k(2K^2u-f)}+\beta_1^{(1)}K^2(\delta_x^2\partial_x^2+\delta_y^2\partial_y^2+\delta_z^2\partial_z^2)f\\
+&\beta_4^{(1)}\sum_{\substack{i,j,k\in\Lambda_3\\i+j+k=8}}{\partial_x^i\partial_y^j\partial_z^ku}+\beta_5^{(1)}\sum_{\substack{i,j,k\in\Lambda_2\\i+j+k=6\\i\neq j}}{\partial_x^i\partial_y^j\partial_z^kf}
+\beta_3^{(1)}\partial_x^2\partial_y^2\partial_z^2f\\
+&\beta_6^{(1)}\nabla^6f+O(h^8).
\end{split}
\end{equation}
Finally, we substitute (\ref{eq3D27}) into (\ref{eq3D17}), to get another sixth order compact scheme for the three dimensional Helmholtz equation (\ref{eq3D1})
\begin{equation}\label{eq3D28}
\begin{split}
&C_1(\delta_x^2+\delta_y^2+\delta_z^2)u_{i,j,k}+C_2(\delta_x^2\delta_y^2+\delta_y^2\delta_z^2+\delta_x^2\delta_z^2)u_{i,j,k}+C_3\delta_x^2\delta_y^2\delta_z^2u_{i,j,k}\\
&+C_4u_{i,j,k}=D_1f_{i,j,k}+D_2(\delta_x^2+\delta_y^2+\delta_z^2)f_{i,j,k}+D_3(\delta_x^2\delta_y^2+\delta_y^2\delta_z^2+\delta_x^2\delta_z^2)f_{i,j,k}\\
&+D_4\nabla^2f_{i,j,k}+D_5(\delta_x^2\partial_x^2+\delta_y^2\partial_y^2+\delta_z^2\partial_z^2)f_{i,j,k}+T^{(3)}_2,
\end{split}
\end{equation}
where $T^{(3)}_2=O(h^6)$ be the sixth order leading truncation error term to new scheme and is given by
\begin{equation}\label{eq3D29}
\begin{split}
T^{(3)}_2=\beta_4^{(1)}\sum_{\substack{i,j,k\in\Lambda_3\\i+j+k=8}}{\partial_x^i\partial_y^j\partial_z^ku}+\beta_5^{(1)}\sum_{\substack{i,j,k\in\Lambda_2\\i+j+k=6\\i\neq j}}{\partial_x^i\partial_y^j\partial_z^kf}
+\beta_3^{(1)}\partial_x^2\partial_y^2\partial_z^2f+\beta_6^{(1)}\nabla^6f,
\end{split}
\end{equation}
where, the coefficients of the new scheme are given by
\begin{equation*}
\begin{split}
C_1=&1+\frac{K^6h^6}{20160},
C_2=\frac{h^2}{6}\left(1+\frac{K^2h^2}{30}+\frac{K^4h^4}{840}\right),
C_3=\frac{h^4}{30}\left(1+\frac{17K^2h^2}{252}\right),\\
C_4=&K^2\left(1-\frac{K^2h^2}{12}+\frac{K^4h^4}{360}\right),
D_1=1-\frac{K^2h^2}{12}+\frac{K^4h^4}{360},
D_2=\frac{K^4h^6}{20160},\\
D_3=&\frac{h^4}{90}\left(1+\frac{K^2h^2}{112}\right),
D_4=\frac{h^2}{12}\left(1-\frac{K^2h^2}{30}\right),
D_5=\frac{h^4}{360}\left(1-\frac{K^2h^2}{56}\right).
\end{split}
\end{equation*}
It is noted that the truncation error $T^{(3)}_2$ does not explicitly depend on the wave number $K$. It depends only on the solution value $u$, the source function $f$.
\section{High order accurate approximation of Neumann boundary}
Dirichlet boundary conditions specify the solution value $u$ at each node of the boundary. Therefore, in case of Dirichlet boundary, the difference schemes can be used for all interior grid points. However, Neumann boundary conditions specify the derivative of the solution value at some part of the boundary. In the later case, the compact schemes can not be used straightforward for all interior grid points since the values at some boundary points are not given.

In this section, a discretization technique is developed for a Neumann boundary condition such that the discretization is consistent with the given sixth-order accurate difference schemes. Here, we consider a sixth-order approximation for $\partial_xu=g(y)$ and $\partial_xu=g(y,z)$ for the two- and three-dimensional Helmholtz equations respectively.
\subsection{Two-dimensional case}
Without loss of generality, we consider a sixth order accurate discretization for a Neumann condition $\partial_xu|_{x=x_0}=g(y)$ in two dimensions, where the function $g$ has required continuous and bounded derivatives. 

Consider the uniform mesh discretization of the domain $\Omega\in\mathbb{R}^2$ as $x_i=x_0+ih, y_j=y_0+jh, i, j=0(1)N$. For the aim, we cosider further discretization of the domain $\Omega$ by adding a row consisting of ghost points $(x_{-1},y_j)=(x_0-h,y_j), j=0(1)N$, outside the domain.

The central difference second order approximation of first derivative gives
\begin{equation*}\label{NBC1}
\partial_xu|_{x=x_0}=\frac{u_{1,j}-u_{-1,j}}{2h}+O(h^2).
\end{equation*}
Using the Taylor series expansion at $(x_0,y_j)$, we get
\begin{equation}\label{NBC2}
\frac{u_{1,j}-u_{-1,j}}{2h}=(\partial_xu)_{0,j}+\frac{h^2}{6}(\partial_x^3u)_{0,j}+\frac{h^4}{120}(\partial_x^5u)_{0,j}+O(h^6).
\end{equation}
Now by successive differentiation of the Helmholtz equation (\ref{eq15}), we get
\begin{equation}\label{NBC3}
\begin{split}
\partial_x^3u=&\partial_xf-K^2\partial_xu-\partial_x\partial_y^2u,\\
\partial_x^5u=&\partial_x^3f-K^2\partial_xf-\partial_x\partial_y^2f+K^4\partial_xu+2K^2\partial_x\partial_y^2u+\partial_x\partial_y^4u.
\end{split}
\end{equation}
Neumann condition $\partial_xu=g(y)$ with its tangential derivatives gives
\begin{equation}\label{NBC4}
\begin{split}
\partial_x^3u=&\partial_xf-K^2g-\partial_y^2g,\\
\partial_x^5u=&\partial_x^3f-K^2\partial_xf-\partial_x\partial_y^2f+
K^4g+2K^2\partial_y^2g+\partial_y^4g.
\end{split}
\end{equation}
Substituting the above derivatives at $(0,j)$ into (\ref{NBC2}), we subsequently get a sixth-order approximation for the Neumann boundary $\partial_xu|_{x=x_0}=g(y)$
\begin{equation}\label{NBC5}
\begin{split}
\frac{u_{1,j}-u_{-1,j}}{2h}=&g_j+\frac{h^2}{6}(\partial_xf)_{0,j}+\frac{h^4}{120}(\partial_x^3f-K^2\partial_xf-\partial_x\partial_y^2f)_{0,j}\\
-&\frac{h^2}{6}(K^2g+\partial_y^2g)_j+\frac{h^4}{120}(K^4g+2K^2\partial_y^2g+\partial_y^4g)_j+O(h^6).
\end{split}
\end{equation}
Similar discretizations hold for the Neumann conditions $\partial_xu|_{x=x_N}=g(y)$, $\partial_yu|_{y=y_0}=g(x)$ and $\partial_yu|_{y=y_N}=g(x)$ in other directions. Using the difference schemes at the boundary point $(x_0,y_j), j=0(1)N$, we can eliminate the values $u_{-1,j}$ at the ghost points.
\subsection{Three-dimensional case}
Here, we now consider a Neumann condition $\partial_xu|_{x=x_0}=g(y,z)$ for the three-dimensional Helmholtz equation, assuming the function $g$ has required continuous and bounded derivatives in $y$ and $z$.

As in case of two dimensions, we also introduce a further discretization of the domain $\Omega\in\mathbb{R}^3$ having ghost points $(x_{-1},y_j,z_k), j,k=0(1)N$, outside the boundary of the domain. Then the central difference approximation of derivative gives
\begin{equation*}
\partial_xu|_{x=x_0}=\frac{u_{1,j,k}-u_{-1,j,k}}{2h}+O(h^2).
\end{equation*}
Taylor series expansion at $(x_0,y_j,z_k)$ gives
\begin{equation}\label{NBC6}
\frac{u_{1,j,k}-u_{-1,j,k}}{2h}=(\partial_xu)_{0,j,k}+\frac{h^2}{6}(\partial_x^3u)_{0,j,k}+\frac{h^4}{120}(\partial_x^5u)_{0,j,k}+O(h^6).
\end{equation}
Successive differentiation of the Helmholtz equation (\ref{eq3D1}) gives
\begin{equation}\label{NBC7}
\begin{split}
\partial_x^3u=&\partial_xf-K^2\partial_xu-\partial_x\partial_y^2u-\partial_x\partial_z^2u,\\
\partial_x^5u=&\partial_x^3f-K^2\partial_xf-\partial_x\partial_y^2f-\partial_x\partial_z^2f+
K^4\partial_xu+2K^2(\partial_x\partial_y^2u+\partial_x\partial_z^2u)\\
&+\partial_x\partial_y^4u+\partial_x\partial_z^4u+2\partial_x\partial_y^2\partial_z^2u.
\end{split}
\end{equation}
Using the Neumann boundary condition $\partial_xu=g(y,z)$ and its tangential derivatives into (\ref{NBC7}), we have
\begin{equation}\label{NBC8}
\begin{split}
\partial_x^3u=&\partial_xf-K^2g-\partial_y^2g-\partial_z^2g,\\
\partial_x^5u=&\partial_x^3f-K^2\partial_xf-\partial_x\partial_y^2f-\partial_x\partial_z^2f+
K^4g+2K^2(\partial_y^2g+\partial_z^2g)+\partial_y^4g+\partial_z^4g\\
&+2\partial_y^2\partial_z^2g.
\end{split}
\end{equation}
Substituting the Neumann boundary $\partial_xu|_{x=x_0}=g(y,z)$ and derivatives $\partial_x^3u$ and $\partial_x^5u$ at $(0,j,k)$ into (\ref{NBC6}), we get
\begin{equation}\label{NBC9}
\begin{split}
\frac{u_{1,j,k}-u_{-1,j,k}}{2h}=&g_{j,k}+\frac{h^2}{6}(\partial_xf)_{0,j,k}\\
+&\frac{h^4}{120}(\partial_x^3f-K^2\partial_xf-\partial_x\partial_y^2f-\partial_x\partial_z^2f)_{0,j,k}\\
-&\frac{h^2}{6}(K^2g+\partial_y^2g+\partial_z^2g)_{j,k}+\frac{h^4}{120}\left(K^4g+2K^2(\partial_y^2g+\partial_z^2g)\right.\\
+&\left.\partial_y^4g+\partial_z^4g+2\partial_y^2\partial_z^2g\right)_{j,k}+O(h^6).
\end{split}
\end{equation}
Similar discretizations hold for the Neumann conditions in other directions. Using the difference schemes at the boundary point $(x_0,y_j,z_k), j,k=0(1)N$, we can eliminate the values $u_{-1,j,k}$ at the ghost points.
\section{Convergence analysis}\label{sec:3}
In this section, the theoretical analysis for the proposed scheme is presented. It is shown that the proposed scheme is uniquely solvable for sufficiently small $Kh$, where $K$ is the wave number and $h$ is the mesh step size. The bound on the error norms to the proposed scheme (\ref{eq32}) is also established. It is proved that as the mesh step size approaches to zero such that $Kh$ is sufficiently small, then the solution of the proposed difference scheme converges to the solution of the corresponding boundary value problem. We consider here a convergence analysis for the proposed scheme (\ref{eq32}) to the Helmholtz equation (\ref{eq15})-(\ref{bc1}) in two dimensions. For the three dimensional case, the treatment is similar to the former.

Let $U_{i,j}$ be the approximate solution to $u_{i,j}$ so that $u_{i,j}=U_{i,j}+O(h^6)$. Consider the difference scheme (\ref{eq32}) in operator form as
\begin{equation}\label{a1}
\mathcal{L}_{i,j}U_{i,j}=F_{i,j},
\end{equation}
where
$\mathcal{L}=-\alpha_1\left(\delta_x^2+\delta_y^2\right)-\alpha_2\delta_x^2\delta_y^2
-K^2\alpha_3,$
\begin{equation*}
F_{i,j}=-\beta_1f_{i,j}-\beta_2\left(\delta_x^2+\delta_y^2\right)f_{i,j}-\beta_3\delta_x^2\delta_y^2f_{i,j}-\beta_4\nabla^2f_{i,j}-\beta_5(\delta_x^2\partial_x^2+\delta_y^2\partial_y^2)f_{i,j},
\end{equation*}
with the corresponding truncation error $T_2^{(2)}$ given by (\ref{eq34}). The coefficients $\alpha_i, \beta_i$'s are given in (\ref{eq33}). Now we will show that the proposed difference scheme is uniquely solvable as $Kh\rightarrow 0$.
\subsection{Solvability of the Difference Scheme}
Rewriting the equation (\ref{a1}) in terms of discrete stencil points
\begin{equation}\label{a2}
\begin{split}
a00U_{i,j}+a10\mathcal{F}U_{i,j}+a20\mathcal{D}U_{i,j}=h^2F_{i,j},
\end{split}
\end{equation}
where the operators $\mathcal{F}$ and $\mathcal{D}$ are same as in (\ref{eq17}) and
\begin{equation}\label{a3}
\begin{split}
a00=&(16800-5152K^2h^2+416K^4h^4-13K^6h^6)/5040,\\
a10=&-(13440-224K^2h^2-8K^4h^4+K^6h^6)/20160,\\
a20=&-(840+28K^2h^2+K^4h^4)/5040.
\end{split}
\end{equation}
The difference equation (\ref{a2}), in the simple matrix form, can be written as
\begin{equation*}
A\textbf{U}=\textbf{r},
\end{equation*}
where $A=[a_{i,j}], ~i,j=1(1)n, n=(N-1)^2$ is a tri-block-diagonal matrix with the following elements\\
\begin{equation}\label{a4}
\begin{split}
\text{For}~~ j=&2(1)N-1:\\
&a_{(j-1)(N-1)+i,(j-2)(N-1)+i-1}=a20,~~ i=2(1)N-1,\\
&a_{(j-1)(N-1)+i,(j-2)(N-1)+i}=a10,~~  i=1(1)N-1,\\
&a_{(j-1)(N-1)+i,(j-2)(N-1)+i+1}=a20,~~  i=1(1)N-2.
\end{split}
\end{equation}
\begin{equation}\label{a5}
\begin{split}
\text{For}~~ j=&1(1)N-1:\\
&a_{(j-1)(N-1)+i,(j-1)(N-1)+i-1}=a10,~~  i=2(1)N-1,\\ 
&a_{(j-1)(N-1)+i,(j-1)(N-1)+i}=a00,~~  i=1(1)N-1,\\ 
&a_{(j-1)(N-1)+i,(j-1)(N-1)+i+1}=a10,~~  i=1(1)N-2. 
\end{split}
\end{equation}
\begin{equation}\label{a6}
\begin{split}
\text{For}~~ j=&1(1)N-2:\\
&a_{(j-1)(N-1)+i,j(N-1)+i-1}=a20,~~ i=2(1)N-1,\\
&a_{(j-1)(N-1)+i,j(N-1)+i}=a10,~~  i=1(1)N-1,\\
&a_{(j-1)(N-1)+i,j(N-1)+i+1}=a20,~~  i=1(1)N-2.
\end{split}
\end{equation}

\begin{figure}
	\centering
	\includegraphics[scale=0.4]{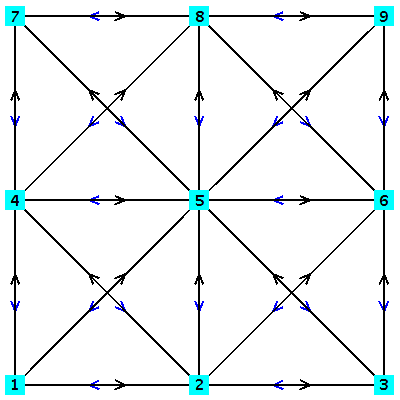}
	\caption{Directed graph for its adjacency matrix $A$ for $N=4$.}
	\label{fig:c1}	
\end{figure}	

Now we claim that the graph of the matrix $A$ is strongly connected and hence we have the following result.
\begin{lemma}\label{lm:1}
	Let $A=[a_{i,j}], i,j=1(1)n, n=(N-1)^2$ be a matrix with its elements $a_{i,j}$ given by (\ref{a4})-(\ref{a6}). When $Kh$ is sufficiently small, where $K$ is the wave number and $h$ is the grid length, then the directed graph $\mathcal{G}(A)$ of the matrix $A$ is strongly connected.
\end{lemma}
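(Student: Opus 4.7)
The plan is to reduce strong connectivity of $\mathcal{G}(A)$ to the nonvanishing of the off-diagonal coefficients $a10$ and $a20$, and then to exhibit an explicit path between any two grid vertices. First I would reindex: the row index $(j-1)(N-1)+i$ corresponds to the interior grid point $(x_i,y_j)$, so the vertex set of $\mathcal{G}(A)$ is naturally identified with $\{(i,j):1\le i,j\le N-1\}$. Reading off (\ref{a4})--(\ref{a6}), the arc set is: from $(i,j)$ to its four axial neighbors $(i\pm 1,j),(i,j\pm 1)$ whenever $a10\neq 0$, and to its four diagonal neighbors $(i\pm 1,j\pm 1)$ whenever $a20\neq 0$. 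Because the nine-point compact stencil is geometrically symmetric, $A$ is symmetric, so each arc in $\mathcal{G}(A)$ has its reverse; consequently strong connectivity is equivalent to connectivity of the underlying undirected graph.

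Next I would carry out the coefficient analysis. Writing $\epsilon=Kh$ and using (\ref{a3}), the limits as $\epsilon\to 0$ are
\begin{equation*}
a10\big|_{\epsilon=0}=-\tfrac{13440}{20160}=-\tfrac{2}{3},\qquad a20\big|_{\epsilon=0}=-\tfrac{840}{5040}=-\tfrac{1}{6},
\end{equation*}
both nonzero. Since $a10(\epsilon)$ and $a20(\epsilon)$ are polynomials in $\epsilon$, continuity gives an $\epsilon_0>0$ such that both remain nonzero for all $0\le \epsilon<\epsilon_0$. This is precisely the meaning of ``$Kh$ sufficiently small'' in the statement.

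With $a10\neq 0$ fixed, every interior vertex $(i,j)$ is joined by an arc to each axial neighbor that lies in $\{1,\ldots,N-1\}$. To connect an arbitrary pair $(i_1,j_1),(i_2,j_2)$, I would concatenate $|i_2-i_1|$ arcs moving in the $x$-direction followed by $|j_2-j_1|$ arcs moving in the $y$-direction; all intermediate indices stay in $\{1,\ldots,N-1\}$ by construction. The symmetry argument from the first step then promotes this undirected path to a directed walk in both directions, which gives strong connectivity.

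The main (and really only) obstacle is bookkeeping the boundary cases of the block structure (\ref{a4})--(\ref{a6}) to verify that the ``axial edge'' I use at a near-boundary vertex is actually present in the matrix and not suppressed by the range conditions on $i$ and $j$. This is a purely combinatorial check, not an analytic one. I note in passing that $a20$ is not logically required for the argument: the axial arcs alone already span the king's graph's spanning subgraph of four-neighbor adjacencies, so the same proof would work under the weaker hypothesis ``$a10\neq 0$.'' The tighter hypothesis in the lemma will become relevant in the subsequent diagonal-dominance / irreducibility estimates, which is presumably why both coefficients are tracked here.
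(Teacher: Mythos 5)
Your proposal is correct, and it is in fact more complete than the paper's own proof. The paper takes the same first step you do --- observing from (\ref{a3}) that $a00=10/3$, $a10=-2/3$, $a20=-1/6$ in the limit $Kh\to 0$, so every block of $A$ has nonzero sub-, main- and super-diagonals --- but then establishes strong connectivity only by drawing the directed graph for the single case $N=4$ and declaring from the figure that every ordered pair of vertices is joined by a directed path. You replace that appeal to a picture with a general argument: identify the vertex $(j-1)(N-1)+i$ with the grid point $(x_i,y_j)$, note that the block structure (\ref{a4})--(\ref{a6}) makes $A$ symmetric so that every arc comes with its reverse and strong connectivity reduces to undirected connectivity, and then connect any two grid points by an explicit staircase of axial moves, each present because $a10\neq 0$. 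This works for every $N$, and your side remark that $a20$ plays no role here (the four-neighbor arcs already span the grid) is a genuine sharpening: the lemma really only needs $a10\neq 0$ for small $Kh$, with $a20$ becoming relevant only in the sign and row-sum conditions of Lemma \ref{lm:2}. The only point to be careful about, which you flag yourself, is that the range restrictions $i=1(1)N-1$, $i=2(1)N-1$, $i=1(1)N-2$ in (\ref{a4})--(\ref{a6}) exactly exclude the arcs that would leave the grid, so every axial step of your staircase path between interior indices is indeed realized by a nonzero entry of $A$; once that bookkeeping is done the argument is complete.
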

\begin{proof}
	Consider the given matrix $A=[a_{i,j}], i,j=1(1)n$ with its elements given by (\ref{a4})-(\ref{a6}).
	For $Kh\rightarrow 0$, the distinct nonzero values of the elements of the matrix are given by
	a00={10}/{3},~a10=-{2}/{3},~a20=-{1}/{6}.
	Therefore, for sufficiently small $Kh$, all the blocks of the matrix contain nonzero elements in its lower, upper and main diagonal. For constructing the directed graph of the matrix $A=[a_{i,j}], i,j=1(1)n$, consider $n$ distinct points in a plane denoted by $P_1, P_2,..,P_n$. Then corresponding to each nonzero element $a_{i,j}$ of the matrix $A$, connect the point $P_i$ to $P_j$ by a directed arc, directed from $P_i$ to $P_j$. The graph thus obtained is called a directed graph of the matrix $A$ \cite{varga2009matrix,saad2003iterative}. Such a directed graph of a matrix is depicted for $N=4$ in Figure \ref{fig:c1} and its adjacency matrix is given as
	\begin{equation*}\label{adj}
	A=
	\begin{bmatrix}
	d1 & d2 & 0 & d2 & d3 & 0 & 0 & 0 & 0\\
	d2 & d1 & d2 & d3 & d2 & d3 & 0 & 0 & 0\\
	0 & d2 & d1 & 0 & d3 & d2 & 0 & 0 & 0\\
	d2 & d3 & 0 & d1 & d2 & 0 & d2 & d3 & 0\\
	d3 & d2 & d3 & d2 & d1 & d2 & d3 & d2 & d3\\
	0 & d3 & d2 & 0 & d2 & d1 & 0 & d3 & d2\\
	0 & 0 & 0 & d2 & d3 & 0 & d1 & d2 & 0\\
	0 & 0 & 0 & d3 & d2 & d3 & d2 & d1 & d2\\
	0 & 0 & 0 & 0 & d3 & d2 & 0 & d2 & d1
	\end{bmatrix},	
	\end{equation*}
	where $d1=a00, d2=a10, d3=a20$. Hence from the Figure, it is clear that for any ordered pair of points $(P_i,P_j), i,j=1(1)n$, there exist a directed path from $P_i$ to $P_j$ and hence the directed graph of the matrix $A$ is strongly connected \cite{varga2009matrix}, which proves the Lemma.
\end{proof}
We next establish the conditions on row sum of the matrix in the following result.
\begin{lemma}\label{lm:2}
	Let $A=[a_{i,j}], i,j=1(1)n$, where $n=(N-1)^2$, be a matrix with its elements $a_{i,j}$ given by (\ref{a4})-(\ref{a6}). When $Kh$ is sufficiently small, where $K$ is the wave number and $h$ is the grid length, then
	\begin{enumerate}[label=(\alph*)]
		\item $a_{i,j}\leq 0, ~~~i\neq j,~~ i,j=1(1)n$,
		\item $\sum_{j=1}^{n}{a_{i,j}}\geq 0$,~$i=1(1)n$ and
		$\sum_{j=1}^{n}{a_{i,j}}> 0$,~for at least one $i$.
	\end{enumerate}
\end{lemma}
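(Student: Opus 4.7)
The plan is to verify both parts of the lemma by direct algebraic manipulation of the closed-form expressions in (\ref{a3}), viewed as polynomials in the single scaled parameter $s := K^2 h^2$, combined with the row-structure description in (\ref{a4})--(\ref{a6}). Both parts ultimately reduce to elementary sign checks on low-degree polynomials in $s$ near $s = 0$.

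For part (a), the only off-diagonal values that appear anywhere in $A$ are $a10$ and $a20$, so it suffices to check each. For $a20 = -(840 + 28 s + s^2)/5040$, the numerator is a sum of non-negative terms together with the strictly positive constant $840$, so $a20 < 0$ holds for every $s \geq 0$ with no smallness assumption. For $a10 = -(13440 - 224 s - 8 s^2 + s^3)/20160$, the numerator evaluates to $13440 > 0$ at $s = 0$; since a polynomial is continuous, there is a threshold $s_0 > 0$ below which the numerator remains positive, and then $a10 < 0$. Combining the two gives non-positivity of every off-diagonal entry of $A$ for $s < s_0$.

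For part (b), I would enumerate the row types according to the position of the underlying grid node. Dirichlet data are absorbed into the right-hand side, so the off-diagonal entries in a given row correspond only to stencil neighbors that are themselves interior. From (\ref{a4})--(\ref{a6}) this produces exactly three row types, with row sums
\[
S_c = a00 + 2\,a10 + a20,\qquad S_e = a00 + 3\,a10 + 2\,a20,\qquad S_i = a00 + 4\,a10 + 4\,a20,
\]
for corner, edge, and strictly-interior nodes respectively. Substituting (\ref{a3}) and collecting powers of $s$, direct calculation shows that at $s = 0$ these evaluate to the familiar $9$-point Laplace row sums $11/6$, $1$, and $0$. For $S_c$ and $S_e$, continuity then delivers strict positivity on a neighborhood of the origin, which simultaneously supplies the ``at least one $i$'' part of (b). The strictly-interior sum $S_i$ requires a separate analysis since its value at $s = 0$ is exactly zero; here the sign is governed by the leading $s$-term, and a short cancellation yields a compact closed-form expression for $S_i$ in terms of the coefficient $\alpha_3$ of (\ref{eq33}).

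The main obstacle is precisely this interior-row computation. Because $S_i|_{s=0} = 0$, the inequality $S_i \geq 0$ is not a consequence of continuity but depends sensitively on the algebraic structure of the $s^2$, $s^4$, and $s^6$ coefficients of $a00$, $a10$, $a20$. The plan is to assemble $S_i(s)$ in closed form by tracking the cancellations of the constant and higher-order terms across the three contributions, then to read off its leading nonzero coefficient in order to determine the sign regime for sufficiently small $s$. Once this explicit form of $S_i$ is in hand, together with the continuity-based positivity of $S_c$ and $S_e$, the non-negativity of all row sums and strict positivity at boundary-adjacent rows required by (b) will follow.
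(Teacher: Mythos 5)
Your part (a) and your treatment of the corner and edge rows in part (b) are sound and essentially coincide with the paper's proof, which evaluates the coefficients of (\ref{a3}) at $Kh\to 0$ to get $a00=10/3$, $a10=-2/3$, $a20=-1/6$ and then computes $S_c=11/6$, $S_e=1$, $S_i=0$. The gap is precisely at the point you yourself flag as the crux and then defer: the strictly-interior row sum. Writing $s=K^2h^2$ as you do, the cancellation you propose to carry out gives
\begin{equation*}
S_i \;=\; a00+4\,a10+4\,a20 \;=\; -s+\frac{s^2}{12}-\frac{s^3}{360}\;=\;-K^2h^2\Bigl(1-\frac{K^2h^2}{12}+\frac{K^4h^4}{360}\Bigr)\;=\;-K^2h^2\,\alpha_3 ,
\end{equation*}
and since $\alpha_3\to 1$ as $Kh\to 0$, this is strictly \emph{negative} for every small $Kh>0$. (Structurally this is forced: for a fully interior node all nine stencil weights appear in the row, so the row sum is $h^2\mathcal{L}$ applied to the constant function $1$; the operators $\delta_x^2,\delta_y^2,\delta_x^2\delta_y^2$ annihilate constants, leaving exactly $-h^2K^2\alpha_3$.) So the computation you promise "will follow" in fact refutes the inequality $S_i\geq 0$: the step you postpone is not a routine verification but the step that fails, and part (b) cannot be completed along these lines for any $K>0$.

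It is worth noting that the paper's own proof reaches $S_i=0\geq 0$ only by substituting the limiting values of $a00,a10,a20$ at $Kh=0$ \emph{before} forming the row sums, which suppresses the $O(K^2h^2)$ negative contribution rather than bounding it; your more careful expansion exposes that the statement holds only in that literal limit (i.e., the Poisson case $K=0$). A repaired argument would have to either weaken (b) to $\sum_j a_{i,j}\geq -K^2h^2\alpha_3$ and rework the monotonicity conclusion drawn from it in Theorem \ref{th:3}, or abandon the row-sum/M-matrix route altogether, since the Helmholtz operator with $K>0$ genuinely destroys diagonal dominance of the interior rows.
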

\begin{proof}
	Consider the given matrix $A=[a_{i,j}], i,j=1(1)n$ with its elements given by (\ref{a4})-(\ref{a6}),
	Clearly when $Kh\rightarrow 0$, then
	a00={10}/{3}, a20=-{1}/{6}, a10=-{2}/{3}.
	it is also noted that $a00$ gives the values of all diagonal element of the matrix $A$ and $a10, a20$ give the values of the off-diagonal elements of the matrix $A$ which are clearly non-positive. Thus $a_{i,j}\leq 0, ~~~i\neq j,~~ i,j=1(1)n$.
	
	Now let $S_q$ denotes the $q^{th}$ row sum of the matrix $A$. Then, for $Kh\rightarrow 0$,
	\begin{equation}\label{a10}
	\begin{split}
	&S_1=a00+2a10+a20={11}/{6}>0,\\
	&S_q=a00+3a10+2a20=1>0,~ q=2(1)N-2,\\
	&S_{N-1}=a00+2a10+a20={11}/{6}>0,
	\end{split}
	\end{equation}
	\begin{equation}\label{a11}
	\begin{split}
	S_{(r-1)(N-1)+1}=&a00+3a10+2a20=1>0, s=2(1)N-2,\\
	S_{(r-1)(N-1)+q}=&a00+4a10+4a20=0\geq 0,~q=2(1)N-2, s=2(1)N-2,\\
	S_{(r-1)(N-1)+N-1}=&a00+3a10+2a20=1>0,~ s=2(1)N-2,
	\end{split}
	\end{equation}
	\begin{equation}\label{a12}
	\begin{split}
	&S_{(N-2)(N-1)+1}=a00+2a10+a20={11}/{6}>0,\\
	&S_{(N-2)(N-1)+q}=a00+3a10+2a20=1>0, q=2(1)N-2,\\
	&S_{(N-2)(N-1)+N-1}=a00+2a10+a20={11}/{6}>0.
	\end{split}
	\end{equation}
	From the row sum of the matrix given by (\ref{a10})-(\ref{a12}), it is clear that all the row sum are nonnegative and at least one is positive, which completes the proof.
\end{proof}
Next we will show that the difference scheme has unique solution.
\begin{theorem}\label{th:3}
	Consider the difference scheme (\ref{a1}) as
	\begin{equation}\label{a13}
	\mathcal{L}_{i,j}U_{i,j}=F_{i,j},
	\end{equation}
	where the operator $\mathcal{L}$ and $F_{i,j}$ are given in (\ref{a1}). Let $Kh$ is sufficiently small, where $K$ is the wave number, $h$ is the mesh size. Then, there exist a unique solution of the difference scheme (\ref{a13}).
\end{theorem}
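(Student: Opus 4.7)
The plan is to reduce the unique-solvability question to showing that the coefficient matrix $A$ of the linear system $A\mathbf{U}=\mathbf{r}$ is nonsingular, and then to combine Lemmas~\ref{lm:1} and \ref{lm:2} with a classical theorem on irreducibly diagonally dominant matrices to conclude. First I would observe that the scheme (\ref{a13}) is linear in the unknowns $U_{i,j}$, $i,j=1(1)N-1$, so after the ordering used to construct $A$ in (\ref{a4})--(\ref{a6}) it is entirely equivalent to the square system $A\mathbf{U}=\mathbf{r}$ of size $n=(N-1)^{2}$. Existence and uniqueness of $U_{i,j}$ for every right-hand side $\mathbf{r}$ is then equivalent to $\det A\neq 0$.

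Next I would assemble the hypotheses on $A$ that are already in hand. For $Kh$ sufficiently small the diagonal entry $a00\to 10/3>0$, so $a_{ii}>0$ for all $i$. Lemma~\ref{lm:2}(a) gives $a_{ij}\le 0$ for $i\ne j$, and Lemma~\ref{lm:2}(b) gives the row-sum inequalities
\begin{equation*}
a_{ii}+\sum_{j\ne i}a_{ij}\;\ge\;0\quad\text{for all }i,\qquad a_{ii}+\sum_{j\ne i}a_{ij}>0\text{ for at least one }i.
\end{equation*}
Because $a_{ij}\le 0$ off-diagonally, these relations are precisely the weak diagonal dominance condition $|a_{ii}|\ge\sum_{j\ne i}|a_{ij}|$, with strict inequality in at least one row. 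Lemma~\ref{lm:1} supplies the remaining ingredient, namely that the directed graph $\mathcal{G}(A)$ is strongly connected, which is the graph-theoretic statement that $A$ is irreducible.

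At this point I would invoke the standard result (see, e.g., Varga, \emph{Matrix Iterative Analysis}) that any irreducible, weakly diagonally dominant matrix whose dominance is strict in at least one row is nonsingular. Applying this theorem to $A$ yields $\det A\neq 0$, so the system $A\mathbf{U}=\mathbf{r}$ has a unique solution, proving the theorem. I do not anticipate a real obstacle here: the analytic content has been absorbed into the two preceding lemmas, and the remaining step is essentially a citation. The only care needed is to state explicitly that $a_{ii}>0$ (so that the signed row-sum condition from Lemma~\ref{lm:2} really is the usual weak diagonal dominance condition on $|a_{ii}|$) and to note that ``sufficiently small $Kh$'' is used both to keep $a_{ii}>0$ and to guarantee that the off-diagonal entries referenced in Lemma~\ref{lm:1} are genuinely nonzero, so that the arcs in $\mathcal{G}(A)$ used to establish strong connectivity actually exist.
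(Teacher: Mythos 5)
Your proposal is correct and follows essentially the same route as the paper: both arguments feed Lemma~\ref{lm:1} (irreducibility via strong connectivity) and Lemma~\ref{lm:2} (non-positive off-diagonal entries and nonnegative row sums with at least one positive) into a classical matrix theorem to conclude that $A$ is nonsingular. The only cosmetic difference is that the paper packages the final step as ``$A$ is irreducible and monotone, hence $A^{-1}$ exists and $A^{-1}\geq 0$'' (citing Henrici and Meyer), whereas you invoke Varga's irreducible diagonal dominance criterion directly; the hypotheses used are identical and your added remarks about $a_{ii}>0$ and the role of small $Kh$ are sound.
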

\begin{proof}
	Since from the Lemma \ref{lm:1}, the directed graph $\mathcal{G}(A)$ of the matrix $A$ is strongly connected and hence the matrix $A$ is irreducible \cite{varga2009matrix}. Further, from Lemma \ref{lm:2}, all off-diagonal elements of the matrix $A$ are non-positive and also all row sum of the irreducible matrix $A$ are nonnegative with at least one positive row sum which implies that the matrix $A$ is monotone \cite{henrici1962discrete}. Thus the matrix $A$ is irreducible and monotone and hence inverse of the matrix $A$ exist and $A^{-1}\geq 0$ \cite{meyer2000matrix}. Consequently, the difference scheme (\ref{a13}) has unique solution, which proves the Theorem.
\end{proof}
\subsection{Error estimate of the Scheme}
In this section, the error bound for the proposed difference scheme (\ref{eq32}) is established. For this first we will show that the proposed scheme satisfies the discrete maximum principle \cite{Thomas2013}, then we will establish the discrete regularity result \cite{Thomas2013,strikwerda2004finite}. Both the results will be used to prove our main Theorem for getting error bound. Before we present some results, we need to mention some notations for norms which will be used in the later contexts.

Let $u$ be the exact solution to the problem (\ref{eq15})-(\ref{bc1}) and $U_{i,j}, (i,j)\in\Omega_h$ be the solution to the difference scheme (\ref{eq32}), where $\Omega_h$ is the discrete grid on rectangle domain $\Omega$ with its boundary $\partial\Omega_h$. We let define the sup-norms of the solution values
\begin{equation*}\label{c0}
\begin{split}
||U||_{\infty}=\max_{0\leq i,j\leq N}{|U_{i,j}|},~~
||U||_{\infty,\Omega_h^0}=\max_{1\leq i,j\leq N-1}{|U_{i,j}|},~~
||U||_{\infty,\partial\Omega_h}=\max_{\partial\Omega_h}{|U_{i,j}|}.
\end{split}
\end{equation*}
Now it is claimed that the discrete maximum principle is satisfied by the present difference scheme (\ref{eq32}) in the following result.
\begin{lemma}\label{lm:4}
	Let $ U_{i,j}, (i,j)\in\Omega_h$, be a function defined on the grid $\Omega_h$.
	If
	\begin{equation}\label{c1}
	\mathcal{L}_{i,j} U_{i,j}\leq 0, ~~(x_i,y_j)\in\Omega_h^0
	\end{equation}
	$(\mathcal{L}_{i,j} U_{i,j}\geq 0)$ on $\Omega_h^0$, where $\mathcal{L}$ is given in (\ref{a1}) and $\Omega_h^0$ is an interior of the discrete grid $\Omega_h$. Then, for sufficiently small $Kh$, the maximum (minimum) of the solution value $ U_{i,j}$ on $\Omega_h$ lies on the boundary $\partial\Omega_h$ of the grid $\Omega_h$. Where, $K$ is the wave number and $h$ is the grid size.
\end{lemma}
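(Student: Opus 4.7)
The plan is a classical discrete maximum-principle argument by contradiction. Assume $\mathcal{L}_{i,j}U_{i,j}\le 0$ throughout the interior $\Omega_h^0$, and suppose the maximum $M=\max_{(i,j)\in\Omega_h}U_{i,j}$ is attained at some interior node $(i_0,j_0)$ with $M > \max_{\partial\Omega_h}U$. I would first rewrite the inequality at $(i_0,j_0)$ in the nine-point stencil form (\ref{a2}),
\begin{equation*}
a_{00}U_{i_0,j_0} + a_{10}\mathcal{F}U_{i_0,j_0} + a_{20}\mathcal{D}U_{i_0,j_0} \le 0,
\end{equation*}
and then exploit the sign pattern already recorded in Lemma \ref{lm:2}: for sufficiently small $Kh$ one has $a_{00}>0$, $a_{10}\le 0$, $a_{20}\le 0$, with deep-interior row sum $a_{00}+4a_{10}+4a_{20}=0$ in the limit.

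Since $M$ is the global maximum, each of the eight neighbors appearing in $\mathcal{F}U_{i_0,j_0}$ and $\mathcal{D}U_{i_0,j_0}$ is bounded above by $M$, so $\mathcal{F}U_{i_0,j_0}\le 4M$ and $\mathcal{D}U_{i_0,j_0}\le 4M$. Because $a_{10},a_{20}\le 0$, multiplying these estimates by the coefficients reverses the inequality and, combined with the stencil inequality, yields
\begin{equation*}
0 \;\ge\; a_{00}M + a_{10}\mathcal{F}U_{i_0,j_0} + a_{20}\mathcal{D}U_{i_0,j_0} \;\ge\; (a_{00}+4a_{10}+4a_{20})M \;=\; 0.
\end{equation*}
Equality must therefore hold throughout, forcing $\mathcal{F}U_{i_0,j_0}=4M$ and $\mathcal{D}U_{i_0,j_0}=4M$; since every term in these sums is already at most $M$, each of the eight stencil neighbors of $(i_0,j_0)$ must itself equal $M$.

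The remaining step is propagation. Applying the same reasoning at any interior neighbor where the value $M$ is attained, its own eight stencil neighbors must also attain $M$. Iterating along this cascade and invoking Lemma \ref{lm:1}, which asserts that the directed graph $\mathcal{G}(A)$ of the interior stencil matrix is strongly connected, I can reach every interior node; since the near-boundary interior nodes have stencils that include boundary points, the value $M$ propagates to $\partial\Omega_h$ as well. Hence $U\equiv M$ on $\overline{\Omega}_h$, contradicting the assumption $M>\max_{\partial\Omega_h}U$. The minimum assertion follows by applying the same reasoning to $-U$, which satisfies $\mathcal{L}(-U)\ge 0$ whenever $\mathcal{L}U\le 0$ fails in the opposite direction.

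The main obstacle is making the propagation cascade rigorous: one must check that the local conclusion ``all eight stencil neighbors attain $M$'' transfers faithfully along the adjacency of the nine-point patch, and that this adjacency coincides with the adjacency used to form $\mathcal{G}(A)$ in Lemma \ref{lm:1}, so that strong connectedness really reaches every interior and boundary node. The hypothesis that $Kh$ is sufficiently small is essential at two places: it ensures the strict inequalities $a_{10}<0$ and $a_{20}<0$ that allow the sign-flip when neighbors are replaced by $M$, and it yields the limiting row-sum identity $a_{00}+4a_{10}+4a_{20}=0$, which is the sharp equality that forces the cascade to start.
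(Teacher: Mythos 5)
Your proposal is correct and follows essentially the same route as the paper: both arguments hinge on the sign pattern $a_{00}>0$, $a_{10},a_{20}\le 0$ together with the limiting row-sum identity $a_{00}+4a_{10}+4a_{20}=0$, which forces every stencil neighbor of an interior maximizer to attain the same maximal value. The only difference is cosmetic — the paper fixes one neighbor at a time to deduce equality, while you extract all eight equalities at once from the row sum and then make the propagation to the boundary explicit (a step the paper merely asserts).
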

\begin{proof}
	For the sake of convenience, we will show that the solution value $ U_{i,j}$ cannot have a local maximum in the interior of the domain.
	
	Let $G=\{i-1,i,i+1\}\times\{j-1,j,j+1\}$, $\mathcal{L}_{i,j} U_{i,j}\leq 0$, from (\ref{a2}) we have
	\begin{equation}\label{c3}
	\begin{split}
	a_0^{(1)} U_{i,j}\leq& a_1^{(1)}\mathcal{F}U_{i,j}+a_2^{(1)}\mathcal{D}U_{i,j},
	\end{split}
	\end{equation}
	where, for $Kh\rightarrow 0$, $a_0^{(1)}=a_{00}={10}/{3},~a_1^{(1)}=-a_{10}={2}/{3},~a_2^{(1)}=-a_{20}={1}/{6}$. The operator $\mathcal{F}$ and $\mathcal{D}$ are same as in (\ref{eq17}).
	Now let us assume that the solution value $ U_{i,j}$ is a local maximum, which implies that
	\begin{equation}\label{c4}
	U_{i,j}\geq  U_{l,m}, \forall (l,m)\in G\setminus\{(i,j)\}.
	\end{equation}
	Keeping $ U_{i+1,j}$ fixed, using (\ref{c4}) into (\ref{c3}) we have
	\begin{equation}\label{c5}
	a_0^{(1)} U_{i,j}\leq a_1^{(1)}( U_{i+1,j}+3 U_{i,j})+4a_2^{(1)} U_{i,j}.
	\end{equation}
	Now using $ U_{i,j}\geq  U_{i+1,j}$ into (\ref{c5}), equation (\ref{c5}) becomes
	\begin{equation}\label{c6}
	a_0^{(1)} U_{i,j}\leq a_1^{(1)}( U_{i+1,j}+3 U_{i,j})+4a_2^{(1)} U_{i,j}\leq (4a_1^{(1)}+4a_2^{(1)}) U_{i,j}.
	\end{equation}
	It is also clear that $4a_1^{(1)}+4a_2^{(1)}=a_0^{(1)}$, hence inequality (\ref{c6}) will become
	\begin{equation}\label{c7}
	a_0^{(1)} U_{i,j}= a_1^{(1)} U_{i+1,j}+(3a_1^{(1)}+4a_2^{(1)}) U_{i,j},
	\end{equation}
	which further implies that $ U_{i,j}= U_{i+1,j}$. In this same manner, one can also prove that
	\begin{equation}\label{c8}
	U_{i,j}= U_{i-1,j}= U_{i,j+1}= U_{i,j-1}.
	\end{equation}
	Now keeping $ U_{i+1,j+1}$ fixed, again using assumptions (\ref{c4}), into (\ref{c3}), and proceeding as above, we have
	\begin{equation}\label{c11}
	U_{i,j}= U_{i-1,j}= U_{i,j+1}= U_{i,j-1}= U_{i+1,j-1}= U_{i-1,j+1}= U_{i-1,j-1}.
	\end{equation}
	Hence, $ U_{i,j}$ cannot have a local maximum in the interior of the domain $\Omega$. Therefore, the maximum of the solution value on $\Omega_h$ must lies on the boundary $\partial\Omega$ of the domain. Following the above procedure, One can also prove that the minimum of the solution value on $\Omega_h$ must lies on the boundary $\partial\Omega$ of the domain in the case of $\mathcal{L}_{i,j} U_{i,j}\geq 0$. The proof is complete.
\end{proof}
We next have the following result for the present scheme.
\begin{lemma}\label{lm:5}
	Let $\Omega_h$ be a rectangular grid with its interior $\Omega_h^0$ and boundary $\partial\Omega_h$. $U_{i,j}, (i,j)\in\Omega_h$, be a grid function defined on the grid $\Omega_h$, with $ U_{i,j}=0, (i,j)\in\partial\Omega_h$. Then for sufficiently small $Kh$,
	\begin{equation}\label{c12}
	||\textbf{U}||_{\infty}\leq\frac{1}{8}||\mathcal{L}_{i,j} U_{i,j}||_{\infty,\Omega_h^0},
	\end{equation}
	where $\mathcal{L}$ is given in (\ref{a1}) and $||\cdot||_{\infty,\Omega_h^0}$ be the maximum norm defined on the interior of the grid $\Omega_h$.
\end{lemma}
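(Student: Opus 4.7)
The plan is to apply the standard barrier/comparison-function technique in tandem with the discrete maximum principle established in Lemma~\ref{lm:4}. Concretely, I would search for a non-negative grid function $\phi_{i,j}$ with $\max\phi \le 1/8$ satisfying $\mathcal{L}\phi \geq 1$ on $\Omega_h^0$; comparing $\pm U$ with $\|\mathcal{L}U\|_{\infty,\Omega_h^0}\cdot\phi$ then yields the stated estimate in one step.

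First, I would choose the one-dimensional barrier
$$\phi_{i,j} \;=\; \tfrac{1}{2}\,(x_i-a)(b-x_i),$$
(with the domain normalized so that $b-a=1$, making $\|\phi\|_\infty = 1/8$). Because $\phi$ is a pure quadratic in $x$ and independent of $y$, one gets the \emph{exact} identities $\delta_x^2\phi = -1$, $\delta_y^2\phi = 0$, and $\delta_x^2\delta_y^2\phi = 0$. Consequently,
$$\mathcal{L}\phi \;=\; \alpha_1 \,-\, K^2\alpha_3\,\phi.$$
From the definitions in (\ref{eq33}) we have $\alpha_1\to 1$ and $\alpha_3\to 1$ as $Kh\to 0$, so using $\phi\le 1/8$ one can arrange $\mathcal{L}\phi \geq 1$ for sufficiently small $Kh$.

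Next, set $M := \|\mathcal{L}U\|_{\infty,\Omega_h^0}$ and form the auxiliary grid functions $W^{\pm}_{i,j} := \pm U_{i,j} - M\,\phi_{i,j}$. By linearity and the previous step,
$$\mathcal{L}W^{\pm} \;=\; \pm\mathcal{L}U \,-\, M\,\mathcal{L}\phi \;\leq\; M - M \;=\; 0 \qquad \text{on } \Omega_h^0.$$
Since $U=0$ on $\partial\Omega_h$ and $\phi\ge 0$, we also have $W^{\pm} \le 0$ on $\partial\Omega_h$. Invoking Lemma~\ref{lm:4}, each $W^{\pm}$ attains its maximum on the boundary, so $W^{\pm}\le 0$ throughout $\Omega_h$. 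This gives $|U_{i,j}| \le M\phi_{i,j} \le M/8$, which is exactly (\ref{c12}).

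The principal obstacle I anticipate is the verification of $\mathcal{L}\phi \geq 1$ in the face of the indefinite zero-order term $-K^2\alpha_3$, which can depress $\mathcal{L}\phi$ by as much as $K^2/8$ at the interior maximum of $\phi$. Making this precise requires quantifying ``sufficiently small $Kh$'' carefully: using $\alpha_1 = 1 + O((Kh)^6)$ and $\alpha_3 = 1+O((Kh)^2)$, one needs the threshold on $Kh$ to be chosen so that $\alpha_1 - K^2\alpha_3/8$ remains bounded below by $1$ (or by $1+O(Kh)$, with the correction absorbed). Once this bookkeeping is done, the barrier argument above closes the proof cleanly.
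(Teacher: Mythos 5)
Your overall strategy---a comparison function combined with the discrete maximum principle of Lemma \ref{lm:4}---is exactly the paper's, but your choice of barrier does not work, and the obstacle you flag at the end is fatal rather than a matter of bookkeeping. With $\phi_{i,j}=\tfrac12(x_i-a)(b-x_i)$ you correctly get $\mathcal{L}\phi=\alpha_1-K^2\alpha_3\phi$, and you need $\mathcal{L}\phi\ge 1$, i.e.\ $K^2\alpha_3\phi\le \alpha_1-1=K^6h^6/20160$. At a grid point near the centre of the interval $\phi\approx 1/8$, so the left side is $\approx K^2/8$, which is independent of $h$, while the right side tends to $0$ as $Kh\to 0$. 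Hence $\mathcal{L}\phi\ge 1$ fails for every fixed $K>0$ no matter how small $h$ is; ``sufficiently small $Kh$'' here means $h\to 0$ with $K$ fixed (the paper uses the lemma with $K$ of order $10^2$--$10^3$), not $K\to 0$. Even if you rescale the barrier to $\phi=\tfrac{c}{2}x(1-x)$, the requirement $c\ge 1+K^2c/8$ is solvable only for $K^2<8$ and then yields the constant $1/(8-K^2)$ rather than $1/8$. The root of the problem is that your barrier is concave, so the discrete Laplacian term and the zero-order term $-K^2\alpha_3\phi$ enter $\mathcal{L}\phi$ with opposite signs and the zero-order term loses.

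The paper avoids this by taking a \emph{convex} non-negative comparison function, $v_{i,j}=\bigl[(x_i-\tfrac12)^2+(y_j-\tfrac12)^2\bigr]/4$, for which $\delta_x^2v=\delta_y^2v=\tfrac12$ and $\delta_x^2\delta_y^2v=0$ exactly, so that $\mathcal{L}v=-\alpha_1-K^2\alpha_3 v=-C$ with $C\ge\alpha_1\ge 1$ for all $K$ (both terms now have the favorable sign). It then shows $\mathcal{L}(U+\lVert F\rVert_{\infty,\Omega_h^0}v)\le 0$ and $\mathcal{L}(U-\lVert F\rVert_{\infty,\Omega_h^0}v)\ge 0$, applies Lemma \ref{lm:4}, and uses $U=0$ on $\partial\Omega_h$ together with the fact that $v$ attains its maximum $1/8$ on the boundary (at the corners) to conclude. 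If you replace your $\phi$ by this $v$ and run your two-sided comparison with $U\pm\lVert F\rVert v$ accordingly, your argument closes; as written, the step $\mathcal{L}W^{\pm}\le 0$ cannot be justified.
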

\begin{proof}
	Consider a grid function $F_{i,j}$ defined on the interior of the discrete grid $\Omega_h$ by
	\begin{equation}\label{c13}
	F_{i,j}=\mathcal{L}_{i,j}U_{i,j}, (i,j)\in\Omega_h^0.
	\end{equation}
	By the definition of maximum norm, it is obvious that
	\begin{equation}\label{c14}
	|F_{i,j}|\leq ||\textbf{F}||_{\infty,\Omega_h^0}, \forall (i,j)\in \Omega_h^0.
	\end{equation}
	Then from (\ref{c13})-(\ref{c14}), we get the inequalities
	\begin{equation}\label{c15}
	\mathcal{L}_{i,j}U_{i,j}+||\textbf{F}||_{\infty,\Omega_h^0} \geq 0, ~~~~ \mathcal{L}_{i,j}U_{i,j}-||\textbf{F}||_{\infty,\Omega_h^0} \leq 0.
	\end{equation}
	We next define
	$v_{i,j}=[\left(x_i-{1}/{2}\right)^2+\left(y_j-{1}/{2}\right)^2]/4$,
	then for $Kh\rightarrow 0$, we have
	\begin{equation}\label{c17}
	\begin{split}
	\mathcal{L}_{i,j}v_{i,j}&=-[1+K^2\{\left(x_i-{1}/{2}\right)^2+\left(y_j-{1}/{2}\right)^2\}/4]=-C,
	\end{split}
	\end{equation}
	where $C=[1+K^2\{\left(x_i-{1}/{2}\right)^2+\left(y_j-{1}/{2}\right)^2\}/4]>1$. Then, we have
	\begin{equation*}\label{c18}
	\mathcal{L}_{i,j}( U_{i,j}-||\textbf{F}||_{\infty,\Omega_h^0}v_{i,j})=\mathcal{L}_{i,j} U_{i,j}-||\textbf{F}||_{\infty,\Omega_h^0}\mathcal{L}_{i,j}v_{i,j}=\mathcal{L}_{i,j} U_{i,j}+C||\textbf{F}||_{\infty,\Omega_h^0}.
	\end{equation*}
	Now since $C>1$, Thus from first inequalities in (\ref{c15}), we have
	\begin{equation}\label{c19}
	\mathcal{L}_{i,j}( U_{i,j}-||\textbf{F}||_{\infty,\Omega_h^0}v_{i,j})\geq\mathcal{L}_{i,j} U_{i,j}+||\textbf{F}||_{\infty,\Omega_h^0}\geq 0.
	\end{equation}
	In the same manner, using second inequalities in (\ref{c15}), we can have
	\begin{equation}\label{c20}
	\mathcal{L}_{i,j}( U_{i,j}+||\textbf{F}||_{\infty,\Omega_h^0}v_{i,j})
	\leq 0.
	\end{equation}
	Thus from Lemma \ref{lm:4}, the maximum of $\textbf{U}+||\textbf{F}||_{\infty,\Omega_h^0}\textbf{v}$ and the minimum of $\textbf{U}-||\textbf{F}||_{\infty,\Omega_h^0}\textbf{v}$ must occur on the boundary $\partial\Omega_h$. Then, using $U_{i,j}=0$ on $\partial\Omega_h$, we get
	\begin{equation}\label{c21}
	\begin{split}
	\max_{\partial\Omega_h}[\textbf{U}+||\textbf{F}||_{\infty,\Omega_h^0}\textbf{v}]=\max_{\partial\Omega_h}\textbf{U}+||\textbf{F}||_{\infty,\Omega_h^0}||\textbf{v}||_{\infty,\partial\Omega_h}=||\textbf{F}||_{\infty,\Omega_h^0}||\textbf{v}||_{\infty,\partial\Omega_h},
	\end{split}
	\end{equation}
	\begin{equation}\label{c22}
	\begin{split}
	\max_{\partial\Omega_h}[\textbf{U}+||\textbf{F}||_{\infty,\Omega_h^0}\textbf{v}]\geq&  U_{i,j}+||\textbf{F}||_{\infty,\Omega_h^0}v_{i,j}
	\geq  U_{i,j}. ~~~~(\because ||\textbf{F}||_{\infty,\Omega_h^0}v_{i,j}\geq 0)
	\end{split}
	\end{equation}
	Hence, from (\ref{c21})-(\ref{c22}),
	\begin{equation}\label{c23}
	||\textbf{F}||_{\infty,\Omega_h^0}||\textbf{v}||_{\infty,\partial\Omega_h}\geq  U_{i,j}, \forall (i,j)\in\Omega_h.
	\end{equation}
	Proceeding as above for the minimum attained on the boundary, one can prove
	\begin{equation}\label{c24}
	-||\textbf{F}||_{\infty,\Omega_h^0}||\textbf{v}||_{\infty,\partial\Omega_h}\leq U_{i,j}, \forall (i,j)\in\Omega_h.
	\end{equation}
	From (\ref{c23})-(\ref{c24}), we have
	\begin{equation}\label{c25}
	-||\textbf{F}||_{\infty,\Omega_h^0}||\textbf{v}||_{\infty,\partial\Omega_h}\leq U_{i,j}\leq ||\textbf{F}||_{\infty,\Omega_h^0}||\textbf{v}||_{\infty,\partial\Omega_h}, \forall (i,j)\in\Omega_h.
	\end{equation}
	Since $||\textbf{v}||_{\infty,\partial\Omega_h}={1}/{8}$, thus from (\ref{c25}), we get the required result
	\begin{equation}\label{c26}
	||\textbf{U}||_{\infty}\leq\frac{1}{8}||\mathcal{L}_{i,j}U_{i,j}||_{\infty,\Omega_h^0},
	\end{equation}
	which proves the Lemma.
\end{proof}
Now we consider error estimate for scheme (\ref{eq32}) in the following theorem.\\
For this, let we denote
\begin{equation*}
||\partial^pu||_{\infty,\Omega_h^0}=\sup\left\{\left\rvert\frac{\partial^pu}{\partial x^{r_1}\partial y^{r_2}}(x,y)\right\rvert: (x,y)\in\Omega_h^0, r_1+r_2=p, r_1,r_2=0(1)p\right\},
\end{equation*}
which will be used in the later context.
\begin{theorem}\label{th:6}
	Let $\overline{\Omega}=\Omega\cup\partial\Omega$, $f\in C^{6}(\overline{\Omega})$ and $u\in C^{8}(\overline{\Omega})$ be a solution to the problem (\ref{eq15})-(\ref{bc1}) and $U_{i,j}$ be the solution  to the present difference scheme (\ref{eq32}). Then for sufficiently small $Kh$, there exist the following error estimate for the present scheme
	\begin{equation}\label{c27}
	||\textbf{u}-\textbf{U}||_{\infty}\leq C_1^{(1)}h^6+C_2^{(1)}h^6||\partial^8u||_{\infty,\Omega_h^0},
	\end{equation}
	for some constants $C_1^{(1)}, C_2^{(1)}$, where $K$ is the wave number, $h$ is the grid size.
\end{theorem}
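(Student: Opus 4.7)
The plan is to combine the machinery already assembled in Section \ref{sec:3}: Lemma \ref{lm:5} gives a uniform a priori estimate, and equation (\ref{eq34}) identifies the leading truncation error, so the theorem will follow by applying the former to the latter.

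First I would introduce the pointwise error $e_{i,j} = u_{i,j} - U_{i,j}$ on $\Omega_h$. Since $u$ satisfies the Dirichlet data $\Phi$ on $\partial\Omega$ and $U_{i,j}$ is set to match $\Phi$ on $\partial\Omega_h$, we have $e_{i,j} = 0$ on $\partial\Omega_h$, which is exactly the hypothesis required by Lemma \ref{lm:5}. Then I would substitute the exact solution $u_{i,j}$ into the scheme (\ref{eq32}); by the derivation in Section \ref{sec:2}, this produces the identity $\mathcal{L}_{i,j} u_{i,j} = F_{i,j} - T_2^{(2)}$ (up to the sign convention fixed in (\ref{a1})), while $\mathcal{L}_{i,j} U_{i,j} = F_{i,j}$ by definition of the numerical solution. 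Subtracting yields the error equation
\begin{equation*}
\mathcal{L}_{i,j} e_{i,j} = -T_2^{(2)}, \qquad (i,j)\in\Omega_h^0.
\end{equation*}

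Next, for $Kh$ sufficiently small, Lemma \ref{lm:5} applied to the grid function $e_{i,j}$ (which vanishes on $\partial\Omega_h$) gives
\begin{equation*}
\|\textbf{u}-\textbf{U}\|_{\infty} \;\le\; \tfrac{1}{8}\,\|\mathcal{L}_{i,j} e_{i,j}\|_{\infty,\Omega_h^0} \;=\; \tfrac{1}{8}\,\|T_2^{(2)}\|_{\infty,\Omega_h^0}.
\end{equation*}
It then remains to estimate $\|T_2^{(2)}\|_{\infty,\Omega_h^0}$ using the explicit expression (\ref{eq34}). The three $f$-dependent terms involve only sixth order partial derivatives of $f$, which are bounded by a constant $M_f = \|\partial^6 f\|_{\infty,\overline{\Omega}}$ under the hypothesis $f\in C^6(\overline{\Omega})$; their contribution is therefore of the form $C_1^{(1)} h^6$ with $C_1^{(1)}$ a fixed multiple of $M_f$ (gathering the coefficients $11/60480$ and $31/60480$ and the factor $1/8$). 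The remaining term $\tfrac{h^6}{3024}\,\partial_x^4\partial_y^4 u$ is an eighth order mixed derivative of $u$, so $|\partial_x^4\partial_y^4 u| \le \|\partial^8 u\|_{\infty,\Omega_h^0}$, and its contribution is $C_2^{(1)} h^6 \|\partial^8 u\|_{\infty,\Omega_h^0}$ with $C_2^{(1)} = 1/(8\cdot 3024)$. Adding the two contributions gives the claimed bound (\ref{c27}).

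The main obstacle is essentially conceptual rather than computational: one must verify that the truncation error identity $\mathcal{L} u = F - T_2^{(2)}$ (with the $\mathcal{L}$ and $F$ defined in (\ref{a1})) is exactly what falls out of rearranging the sixth order consistency derivation that produced (\ref{eq32})--(\ref{eq34}), since the coefficients $\alpha_i,\beta_i$ depend on $K$ and $h$ in a nontrivial way. Once that bookkeeping is settled, the remainder of the argument is a direct application of Lemma \ref{lm:5} together with the smoothness assumptions $u\in C^8(\overline{\Omega})$ and $f\in C^6(\overline{\Omega})$, which are precisely what is needed for the two types of terms in $T_2^{(2)}$ to be bounded.
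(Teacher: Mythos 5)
Your proposal is correct and follows essentially the same route as the paper: form the error equation $\mathcal{L}_{i,j}(u_{i,j}-U_{i,j})=\pm T_2^{(2)}$, bound the truncation error (\ref{eq34}) by $Ch^6 + Dh^6\|\partial^8u\|_{\infty,\Omega_h^0}$ using the smoothness of $f$ and $u$, and apply Lemma \ref{lm:5} to the error (which vanishes on $\partial\Omega_h$). Your constants even match the paper's after gathering the $f$-coefficients ($2\cdot 11/60480 + 2\cdot 31/60480 = 1/720$ and $D=1/3024$), so nothing further is needed.
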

\begin{proof}
	Consider the difference scheme (\ref{eq32}) as
	\begin{equation}\label{c28}
	\mathcal{L}_{i,j}U_{i,j}=F_{i,j},
	\end{equation}
	with the corresponding local truncation error $T_{i,j}$ given by (\ref{eq34}).
	
	Next, the local truncation error is the amount by which the solution value $u$ to the problem (\ref{eq15})-(\ref{bc1}) does not satisfy the difference scheme (\ref{c28}). Hence substituting the solution value $u$ in the difference scheme (\ref{c28}), we get
	\begin{equation}\label{c30}
	\mathcal{L}_{i,j}u_{i,j}=F_{i,j}+T_{i,j}.
	\end{equation}
	From (\ref{c28}) and (\ref{c30}), we have
	\begin{equation}\label{c31}
	\mathcal{L}_{i,j}(u_{i,j}-U_{i,j})=T_{i,j},
	\end{equation}
	or
	\begin{equation}\label{c32}
	||\mathcal{L}_{i,j}(u_{i,j}-U_{i,j})||_{\infty,\Omega_h^0}\leq Ch^6+Dh^6||\partial^8u||_{\infty,\Omega_h^0}.
	\end{equation}
	Both the constants $C, D$ are explicitly independent of the wave number $K$ and the solution value $u$ and given by
	$C=||\partial^6f||_{\infty,\Omega_h^0}/720, ~D={1}/{3024}$. 
	It is also noted that $u_{i,j}-U_{i,j}=0, (i,j)\in\partial\Omega_h$. Therefore from Lemma \ref{lm:5}, we have that
	\begin{equation}\label{c34}
	||\textbf{u}-\textbf{U}||_{\infty}\leq\frac18||\mathcal{L}_{i,j}(u_{i,j}-U_{i,j})||_{\infty,\Omega_h^0}.
	\end{equation}
	Thus from (\ref{c32}) and (\ref{c34}), we get the required error estimate
	\begin{equation}\label{c35}
	||\textbf{u}-\textbf{U}||_{\infty}\leq C_1^{(1)}h^6+C_2^{(1)}h^6||\partial^8u||_{\infty,\Omega_h^0},
	\end{equation}
	for some constants $C_1^{(1)}=C/8, ~~C_2^{(1)}=D/8$, which completes the proof.
\end{proof}
Next we consider error estimate for scheme (\ref{eq26}) in the following Theorem.
\begin{theorem}\label{th:7}
	Let $\overline{\Omega}=\Omega\cup\partial\Omega$, $f\in C^{6}(\overline{\Omega})$ and $u\in C^{8}(\overline{\Omega})$ be a solution to the problem (\ref{eq15})-(\ref{bc1}) and $ U_{i,j}$ be the solution value to the standard sixth order scheme (\ref{eq26}). Then for sufficiently small $Kh$,
	\begin{equation}\label{c36}
	||\textbf{u}-\textbf{U}||_{\infty}\leq C_3^{(1)}h^6+C_4^{(1)}\left(K^2||\partial^6u||_{\infty,\Omega_h^0}+||\partial^{8}u||_{\infty,\Omega_h^0}\right)h^6,
	\end{equation}
	where $K$ is the wave number and $h$ is the grid size.
\end{theorem}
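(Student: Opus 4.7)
The proof strategy directly parallels Theorem~\ref{th:6}: I would recast the standard scheme (\ref{eq26}) in operator form, verify that the infrastructure of Lemmas~\ref{lm:1}--\ref{lm:5} still applies, and then bound the truncation error $T_1^{(2)}$ given by (\ref{eq27}). The $K^2\|\partial^6u\|_{\infty,\Omega_h^0}h^6$ term that appears in (\ref{c36}) but is absent from (\ref{c27}) will emerge directly from the $K^2h^6/2160$ coefficient multiplying the $\partial_x^4\partial_y^2+\partial_x^2\partial_y^4$ term in (\ref{eq27}); this is the single structural difference between the error bounds for the two schemes.

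First, I would rewrite (\ref{eq26}) as $\mathcal{L}_{i,j}U_{i,j}=F_{i,j}$ with $\mathcal{L}=-(\delta_x^2+\delta_y^2)-\tfrac{h^2}{6}\bigl(1+\tfrac{K^2h^2}{30}\bigr)\delta_x^2\delta_y^2-K^2\bigl(1-\tfrac{K^2h^2}{12}(1-\tfrac{K^2h^2}{30})\bigr)$ and $F_{i,j}$ the corresponding discrete right-hand side. Multiplying by $h^2$ reduces this to a stencil identity $a00\,U_{i,j}+a10\,\mathcal{F}U_{i,j}+a20\,\mathcal{D}U_{i,j}=h^2F_{i,j}$ of the form (\ref{a2}). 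A short computation shows that as $Kh\to 0$ the coefficients tend to the same limiting values $a00=10/3$, $a10=-2/3$, $a20=-1/6$ already used in the proof of Theorem~\ref{th:6}. Consequently, for $Kh$ small enough that the off-diagonal entries remain nonpositive and the row sums nonnegative, Lemmas~\ref{lm:1} and~\ref{lm:2} go through unchanged, and hence the discrete maximum principle (Lemma~\ref{lm:4}) and the regularity bound (Lemma~\ref{lm:5}) apply verbatim to the operator of (\ref{eq26}).

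Next, substituting the exact solution $u$ into the standard scheme gives $\mathcal{L}_{i,j}u_{i,j}=F_{i,j}+T_{i,j}$ with $T_{i,j}=T_1^{(2)}|_{(i,j)}$ from (\ref{eq27}); subtracting the discrete equation $\mathcal{L}_{i,j}U_{i,j}=F_{i,j}$ yields $\mathcal{L}_{i,j}(u_{i,j}-U_{i,j})=T_{i,j}$ with $u-U=0$ on $\partial\Omega_h$. Bounding (\ref{eq27}) term by term: the sixth-order-in-$f$ contributions are majorized by a constant multiple of $h^6\|\partial^6 f\|_{\infty,\Omega_h^0}$; the distinguished $K^2h^6$-term is bounded by a constant times $K^2h^6\|\partial^6 u\|_{\infty,\Omega_h^0}$; and the remaining three terms $(\partial_x^8+\partial_y^8)u$, $(\partial_x^6\partial_y^2+\partial_x^2\partial_y^6)u$, $\partial_x^4\partial_y^4 u$ all have total order eight and are controlled by $h^6\|\partial^8 u\|_{\infty,\Omega_h^0}$. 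Summing these estimates gives $\|T\|_{\infty,\Omega_h^0}\leq C h^6+C'\bigl(K^2\|\partial^6u\|_{\infty,\Omega_h^0}+\|\partial^8u\|_{\infty,\Omega_h^0}\bigr)h^6$ for explicit constants $C,C'$ independent of $K$ and $h$. Applying Lemma~\ref{lm:5} to $u-U$ then gives $\|\mathbf{u}-\mathbf{U}\|_\infty\leq\tfrac18\|T\|_{\infty,\Omega_h^0}$, which combined with the previous bound delivers (\ref{c36}) with $C_3^{(1)}=C/8$, $C_4^{(1)}=C'/8$.

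The only genuine obstacle is verifying that the coefficient matrix of the standard scheme (\ref{eq26}) inherits irreducibility, nonpositive off-diagonal entries, and nonnegative row sums for small but nonzero $Kh$. Since the $Kh\to 0$ coefficients agree with those of the new scheme treated in Lemmas~\ref{lm:1}--\ref{lm:2}, a continuity/perturbation argument suffices: the strict inequalities in those lemmas persist under small $Kh$, and the equalities (e.g.\ zero row sums at interior rows) remain in the weak form ``$\geq 0$'' required by the monotonicity argument of Theorem~\ref{th:3}. Once this routine check is in place, the rest of the proof is a direct transcription of the argument already carried out for Theorem~\ref{th:6}.
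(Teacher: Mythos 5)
Your proposal follows essentially the same route as the paper: write the error equation $\mathcal{L}_{i,j}(u_{i,j}-U_{i,j})=T_{i,j}$ for the standard scheme, bound the truncation error (\ref{eq27}) term by term (which is where the $K^2\|\partial^6u\|_{\infty,\Omega_h^0}$ contribution arises), and apply the discrete regularity bound of Lemma \ref{lm:5} to pick up the factor $1/8$. You are in fact slightly more careful than the paper, which invokes Lemma \ref{lm:5} without explicitly re-verifying that the maximum-principle infrastructure holds for the standard scheme's operator; your observation that its stencil coefficients tend to the same limits $10/3,\,-2/3,\,-1/6$ as $Kh\to 0$ supplies that missing check.
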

\begin{proof}
	Consider the scheme (\ref{eq26}) with its corresponding local truncation error (\ref{eq27})
	and then proceeding in a similar way as for Theorem \ref{th:6}, it follows that
	\begin{equation}\label{c38}
	\begin{split}
	&||\mathcal{L}_{i,j}(u_{i,j}-U_{i,j})||_{\infty,\Omega_h^0}\\
	&\leq\left(\beta_1^{(2)}||\partial^6f||_{\infty,\Omega_h^0}+\beta_2^{(2)}K^2||\partial^6u||_{\infty,\Omega_h^0}+\beta_3^{(2)}||\partial^{8}u||_{\infty,\Omega_h^0}\right)h^6,
	\end{split}
	\end{equation}
	where $\beta_1^{(2)}=1/432, \beta_2^{(2)}=1/1080, \beta_3^{(2)}=11/5040$.
	Then, from Lemma \ref{lm:5},
	\begin{equation}\label{c39}
	||\textbf{u}-\textbf{U}||_{\infty}\leq C_3^{(1)}h^6+C_4^{(1)}\left(K^2||\partial^6u||_{\infty,\Omega_h^0}+||\partial^{8}u||_{\infty,\Omega_h^0}\right)h^6,
	\end{equation}
	where $C_3^{(1)}=||\partial^6f||_{\infty,\Omega_h^0}/3456,~ C_4^{(1)}={11}/{40320}$, which completes the proof.
\end{proof} 
Thus, Theorems \ref{th:6}-\ref{th:7} show that the bound of the error norm is explicitly independent of the wave number $K$ for our present scheme (\ref{eq32}). Hence, we expect that the present scheme will provide better results for very large wave number $K$. 

Now we conclude the convergence analysis of the present difference scheme (\ref{eq32}) in the following Theorem.
\begin{theorem}\label{th:8}
	Let $\overline{\Omega}=\Omega\cup\partial\Omega$ be a rectangular domain, with its boundary $\partial\Omega$. Let $f\in C^{6}(\overline{\Omega})$ and $u\in C^{8}(\overline{\Omega})$ be a solution to the problem (\ref{eq15})-(\ref{bc1}) and $ U_{i,j}$ be the solution value to the present scheme (\ref{eq32}). Then for sufficiently small $Kh$, the present scheme provides sixth order accuracy. Further, as mesh size $h\rightarrow 0$ such that $Kh$ is sufficiently small, the present scheme converges to the solution of the corresponding boundary value problem.
\end{theorem}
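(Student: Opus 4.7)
The plan is to assemble Theorem \ref{th:8} directly from the machinery already established earlier in the section, since the hard analytic work (solvability, discrete maximum principle, discrete regularity, and the truncation-error estimate) has been done in Theorems \ref{th:3} and \ref{th:6} and Lemmas \ref{lm:4} and \ref{lm:5}. The theorem itself is essentially the convergence statement that follows from an error bound of the form $O(h^6)$ together with uniqueness of the discrete solution.

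First, I would invoke Theorem \ref{th:3} to guarantee that, for sufficiently small $Kh$, the scheme (\ref{eq32}) admits a unique discrete solution $U_{i,j}$, so the quantity $\|\mathbf{u}-\mathbf{U}\|_\infty$ is well defined. Next, I would apply Theorem \ref{th:6} to obtain
\begin{equation*}
\|\mathbf{u}-\mathbf{U}\|_\infty \leq C_1^{(1)} h^6 + C_2^{(1)} h^6 \,\|\partial^8 u\|_{\infty,\Omega_h^0},
\end{equation*}
with constants $C_1^{(1)}, C_2^{(1)}$ that are explicitly independent of the wave number $K$. The hypotheses $f\in C^6(\overline{\Omega})$ and $u\in C^8(\overline{\Omega})$ on the compact closure $\overline{\Omega}$ imply that $\|\partial^8 u\|_{\infty,\Omega_h^0}$ is uniformly bounded in $h$ by $\|\partial^8 u\|_{\infty,\overline{\Omega}}$, so the whole right-hand side is $O(h^6)$. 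This gives the claimed sixth-order accuracy.

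For the convergence statement, I would simply let $h\to 0$ while keeping $Kh$ in the regime in which Theorems \ref{th:3} and \ref{th:6} apply; the $O(h^6)$ bound above then forces $\|\mathbf{u}-\mathbf{U}\|_\infty \to 0$, which is precisely convergence of the discrete solution to the solution of (\ref{eq15})--(\ref{bc1}) in the maximum norm on $\Omega_h$.

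There is really no analytic obstacle left at this stage, because every nontrivial ingredient has already been proved: solvability from the irreducibility and monotonicity of $A$, stability from Lemma \ref{lm:5}, and consistency of order six from the truncation error $T_2^{(2)}$ in (\ref{eq34}). The only thing I would be careful about is making the dependence on $K$ transparent when I write the final inequality, emphasizing that the constants $C_1^{(1)}, C_2^{(1)}$ and the bound on $\|\partial^8 u\|_{\infty,\overline{\Omega}}$ do not carry a factor of $K$, so that the estimate remains meaningful for large $K$ provided only that $Kh$ stays small, which is the punchline that motivates the whole construction.
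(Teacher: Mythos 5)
Your proposal is correct and follows exactly the route the paper intends: the paper in fact states Theorem \ref{th:8} without an explicit proof, treating it as an immediate consequence of the unique solvability result (Theorem \ref{th:3}) and the error estimate (Theorem \ref{th:6}), which is precisely the assembly you carry out. Your added observation that $u\in C^{8}(\overline{\Omega})$ bounds $\|\partial^8 u\|_{\infty,\Omega_h^0}$ uniformly in $h$ is the one small step the paper leaves implicit, and it is handled correctly.
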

\section{Results and Discussion}
\label{sec:4}
In this section, we test the accuracy of the present difference schemes (\ref{eq32}) and (\ref{eq3D28}) over the standard sixth-order schemes \cite{nabavi2007new,sutmann2007compact} in two and three dimensions respectively. For the purpose, several model problems governed by the Helmholtz equations in two and three dimensions are presented. All computations were performed with a 3.7 GHz Intel Xeon processor with 64 GB RAM. The program is implemented in MATLAB 2019 using BiCGstab(2) iterative algorithm with some given tolerance. Norm of the residual error is used as the stopping criteria for the iteration of the algorithm.
\subsection{Two-dimensional test cases}
We consider here four test case studies for the two dimensional Helmholtz equation. In order to compare the performance of the scheme (\ref{eq32}) over the scheme \cite{nabavi2007new} given by (\ref{eq26}), we computed the $l_2$- and $l_{\infty}$-norms of the error and the order of convergence $\Theta_{\infty}$. We also plotted numerical errors versus different wave numbers $K$ for smaller and larger grids. If the numerical solution and exact solution values to (\ref{eq15})-(\ref{bc1}) are $U$ and $u$ respectively, $N$=grid points in $x$- and $y$- coordinate directions. Then the error-norms 
are given by
\begin{equation}\label{MAE2D}
\begin{split}
|| \boldsymbol{\epsilon} ||_\infty=\max_{i,j}|u_{i,j}-U_{i,j}|
,~|| \boldsymbol{\epsilon} ||_2=\frac{1}{N}\sqrt{\sum\limits_{i,j=1}^{N}\mid u_{i,j}-U_{i,j} \mid^2},
\end{split}
\end{equation}
and the computational order of convergence
\begin{equation}\label{Order2D}
\Theta_\infty=log_2\frac{|| \boldsymbol{\epsilon} ||_\infty(N)}{|| \boldsymbol{\epsilon} ||_\infty(2N)}.
\end{equation}
We compared the results obtained by the new scheme (\ref{eq32}) to the 6th order scheme \cite{nabavi2007new} for large wave numbers $K$.
\begin{problem}\label{p4}
	Consider the two-dimensional Helmholtz equation \cite{fu2008compact}
	\begin{equation*}\label{eq36}
	\nabla^2u(x,y)+K^2u(x,y)=\pi^2\sin \pi x\sin \pi y,~~~ (x,y)\in\Omega,
	\end{equation*}
	where $\Omega=[0,1]\times[0,0.5]$, with the Dirichlet boundary
	\begin{equation*}
	\begin{split}
	u(0,y)=u(1,y)=u(x,0)=0, u(x,0.5)=\sin\pi x\sin(l\pi/2)+\frac{\sin\pi x}{l^2-1}.
	\end{split}
	\end{equation*} 
	Where $K^2=\pi^2(1+l^2)$, $l$ is odd number. The analytic solution is given by
	\begin{equation*}
	u(x,y)=\sin\pi x\sin l\pi y+\frac{\sin\pi x\sin\pi y}{l^2-1}.
	\end{equation*}
\end{problem}
\begin{table}[]\caption{Error norms to the solution values of Problem \ref{p4} for $l=7, K\approx 22$.} \label{tab:1}
	\centering
	\vspace{0.1cm}
	\begin{tabular}{{c c c c c c c c c c}}
		\hline
		\multirow{2}{*}{$1/h$} &\multicolumn{3}{c}{new scheme (\ref{eq32})}&&\multicolumn{3}{c}{6th order \cite{nabavi2007new}}\\[0.05cm]
		\hhline{~---~---}& $||\epsilon||_{\infty}$  & $||\epsilon||_2$ & $\Theta_{\infty}$& & $||\epsilon||_{\infty}$ & $||\epsilon||_2$ & $\Theta_{\infty}$ \\[0.05cm]
		\hline
		\vspace{0.1cm}
		16 	& 1.32e-04 & 4.87e-05  & -Inf && 2.00e-03 & 7.42e-04  & -Inf\\[0.05cm]
		32 	& 1.04e-06 & 3.65e-07  & 6.99 && 2.43e-05 & 8.58e-06  & 6.36\\[0.05cm]
		64 	& 2.55e-08 & 8.78e-09  & 5.34 && 3.58e-07 & 1.23e-07  & 6.09\\[0.05cm]
		128 & 4.33e-10 & 1.47e-10  & 5.88 && 5.54e-09 & 1.88e-09  & 6.02\\
		\hline    	  
	\end{tabular}
\end{table}
\begin{table}[]\caption{Error norms to the solution values of Problem \ref{p4} for $l=255, K\approx 800$.} \label{tab:2}
	\centering
	\vspace{0.1cm}
	\begin{tabular}{{c c c c c c c c c c}}
		\hline
		\multirow{2}{*}{$1/h$} &\multicolumn{3}{c}{new scheme (\ref{eq32})}&&\multicolumn{3}{c}{6th order \cite{nabavi2007new}}\\[0.05cm]
		\hhline{~---~---}& $||\epsilon||_{\infty}$ & $||\epsilon||_2$ & $\Theta_{\infty}$ && $||\epsilon||_{\infty}$& $||\epsilon||_2$ & $\Theta_{\infty}$ \\[0.05cm]
		\hline
		512	& 3.64e-02& 1.06e-02  & -Inf && 2.23e-01& 6.53e-02 & -Inf\\[0.05cm]
		1024& 1.06e-04& 3.09e-05  & 8.42 && 2.50e-03& 7.26e-04 & 6.48\\[0.05cm]
		2048& 3.63e-07& 1.05e-07  & 8.20 && 3.63e-05& 1.05e-05 & 6.11\\
		\hline    	  
	\end{tabular}
\end{table}
The error norms for the new scheme (\ref{eq32}) and the sixth-order \cite{nabavi2007new} for $K\approx 22,$ and very large $K\approx 800$ are given in Tables \ref{tab:1}-\ref{tab:2}. For this test case, the tolerance for the iteration stopping criteria is taken as $1.0e-11$. In Table \ref{tab:3}, the error ratio of the new scheme (\ref{eq32}) to the sixth-order scheme \cite{nabavi2007new} is obtained for $l=3,  K\approx 10$ and $l=255, K\approx 800$ . In Table \ref{tab:4}, $l_{\infty}$- error for new scheme (\ref{eq32}) and the sixth order scheme \cite{nabavi2007new} for different grid points and wave numbers is also computed.

It is noted that the source function in Problem \ref{p4} is independent of the wave number $K$. From Table \ref{tab:1}-\ref{tab:2}, it is clear that the result obtained by the new scheme is more accurate than the standard sixth order scheme. Further, Table \ref{tab:2} shows that the new scheme is highly accurate for large enough wave number $K$. Table \ref{tab:3} shows that the given error ratio also decreases for large wave number $K$ which shows that the  new scheme(\ref{eq32}) is efficient than the scheme \cite{nabavi2007new} for large wave number $K$ and small step size $h$ so that $Kh$ is sufficiently small.

In Figure \ref{fig:5}, numerical error and order of accuracy of both the schemes for different wave numbers $K$ is plotted. From Figure \ref{fig:5}(a), It is clear that the numerical errors are growing with $K$ but the numerical errors developed by the new scheme is less than that of the standard scheme. From Figure \ref{fig:5}(b), it is also clear that the slope of each line is six in both cases for small wave number $K$ and in case of large $K$, the slope of the line for the new scheme is more than that of the scheme \cite{nabavi2007new} which implies that the scheme (\ref{eq32}) has more accuracy than that of scheme \cite{nabavi2007new} in case of large enough wave number, which also confirms the accuracy of the new scheme. The exact and numerical solutions are also plotted in Figure \ref{fig:2Dsolex1} for this test case.
\begin{figure}[]
	\begin{minipage}{.5\textwidth}
		\centering
		\includegraphics[width=1.0\linewidth]{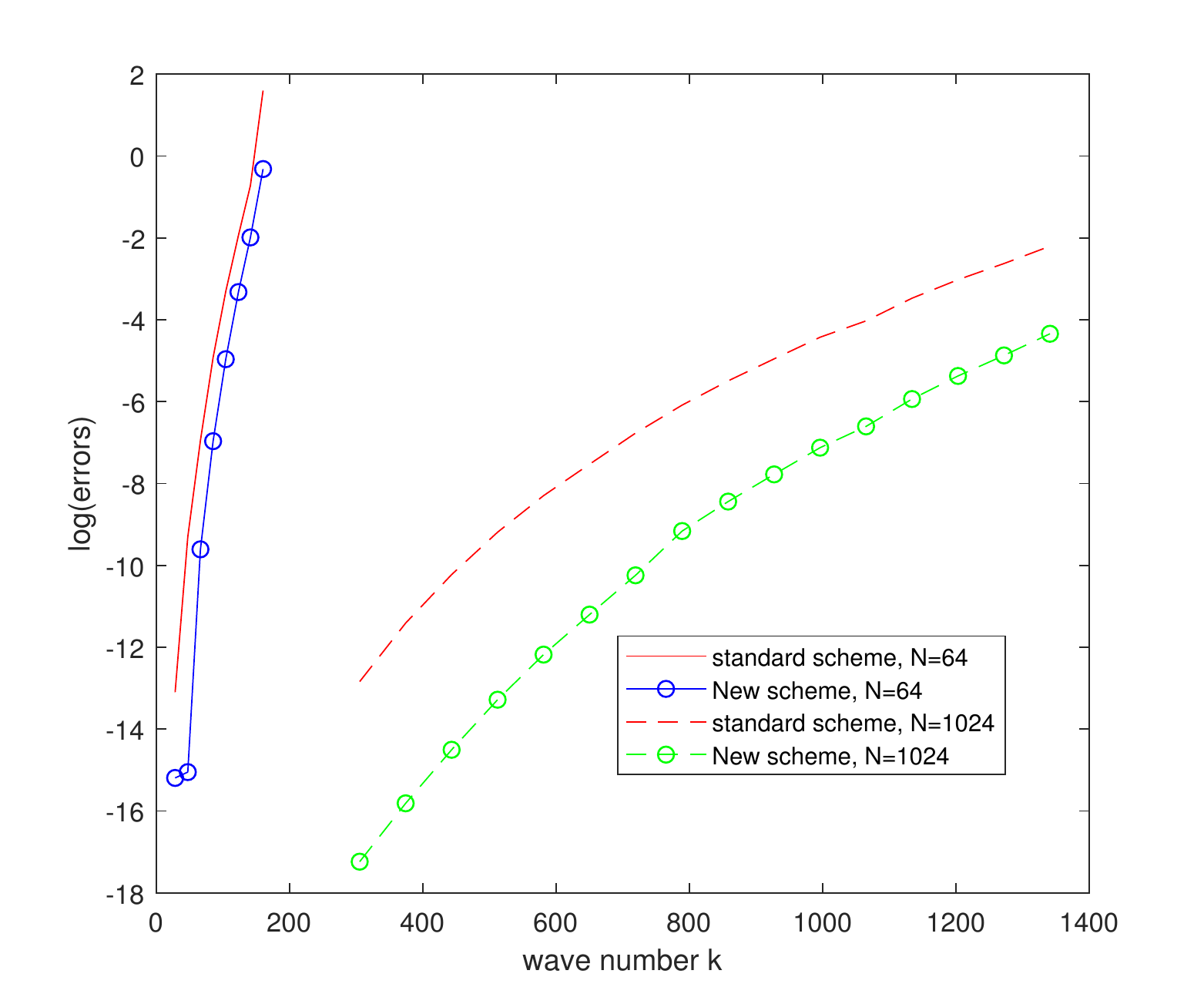}
		\subcaption{}
	\end{minipage}	
	\begin{minipage}{.5\textwidth}
		\centering
		\includegraphics[width=1.0\linewidth]{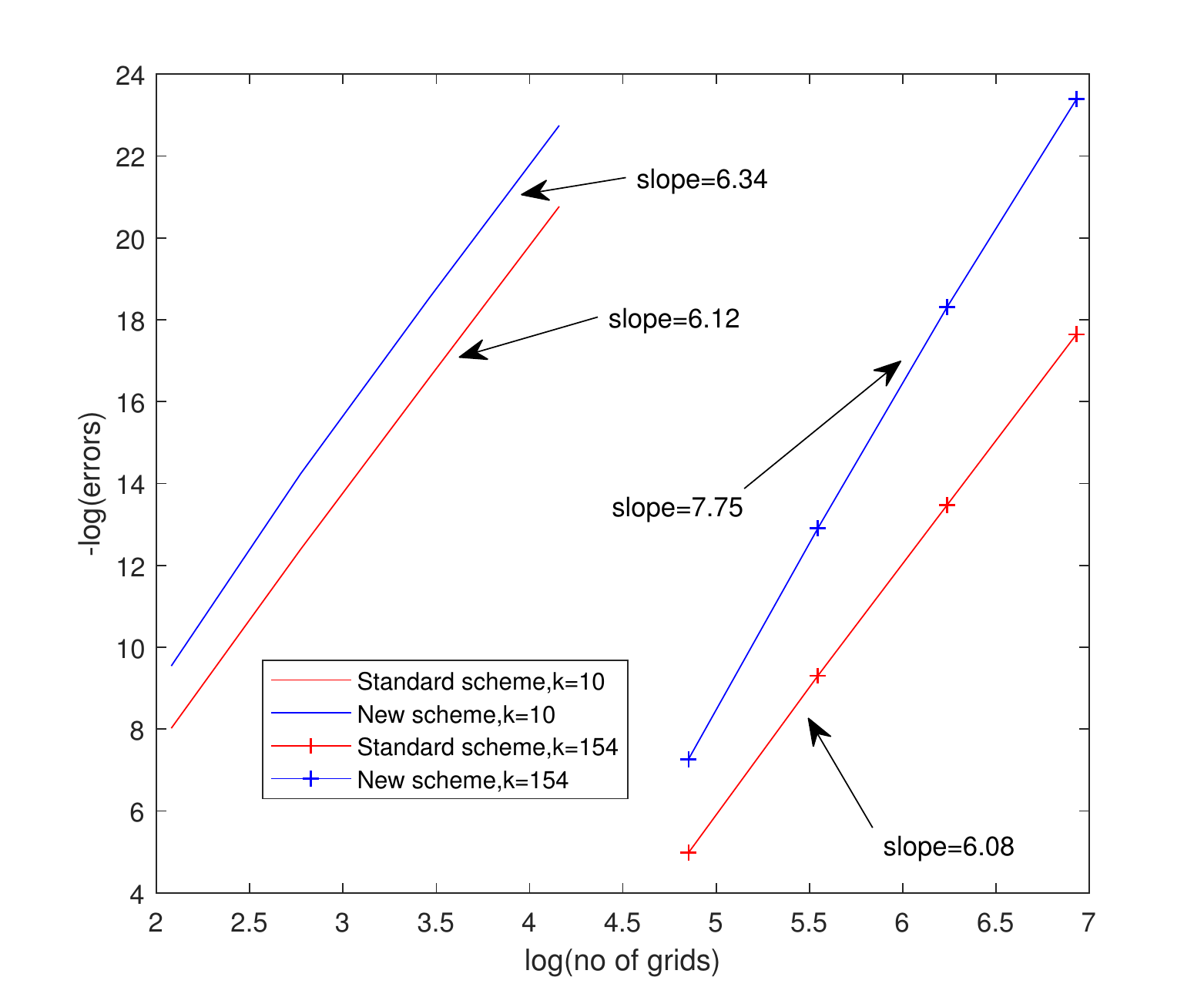}
		\subcaption{}
	\end{minipage}
	\caption{(a) Numerical errors (b) Order of accuracy with different $K$ for Problem \ref{p4}.}
	\label{fig:5}
\end{figure}
\begin{table}[]\caption{$l_2$-Error ratio $||E||_{(\ref{eq32})}/||E||_{\cite{nabavi2007new}}$ for $K\approx 10$ and $K\approx 800$ in Problem \ref{p4}.} \label{tab:3}
	\centering
	\vspace{0.1cm}
	\begin{tabular}{{c c c c c c c c}}
		\hline
		\multirow{2}{*}{$1/h$} &\multicolumn{2}{c}{$||E||_{(\ref{eq32})}/||E||_{\cite{nabavi2007new}}$}\\[0.05cm]
		\hhline{~----}& $K\approx 10$  & $K\approx 800$\\[0.05cm]
		\hline
		512	 & 1.0555  & 0.1623 \\ [0.05cm] 
		1024 & 0.7434  & 0.0425 \\[0.05cm]
		2048 & 1.0047  & 0.0100 \\
		\hline    	  
	\end{tabular}
\end{table}
\begin{table}[]\caption{Number of grid points with the wave numbers for a given accuracy: Problem \ref{p4}.} \label{tab:4}
	\centering
	\vspace{0.1cm}
	\begin{tabular}{{c c c c c c c c}}	
		\hline
		\multirow{2}{*}{$N=1.3626\times K^{7/6}$}  & \multicolumn{3}{c}{$l_{\infty}$-error}\\[0.05cm]
		\hhline{~----}& $K$ & 6th order \cite{nabavi2007new} & new scheme (\ref{eq32})\\[0.05cm]
		\hline
		20  & 10    & 1.06e-06 &  1.59e-07\\[0.05cm]
		80  & 32.98 & 2.93e-06 &  5.08e-08\\[0.05cm]
		140 & 53.49 & 1.84e-06 &  1.18e-07\\[0.05cm]
		200 & 72.32 & 1.88e-06 &  4.63e-08\\[0.05cm]
		260 & 89.99 & 2.25e-06 &  1.96e-08\\[0.05cm]
		320 & 107.9 & 3.67e-06 &  3.24e-08\\
		\hline    	  
	\end{tabular}
\end{table}
\begin{figure}[]
	\begin{minipage}{.5\textwidth}
		\centering
		\includegraphics[width=1.0\linewidth]{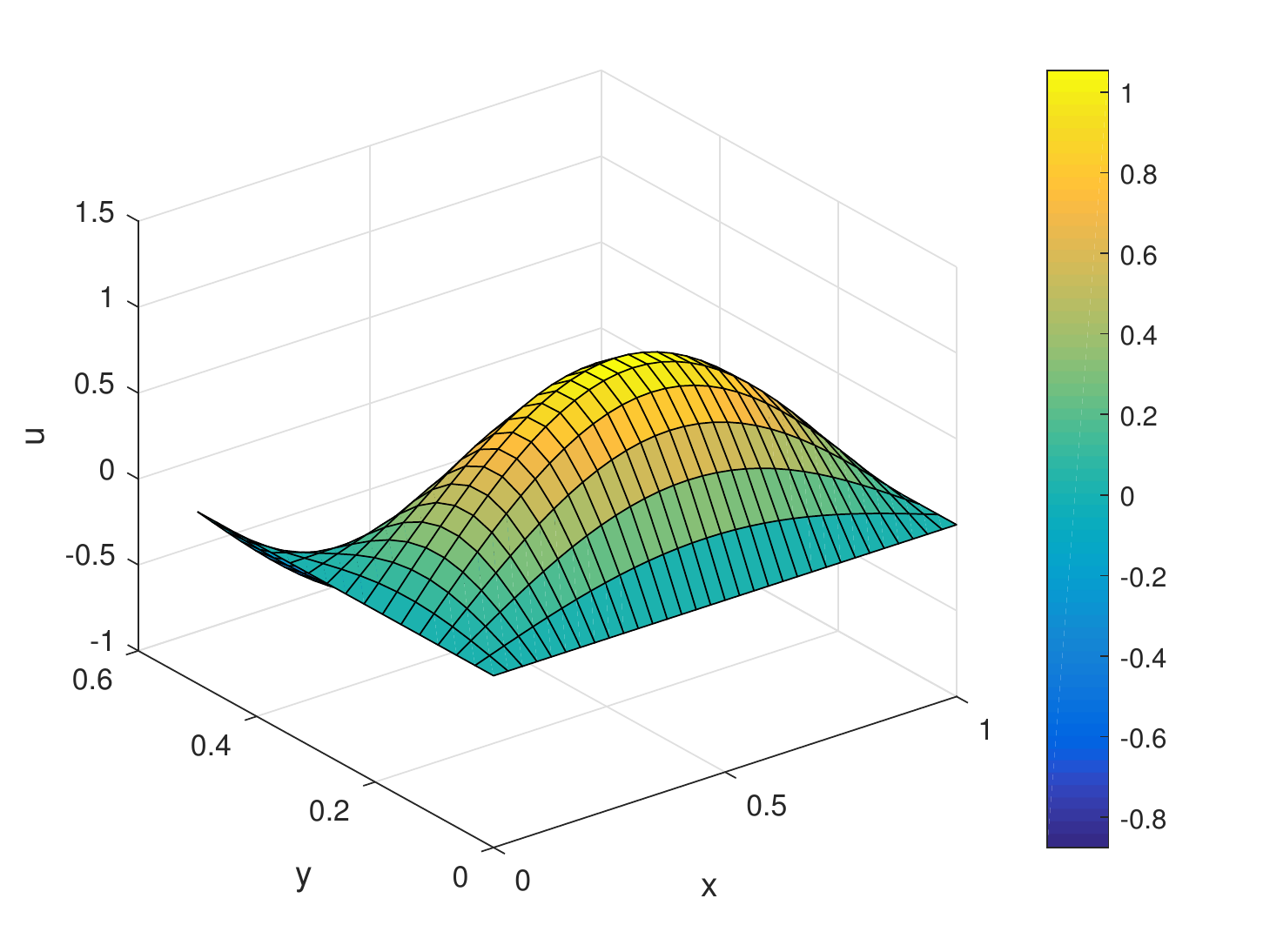}
		\subcaption{}
	\end{minipage}	
	\begin{minipage}{.5\textwidth}
		\centering
		\includegraphics[width=1.0\linewidth]{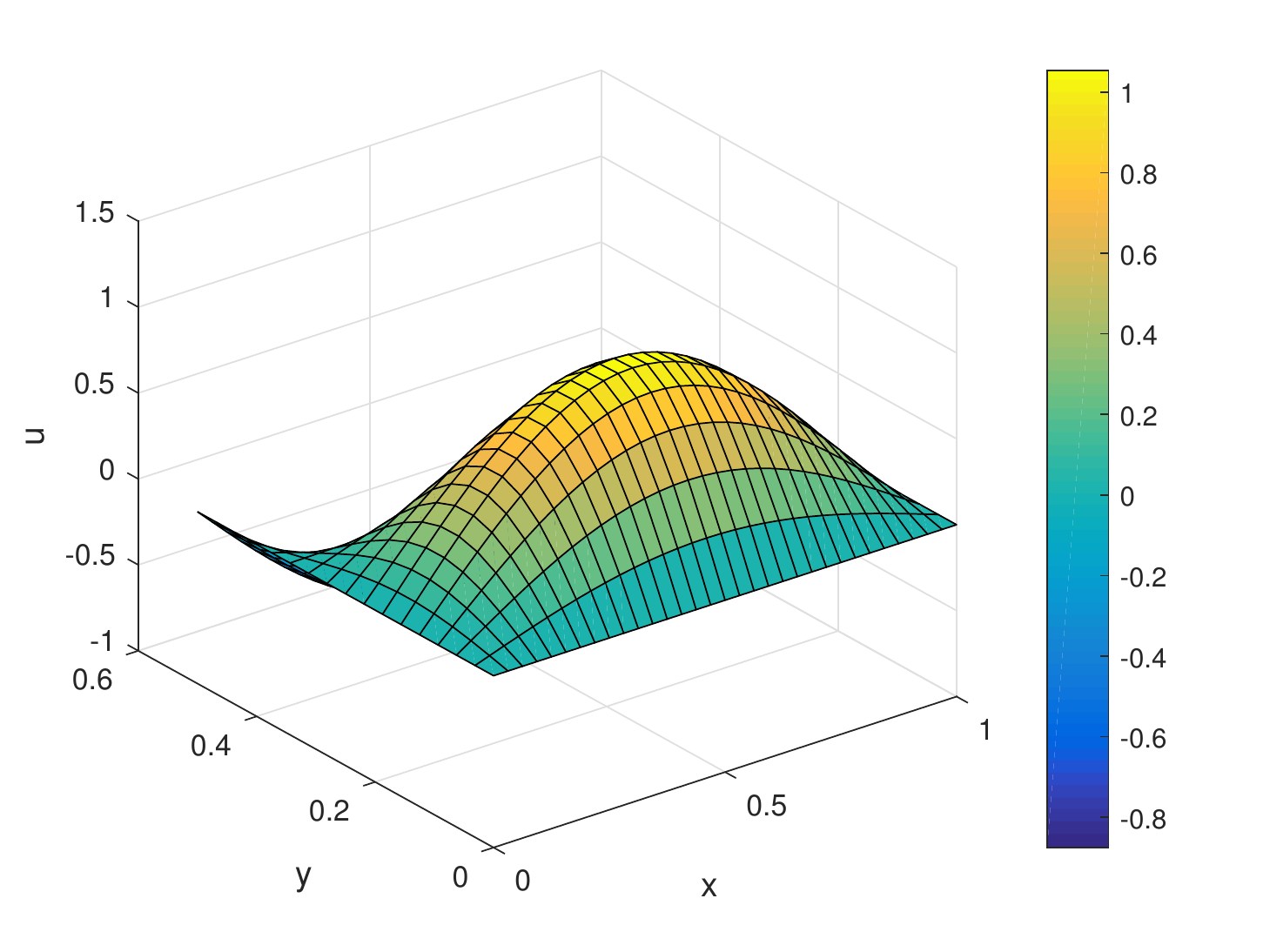}
		\subcaption{}
	\end{minipage}
	\caption{(a) Exact solution and (b) Numerical solution with $N=32, K=10$: Problem \ref{p4}.}
	\label{fig:2Dsolex1}
\end{figure}

In Table \ref{tab:4}, the pollution formula $N=CK^{(p+1)/p}$ is verified for the Problem \ref{p4}, where $p$ is the order of the finite difference scheme (p=6 in this case), N is the no of mesh grids along $x$- direction and $C$ is a constant that depends on the  accuracy of the scheme. Table \ref{tab:4} shows that the number of grid points required for a given accuracy increases with the wave number $K$ \cite{babuska1997pollution,bayliss1985accuracy}. The constant $C=20/(10)^{7/6}$ is calculated by the base value $N=20$ and $K=10$. From the Table \ref{tab:4}, it is clear that the new scheme (\ref{eq32}) provides high accuracy with the validation of the pollution formula. 
\begin{problem}
	We consider the test case in Problem \ref{p4} with the Neumann boundary condition
	\begin{equation*}\label{NM2D}
	u(0,y)=u(1,y)=u(x,0)=0, \partial_yu|_{y=1/2}=0.
	\end{equation*}
	with the same analytic solution as in  Problem \ref{p4}, where $l$ is an odd number such that $K^2=\pi^2(1+l^2)$.
\end{problem}
In order to get the sixth order accuracy, the sixth order approximation (\ref{NBC5}) for the Neumann boundary is used with the difference schemes. The maximum error norms and computational order of convergence for the Neumann problem \ref{NM2D} are computed in Tables \ref{tab:2DNM1}-\ref{tab:2DNM2}. For this test case, the tolerance for the iteration stopping criteria is taken as $1.0e-10$. Table \ref{tab:2DNM1} shows that both the schemes are sixth order accurate and the new scheme is more accurate than (\ref{eq26}). From Table \ref{tab:2DNM2}, it is also clear that the new scheme (\ref{eq32}) is highly accurate in case of large wave numbers with sufficiently small $Kh$. We also plotted the exact and numerical solution values in Figure \ref{fig:2Dsolex2} for the Problem \ref{NM2D}.
\begin{table}[]\caption{Error norms to the solution values of Problem \ref{NM2D} for $l=7, K\approx 22$.} \label{tab:2DNM1}
	\centering
	\vspace{0.1cm}
	\begin{tabular}{{c c c c c c c c c c}}
		\hline
		\multirow{2}{*}{$1/h$} &\multicolumn{3}{c}{new scheme (\ref{eq32})}&&\multicolumn{3}{c}{6th order \cite{nabavi2007new}}\\[0.05cm]
		\hhline{~----~----}& $||\epsilon||_{\infty}$  & $||\epsilon||_2$ & $\Theta_{\infty}$& & $||\epsilon||_{\infty}$ & $||\epsilon||_2$ & $\Theta_{\infty}$ \\[0.05cm]
		\hline
		\vspace{0.1cm}
		16 	&1.42e-03 &6.91e-04 & -Inf&&2.20e-02 & 1.07e-02 & -Inf\\[0.05cm]
		32 	&1.11e-05 &5.67e-06 & 6.99&&2.61e-04 & 1.33e-04 & 6.40\\[0.05cm]
		64 	&2.74e-07 &1.40e-07 & 5.34&&3.85e-06 & 1.96e-06 & 6.09 \\[0.05cm]
		128 &4.65e-09 &2.35e-09 & 5.88&&5.93e-08 & 3.00e-08 & 6.02 \\
		\hline    	  
	\end{tabular}
\end{table}
\begin{table}[]\caption{Error norms to the solution values of Problem \ref{NM2D} for $l=127, K\approx 400$.} \label{tab:2DNM2}
	\centering
	\vspace{0.1cm}
	\begin{tabular}{{c c c c c c c c c c}}
		\hline
		\multirow{2}{*}{$1/h$} &\multicolumn{3}{c}{new scheme (\ref{eq32})}&&\multicolumn{3}{c}{6th order \cite{nabavi2007new}}\\[0.05cm]
		\hhline{~---~---}& $||\epsilon||_{\infty}$ & $||\epsilon||_2$ & $\Theta_{\infty}$ && $||\epsilon||_{\infty}$& $||\epsilon||_2$ & $\Theta_{\infty}$ \\[0.05cm]
		\hline
		512	& 1.93e-02& 9.66e-0  & -Inf && 3.13e-01& 1.57e-01 & -Inf\\[0.05cm]
		1024& 7.28e-05& 3.65e-05 & 8.05 && 6.57e-03& 3.29e-03 & 5.57\\[0.05cm]
		2048& 2.88e-07& 1.44e-07 & 7.98 && 1.01e-04& 5.08e-05 & 6.02\\
		\hline    	  
	\end{tabular}
\end{table}
\begin{figure}[]
	\begin{minipage}{.5\textwidth}
		\centering
		\includegraphics[width=1.0\linewidth]{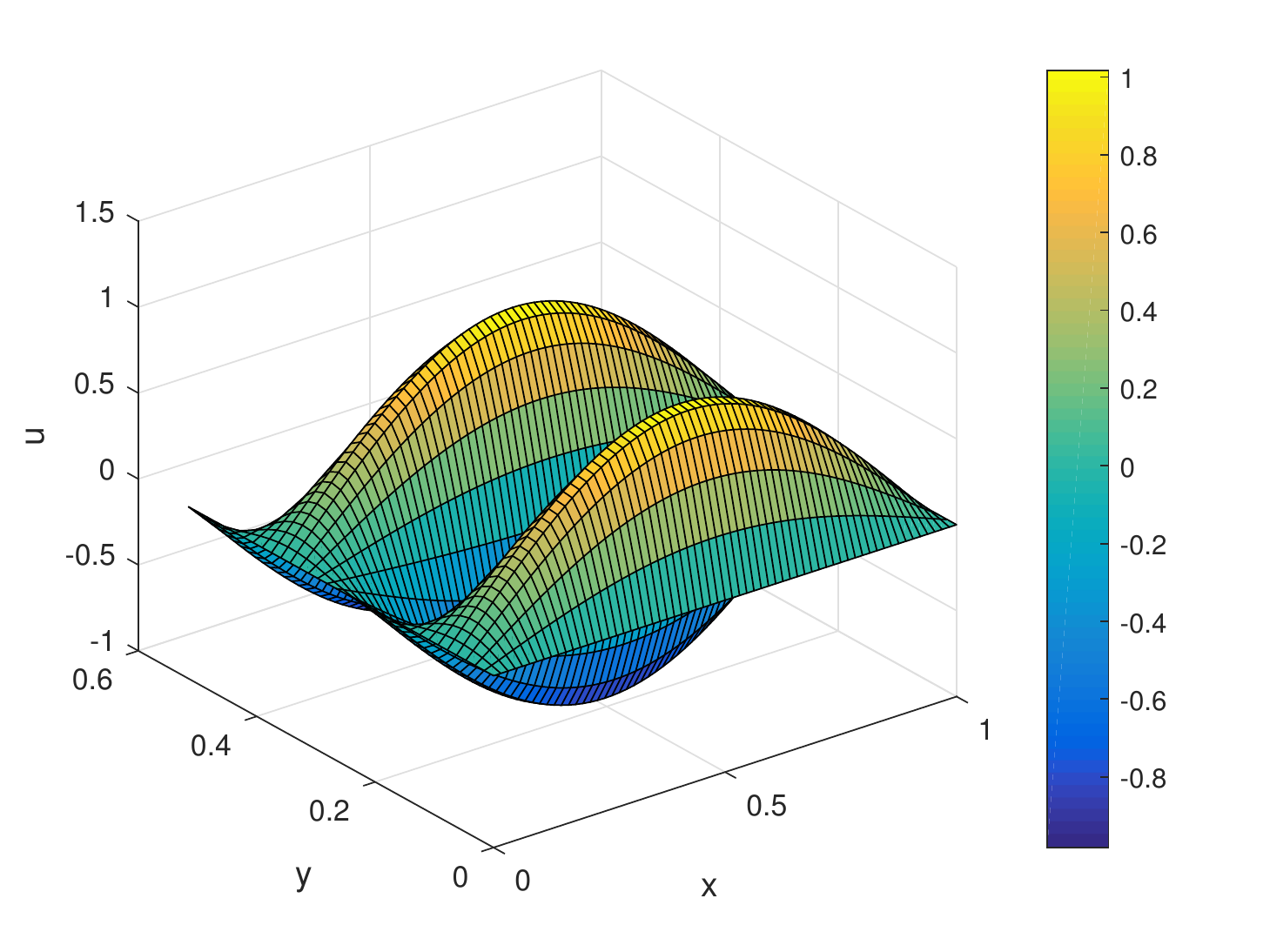}
		\subcaption{}
	\end{minipage}	
	\begin{minipage}{.5\textwidth}
		\centering
		\includegraphics[width=1.0\linewidth]{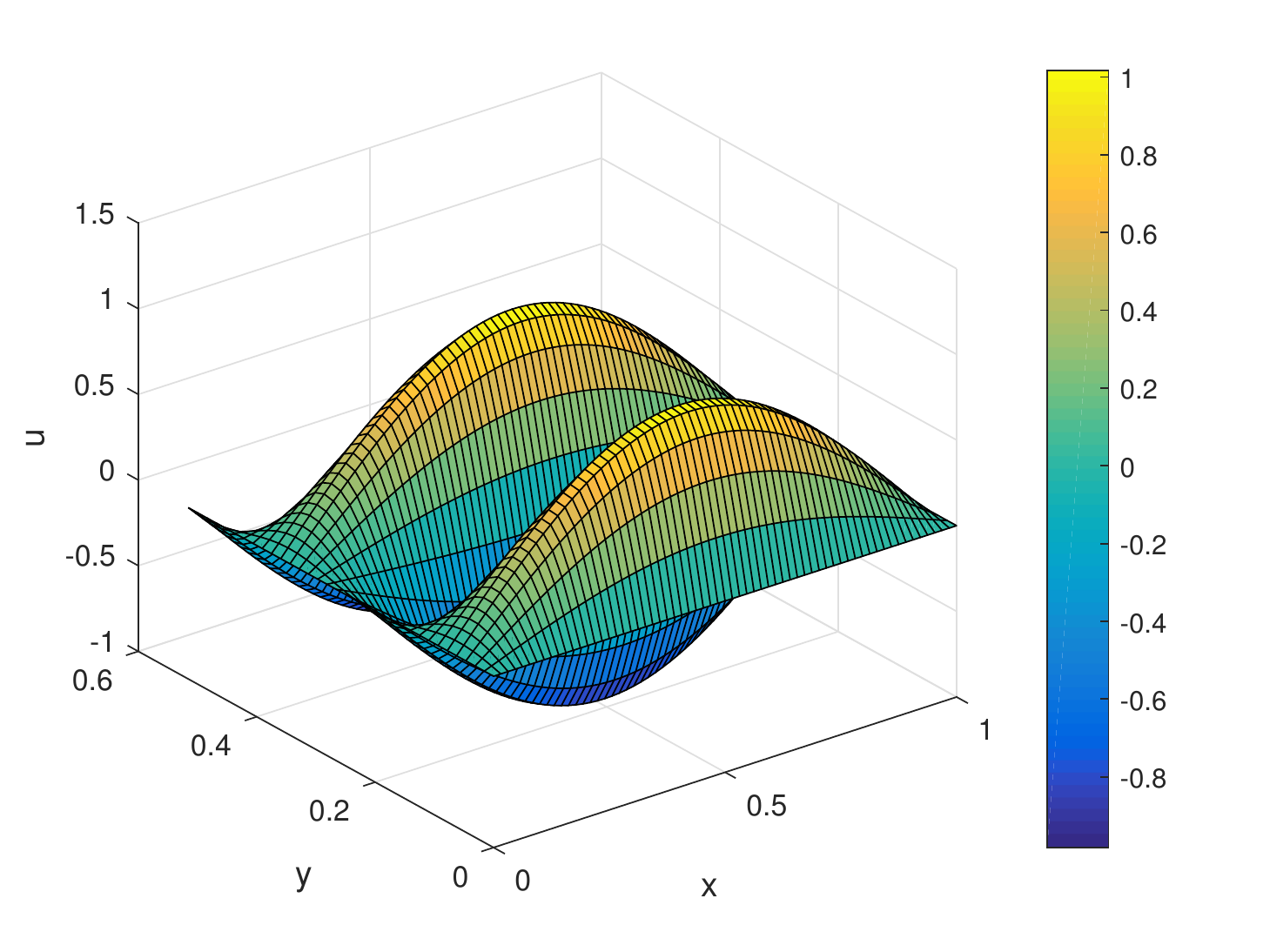}
		\subcaption{}
	\end{minipage}
	\caption{(a) Exact and (b) numerical solution with $N=64, K=22$ for Problem \ref{NM2D}.}
	\label{fig:2Dsolex2}
\end{figure}
\begin{problem}\label{p5}
	Consider the model problem \cite{fu2008compact}
	\begin{equation*}\label{eq38}
	\begin{split}
	&\nabla^2u+K^2u=(K^2-\pi^2-K^2 \pi^2)\sin\pi x\sin K\pi y,~ (x,y)\in\Omega\\
	&u|_{\partial\Omega}=0,
	\end{split}
	\end{equation*}
	where $\Omega=[0,1]\times[0,1]$ is a square domain with boundary $\partial\Omega$. 
	The analytic solution is given by $u(x,y)=\sin\pi x\sin K\pi y$.
\end{problem}
The maximum error norms and computational order of convergence for the scheme (\ref{eq32}) and (\ref{eq26}) are computed in Tables \ref{tab:5}-\ref{tab:6}. Table \ref{tab:5} shows that both the schemes are sixth order accurate and the new scheme is comparatively more accurate than (\ref{eq26}). From Table \ref{tab:6}, it is also clear that the new scheme (\ref{eq32}) maintained its sixth order convergence rate even for large enough wave numbers $K$ such that $Kh$ is sufficiently small. From Table \ref{tab:5}, it is also observed that the computed result obtained by both schemes are much more accurate than those obtained by fourth order difference scheme \cite{fu2008compact}.

We plotted the accuracy and numerical errors for the new scheme and the standard sixth order scheme \cite{nabavi2007new} in Figures \ref{fig:or1}-\ref{fig:6}. From Figure \ref{fig:or1}, it is clear that the slope of each line is six. In Figure \ref{fig:6}, it is shown that the numerical errors are growing with the increasing $K$ for both the scheme and the numerical errors developed by the new scheme is less than that of the scheme \cite{nabavi2007new}, which confirms the accuracy of the new scheme. We also plotted the exact and numerical solution values in Figure \ref{fig:2Dsolex3}.
\begin{table}[]\caption{Max-error to the solution values of Problem \ref{p5} for $K=30$.} \label{tab:5}
	\centering
	\vspace{0.1cm}
	\begin{tabular}{{c c c c c c c c c c}}
		\hline
		\multirow{2}{*}{$1/h$} &\multicolumn{2}{c}{new scheme (\ref{eq32})}&&\multicolumn{2}{c}{6th order \cite{nabavi2007new}}&&\multicolumn{2}{c}{4th order \cite{fu2008compact}}\\[0.05cm]
		\hhline{~--~--~---}& $||\epsilon||_{\infty}$ & $\Theta_{\infty}$ && $||\epsilon||_{\infty}$ & $\Theta_{\infty}$&&$||\epsilon||_{\infty}$ & $\Theta_{\infty}$ \\[0.05cm]
		\hline
		32 & 9.3715e-02  & -Inf && 1.0534e-01 & -Inf && 3.61e-01 & -Inf\\[0.05cm]
		64 & 6.7252e-04  & 7.12 && 7.4942e-04 & 7.14 && 1.28e-02 & 4.82\\[0.05cm]
		128& 8.7385e-06  & 6.27 && 9.7186e-06 & 6.27 && 7.10e-04 & 4.17\\[0.05cm]
		256& 1.2846e-07  & 6.09 && 1.4280e-07 & 6.09 && 4.31e-05 & 4.04\\[0.05cm]
		512& 1.9761e-09  & 6.02 && 2.1963e-09 & 6.02 && 2.68e-06 & 4.01\\
		\hline    	  
	\end{tabular}
\end{table}
\begin{figure}[]
	\begin{minipage}{.5\textwidth}
		\centering
		\includegraphics[width=1.0\linewidth]{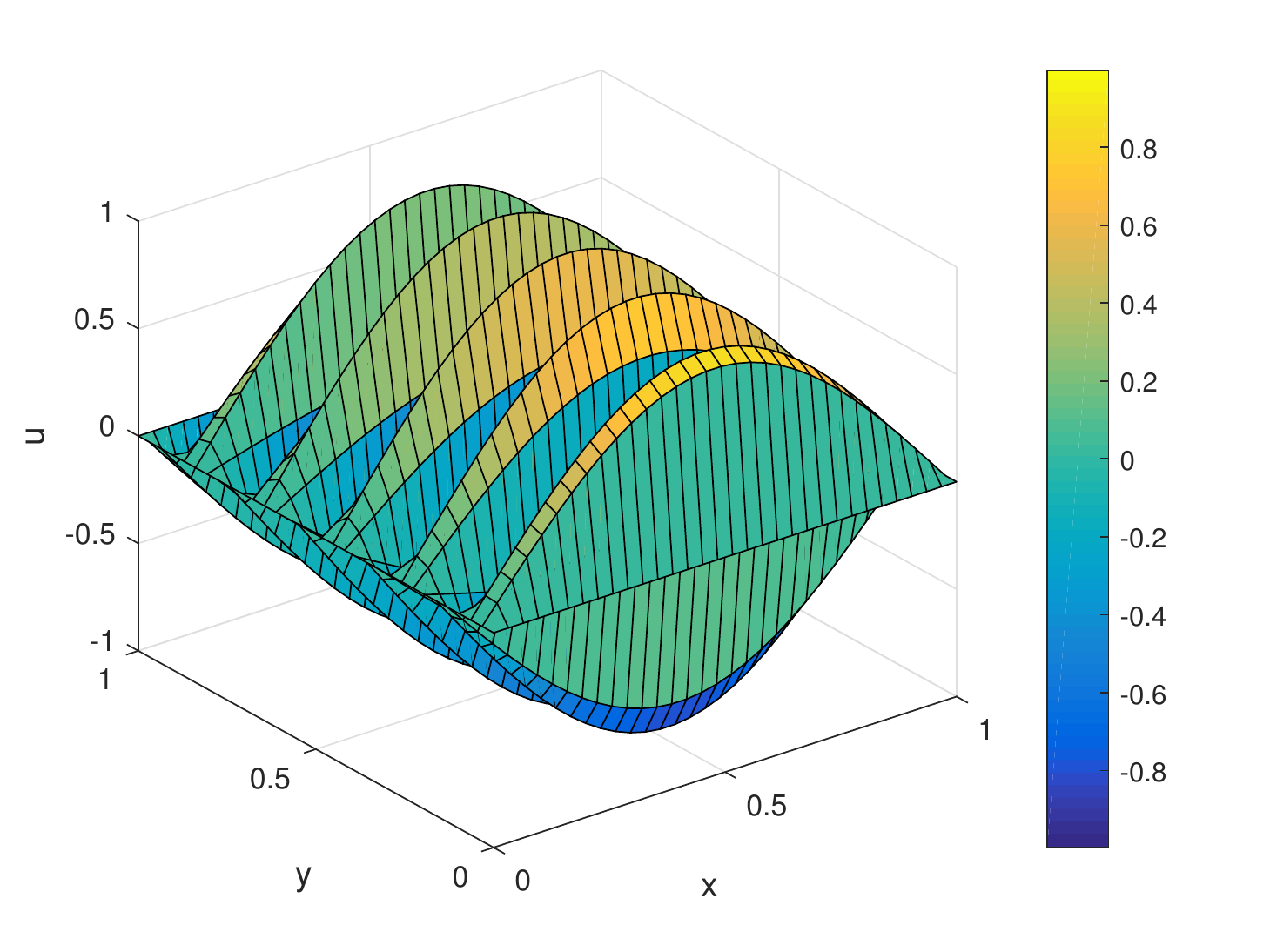}
		\subcaption{}
	\end{minipage}	
	\begin{minipage}{.5\textwidth}
		\centering
		\includegraphics[width=1.0\linewidth]{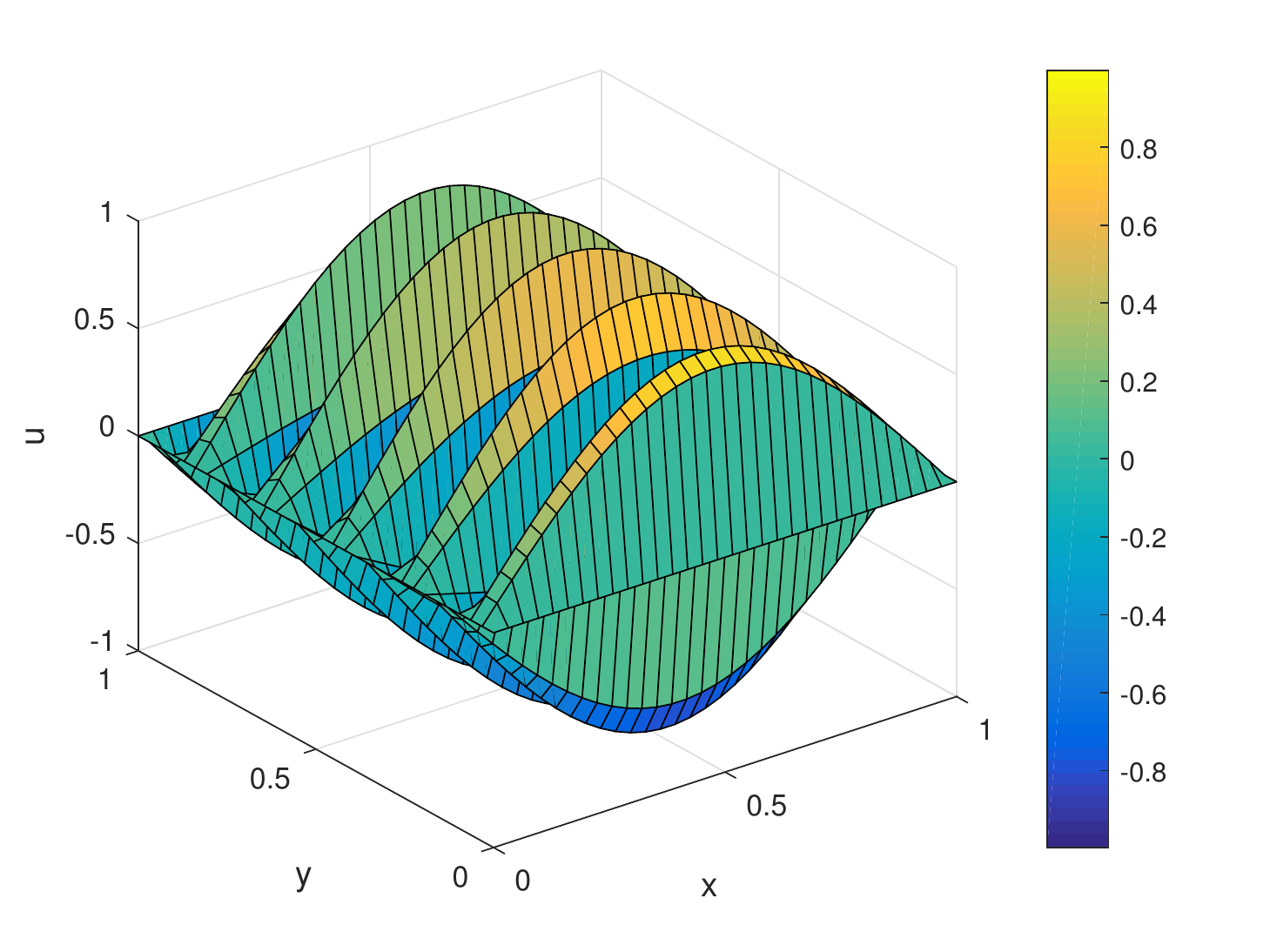}
		\subcaption{}
	\end{minipage}
	\caption{(a) Exact and (b) Numerical solution with $N=32, K=10$ for Problem \ref{p5}.}
	\label{fig:2Dsolex3}
\end{figure}
\begin{problem}\label{p6}
	Consider the model problem \cite{singer2006sixth}
	\begin{equation*}
	\begin{split}
	&\nabla^2u+K^2u=(K^2-l^2-m^2)\sin lx\sin my,~ (x,y)\in[0,\pi]\times[0,\pi]
	\end{split}
	\end{equation*}
	with the boundary conditions $u(0,y)=u(\pi,y)=u(x,0)=u(x,\pi)=0$. Where $l, m$ are positive integers. The analytic solution is given by $u(x,y)=\sin lx\sin my$.
\end{problem}
\begin{figure}
	\centering
	\includegraphics[scale=0.4]{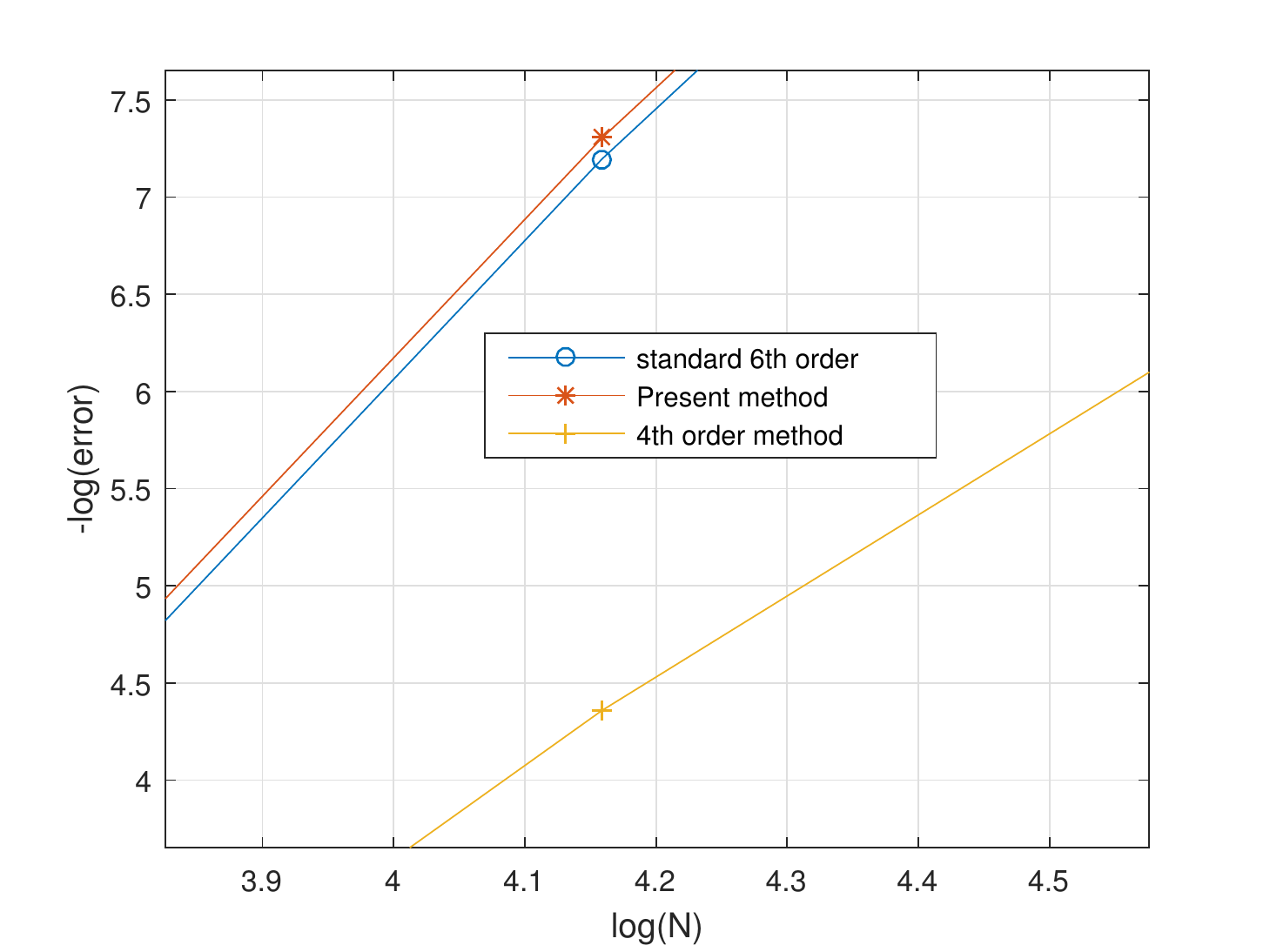}
	\caption{Order of accuracy for $K=30$ for Problem \ref{p5}.}
	\label{fig:or1}	
\end{figure}	
\begin{table}[]\caption{Order of convergence for $K=300$ in Problem \ref{p5}.} \label{tab:6}
	\centering
	\vspace{0.1cm}
	\begin{tabular}{{c c c c c c c c c c}}
		\hline
		\multirow{2}{*}{$1/h$} &\multicolumn{2}{c}{new scheme (\ref{eq32})}&&\multicolumn{2}{c}{6th order \cite{nabavi2007new}}\\[0.05cm]
		\hhline{~--~-----}& $||\epsilon||_{\infty}$& $\Theta_{\infty}$ && $||\epsilon||_{\infty}$& $\Theta_{\infty}$\\[0.05cm]
		\hline
		512 & 2.5966e-03  &      && 2.9009e-03 &     \\[0.05cm]
		1024& 3.2478e-05  & 6.32 && 3.6175e-05 & 6.33\\[0.05cm]
		2048& 4.8063e-07  & 6.08 && 5.3494e-07 & 6.08\\
		\hline    	  
	\end{tabular}
\end{table}	
\begin{figure}[]
	\begin{minipage}{.5\textwidth}
		\centering
		\includegraphics[width=1.0\linewidth]{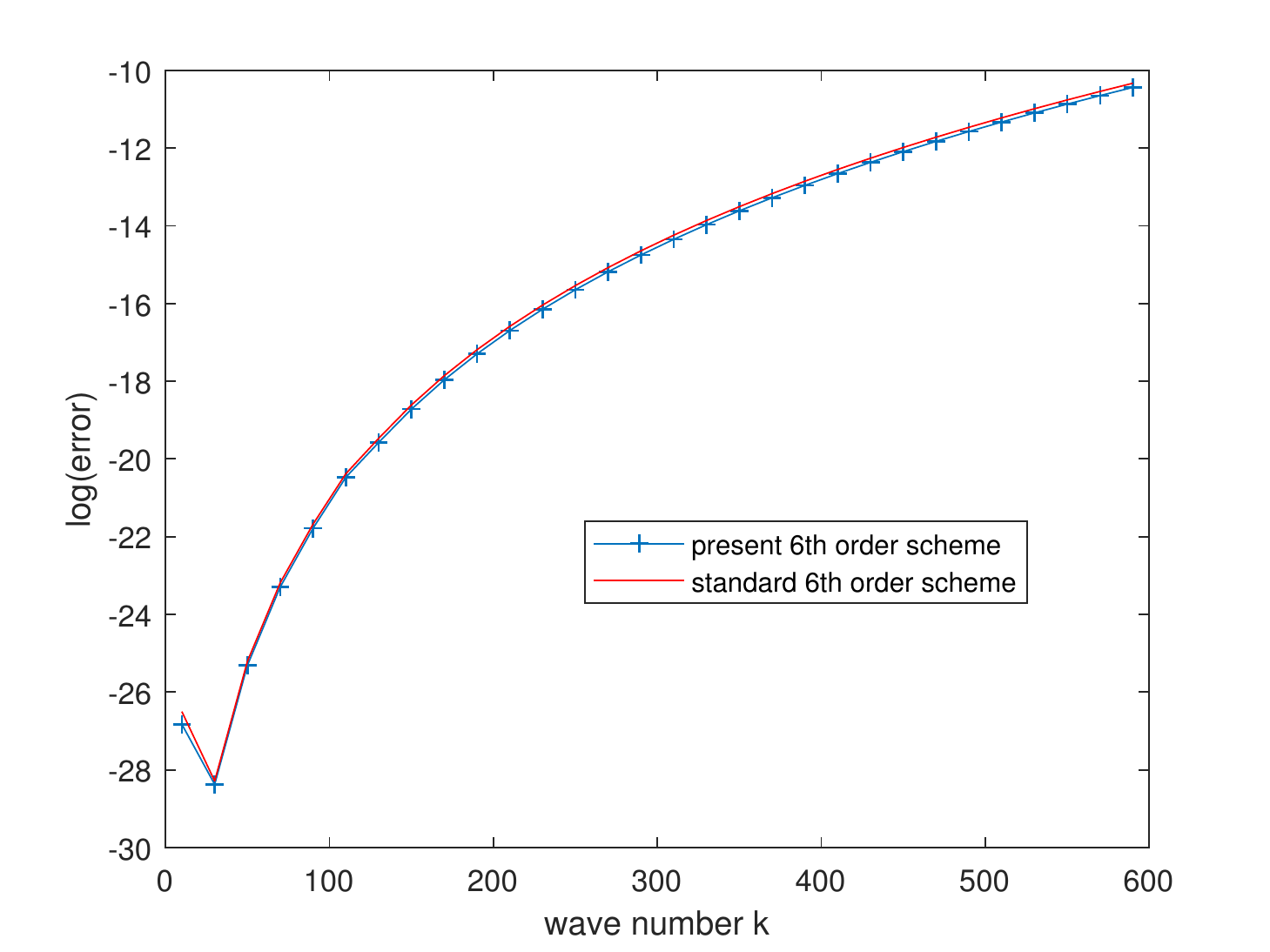}
		\subcaption{}
	\end{minipage}	
	\begin{minipage}{.5\textwidth}
		\centering
		\includegraphics[width=1.0\linewidth]{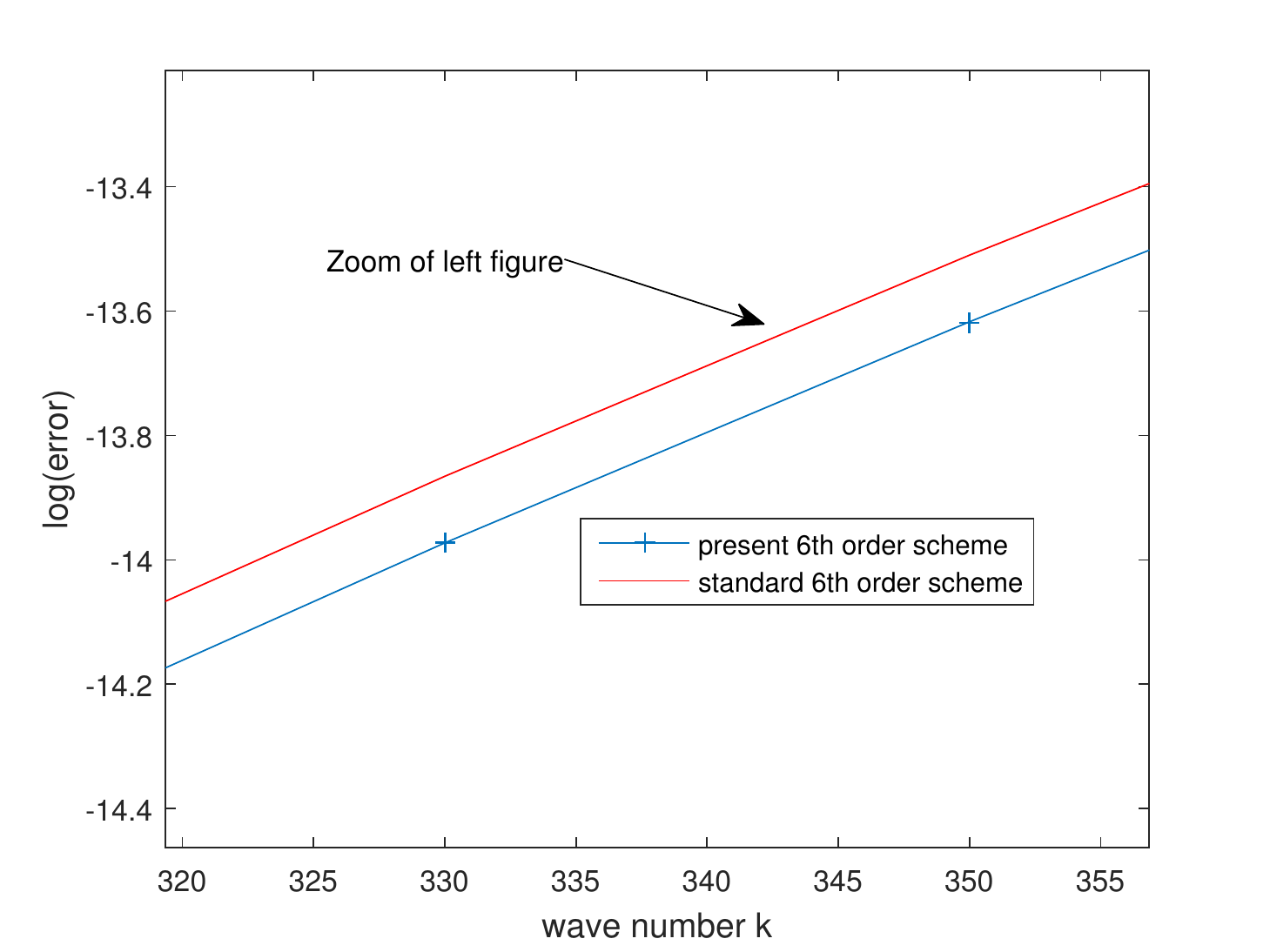}
		\subcaption{}
	\end{minipage}
	\caption{Numerical errors for N=2048 w.r.t $K$ for Problem \ref{p5}.}
	\label{fig:6}
\end{figure}
The error norms and computational order of convergence for the scheme (\ref{eq32}) and scheme \cite{nabavi2007new} are computed in Tables \ref{tab:7}-\ref{tab:8}. Table \ref{tab:7} shows that both the schemes are sixth order accurate and the new scheme is more accurate than (\ref{eq26}). From Table \ref{tab:8}, it is also clear that the new scheme (\ref{eq32}) is highly accurate for large wave numbers $K$ such that $Kh$ is sufficiently small.

The numerical errors and accuracy of the new scheme are plotted in Figures \ref{fig:errorpr6}-\ref{fig:orderpr6}. Figure \ref{fig:errorpr6}, clearly shows that the numerical errors are growing with the increasing $K$ for both the scheme and the numerical errors developed by the new scheme is less than that of the scheme \cite{nabavi2007new}. From Figure \ref{fig:orderpr6}, it is clear that the slope of the line for new scheme is more than that of the scheme \cite{nabavi2007new} for large wave number $K$ which confirms the accuracy of the new scheme for the Problem \ref{p6} in case of large $K$.  The solution values for both the exact and numerical solution values are also  plotted in Figure \ref{fig:2Dsolex4} for the Problem \ref{p6}.
\begin{table}[]\caption{Error norms to the solution values of Problem \ref{p6} with $l=100, m=50, K\approx 112$.} \label{tab:7}
	\centering
	\vspace{0.1cm}
	\begin{tabular}{{c c c c c c c c c c}}
		\hline
		\multirow{2}{*}{$1/h$} &\multicolumn{3}{c}{new scheme (\ref{eq32})}&&\multicolumn{3}{c}{6th order \cite{nabavi2007new}}\\[0.05cm]
		\hhline{~---~----}& $||\epsilon||_{\infty}$  & $||\epsilon||_2$& $\Theta_{\infty}$& & $||\epsilon||_{\infty}$ & $||\epsilon||_2$& $\Theta_{\infty}$ \\[0.05cm]
		\hline
		\vspace{0.1cm}
		256 & 2.6355e-02 & 1.6781e-02 & -Inf && 6.9254e-02 & 4.4095e-02 & -Inf\\[0.05cm]
		512	& 2.7938e-04 & 1.7786e-04 & 6.56 && 8.5743e-04 & 5.4586e-04 & 6.34\\[0.05cm]
		1024& 3.9206e-06 & 2.4959e-06 & 6.16 && 1.2417e-05 & 7.9048e-06 & 6.11\\
		\hline    	  
	\end{tabular}
\end{table}
\begin{table}[]\caption{Error norms to the solution values of Problem \ref{p6} with $l=500, m=100, K\approx 510$.} \label{tab:8}
	\centering
	\vspace{0.1cm}
	\begin{tabular}{{c c c c c c c c c c}}
		\hline
		\multirow{2}{*}{$1/h$} &\multicolumn{3}{c}{new scheme (\ref{eq32})}&&\multicolumn{3}{c}{6th order \cite{nabavi2007new}}\\[0.05cm]
		\hhline{~---~----}& $||\epsilon||_{\infty}$  & $||\epsilon||_2$& $\Theta_{\infty}$& & $||\epsilon||_{\infty}$ & $||\epsilon||_2$& $\Theta_{\infty}$ \\[0.05cm]
		\hline
		\vspace{0.1cm}
		1024& 3.0193e-01 & 1.9222e-01 & -Inf && 7.1572e-01 & 4.5564e-01 & -Inf\\[0.05cm]
		2048& 1.4838e-03 & 9.4459e-04 & 7.67 && 2.7449e-02 & 1.7474e-02 & 4.70\\[0.05cm]
		4096& 8.5049e-06 & 5.4144e-06 & 7.45 && 4.0781e-04 & 2.5962e-04 & 6.07\\
		\hline    	  
	\end{tabular}
\end{table}
\begin{figure}[]
	\begin{minipage}{.5\textwidth}
		\centering
		\includegraphics[width=1.0\linewidth]{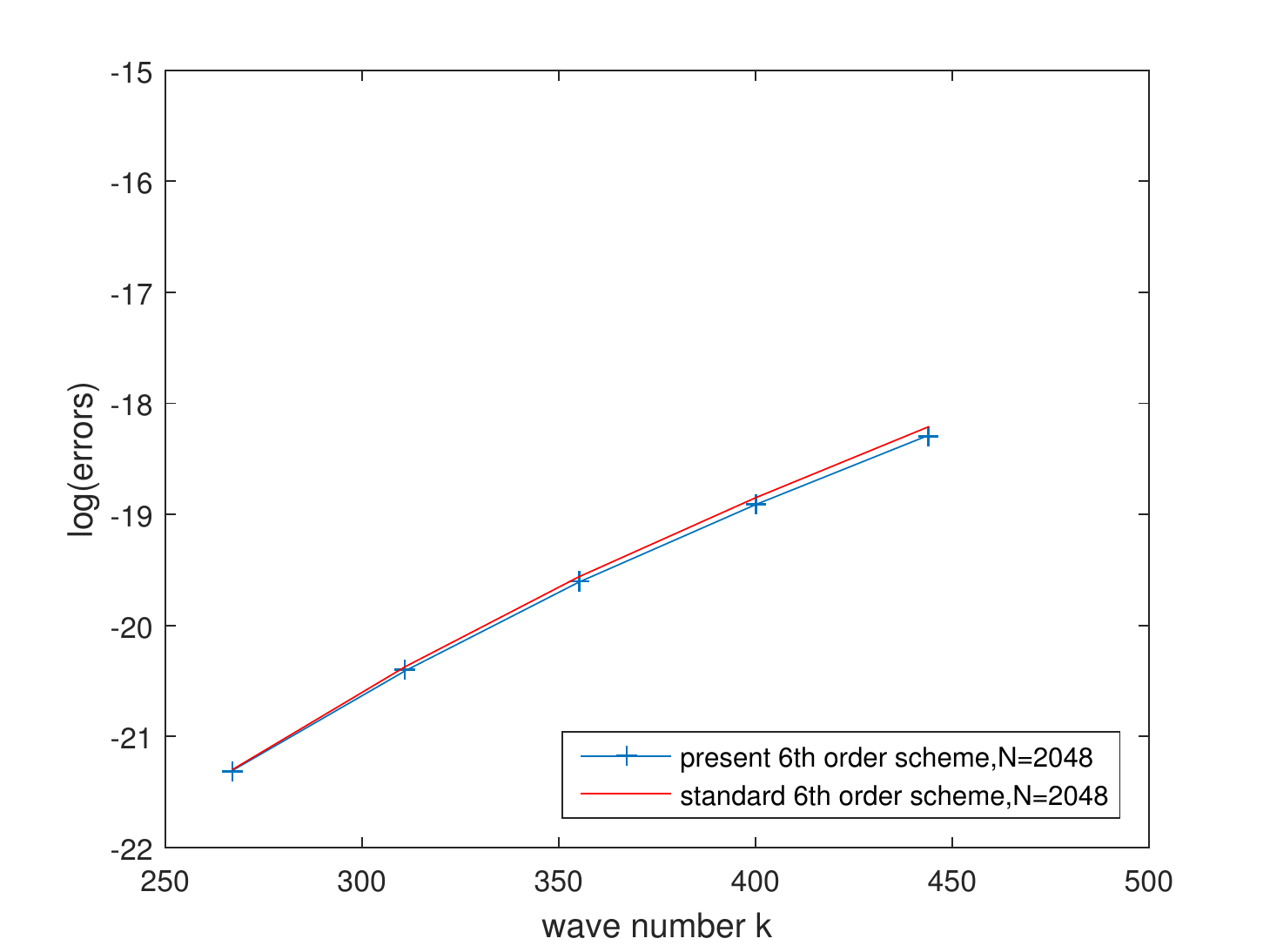}
		\subcaption{}
	\end{minipage}	
	\begin{minipage}{.5\textwidth}
		\centering
		\includegraphics[width=1.0\linewidth]{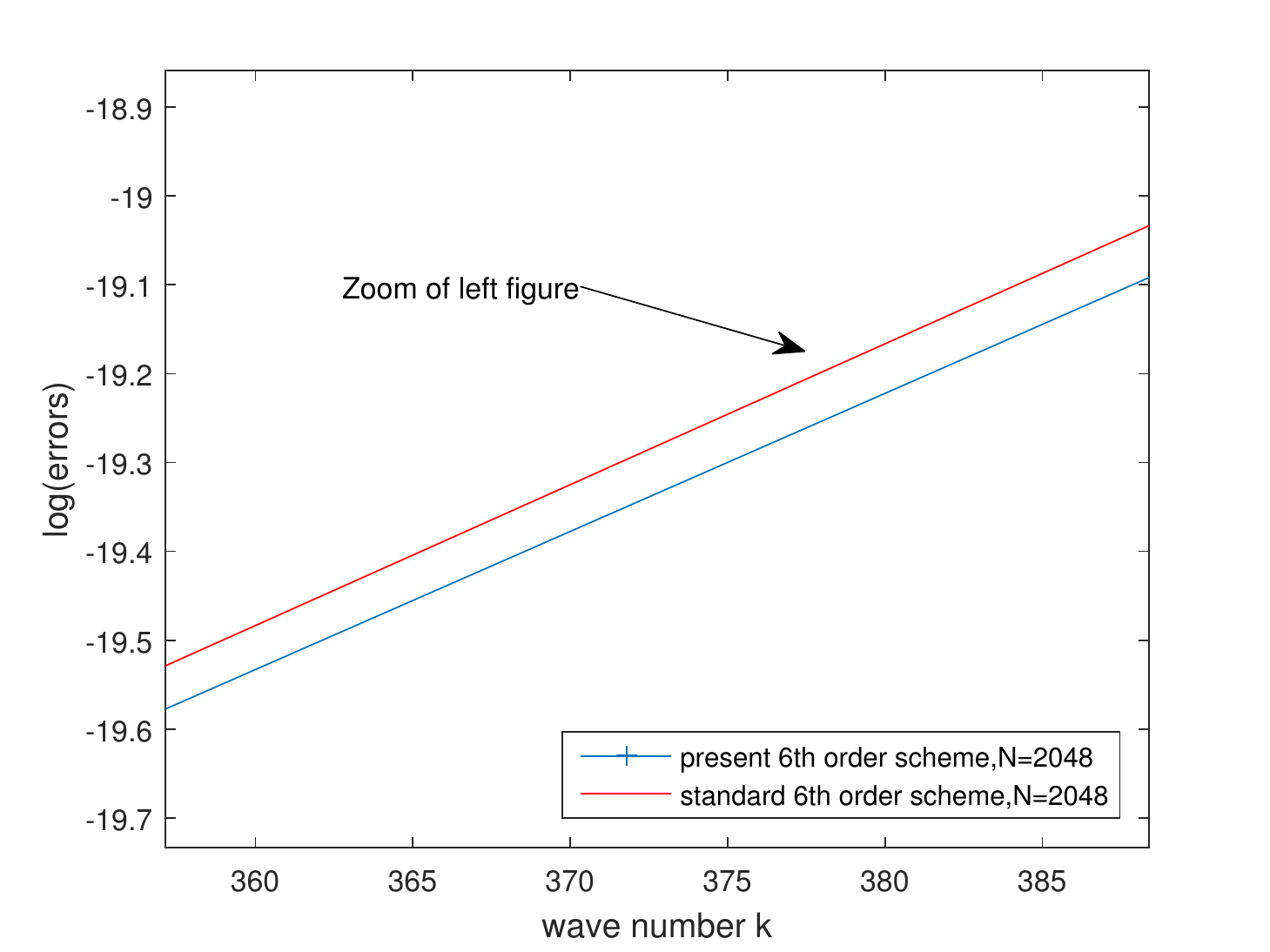}
		\subcaption{}
	\end{minipage}
	\caption{Numerical errors for N=2048 with different $K$ for Problem \ref{p6}.}
	\label{fig:errorpr6}
\end{figure}
\begin{figure}
	\centering
	\includegraphics[scale=0.4]{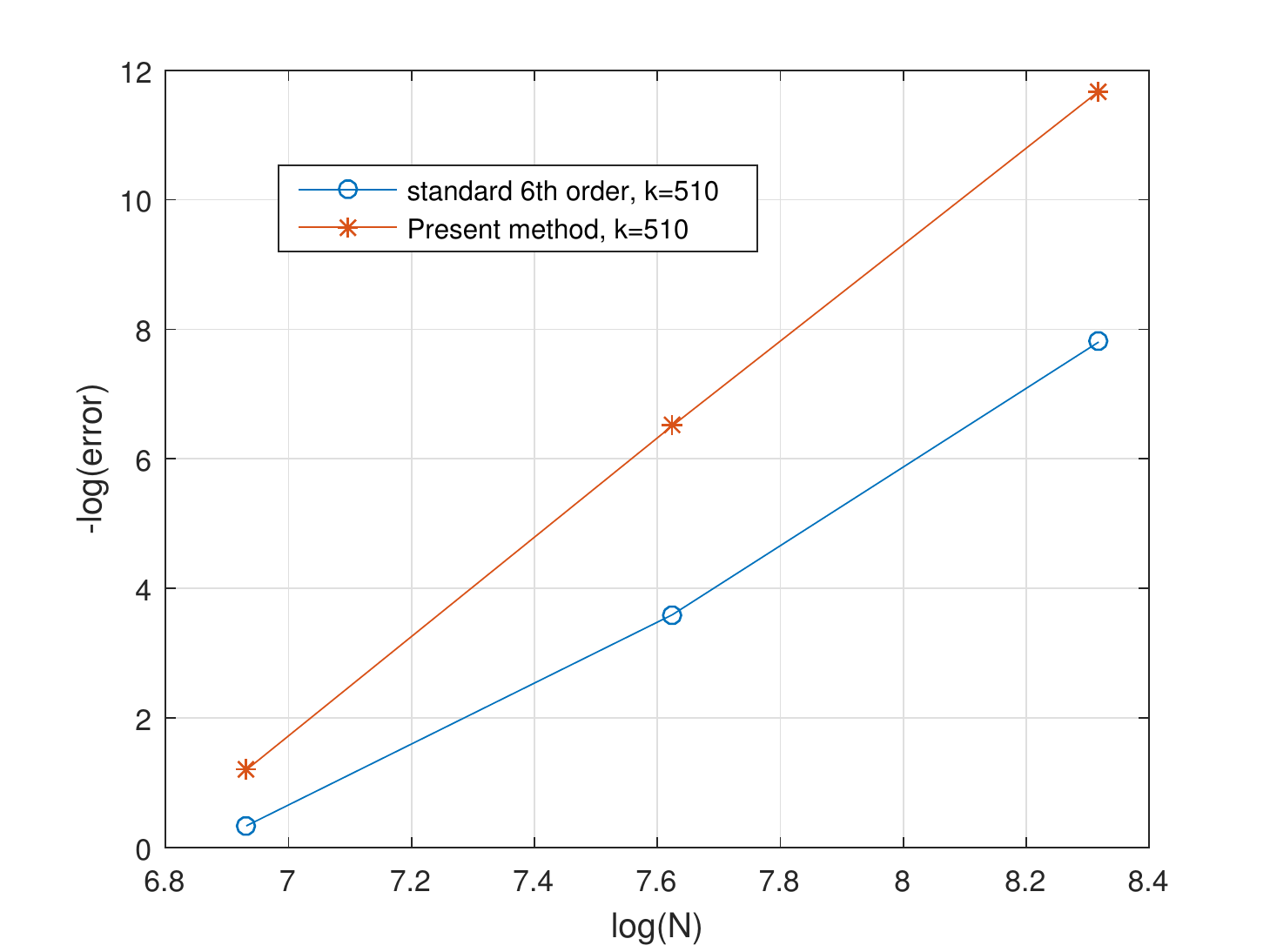}
	\caption{Accuracy of the new scheme with large $K$ for Problem \ref{p6}.}
	\label{fig:orderpr6}	
\end{figure}
\begin{figure}[]
	\begin{minipage}{.5\textwidth}
		\centering
		\includegraphics[width=1.0\linewidth]{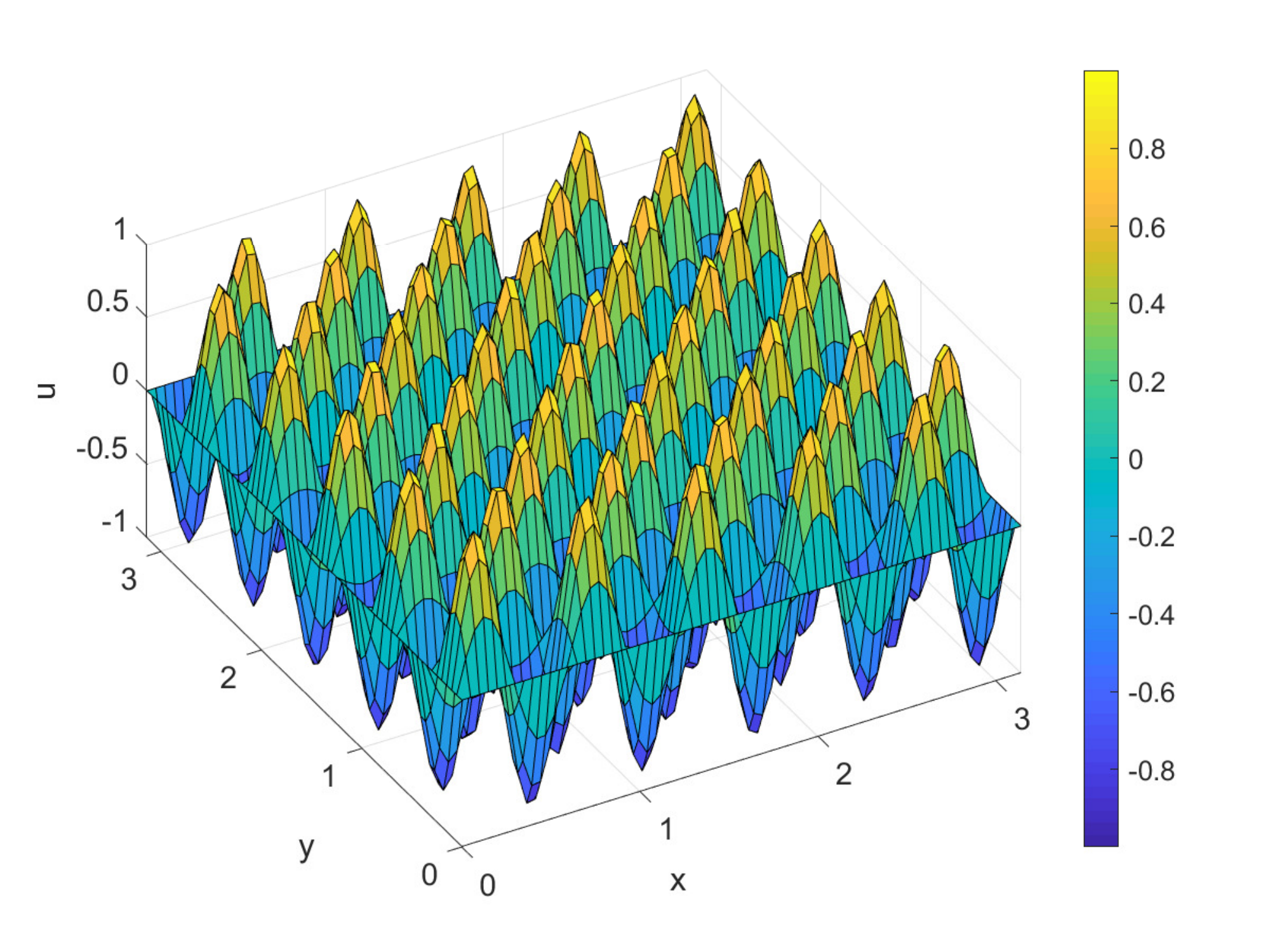}
		\subcaption{}
	\end{minipage}	
	\begin{minipage}{.5\textwidth}
		\centering
		\includegraphics[width=1.0\linewidth]{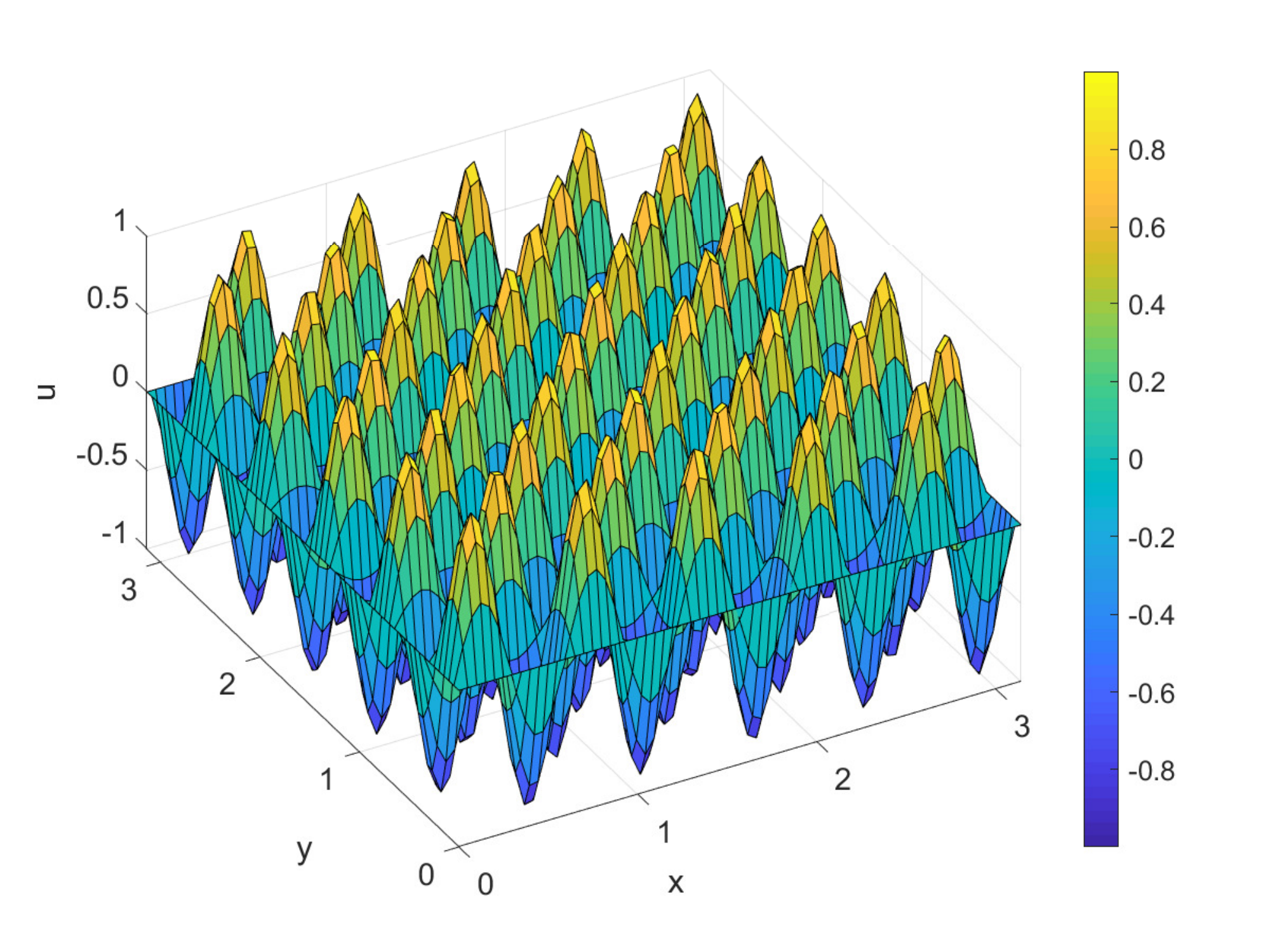}
		\subcaption{}
	\end{minipage}
	\caption{(a) Exact, (b) Numerical solution with $N=64, l=m=10, K=14$: Problem \ref{p6}.}
	\label{fig:2Dsolex4}
\end{figure}
\subsection{Three-dimensional test cases}
For the three dimensional case, three test cases are presented for the Helmholtz equation in three dimensions. Hybrid BiCG method is used to solve system of equations. Results obtained from the new scheme (\ref{eq3D28}) are compared with the sixth order scheme \cite{sutmann2007compact} given by (\ref{eq3D17}) using some error norms and order of convergence. The formulae (\ref{MAE2D})-(\ref{Order2D}) can be extended in three dimensions to calculate the error norms and order of convergence for the difference schemes. Graphically, it is also shown that the numerical errors are developed with increasing the wave numbers $K$. The exact and numerical solutions for the corresponding difference schemes are also plotted at the given grids.
\begin{problem}\label{p1:3D}
	We extended the test problem from Fu\cite{fu2008compact} into three dimensions with the analytic solution as
	\begin{equation*}
	u(x,y,z)=\sin\pi x\sin\pi y\sin l\pi z+\frac{\sin\pi x\sin\pi y\sin\pi z}{l^2-1}, (x,y,z)\in\Omega,
	\end{equation*}
	and the source function as
	\begin{equation*}
	f(x,y,z)=\pi^2\sin\pi x\sin\pi y\sin\pi z, (x,y,z)\in\Omega,
	\end{equation*}
	where $\Omega=[0,1]\times[0,1]\times[0,1/2]$, $K^2=\pi^2(2+l^2)$, $l$ is odd number. The Dirichlet boundary conditions can be obtained from the analytic solution.
\end{problem}
$l_2$-error and $l_{\infty}$-error norms for the new scheme (\ref{eq3D28}) and the scheme (\ref{eq3D17}) are given in Tables \ref{tab1:3D}-\ref{tab5:3D} for wave numbers $K\approx 29, 204$. In Figure \ref{fig:3D1}, numerical errors are also plotted for both the scheme with small and very large wave numbers. For this test case, iteration stops when the residual error norm is reduced by $1.0e-11$.

From  tables and figures, it is noted that the numerical errors are developed for both the schemes however new scheme (\ref{eq3D28}) shows more accuracy than that of the scheme (\ref{eq3D17}). Further, it is also noted that the source function, in this test case, does not depend upon the wave number $K$ and therefore from Table \ref{tab5:3D}, it is observed that the new scheme is highly accurate for very large wave number. The solution values for both the exact and numerical solution values are plotted in Figure \ref{fig:3Dsolex1} for the Problem \ref{p1:3D}.
\begin{table}[]\caption{Error norms to the solution values of Problem \ref{p1:3D} for $l=9, K\approx 29$.}\label{tab1:3D}
	\centering
	\vspace{0.1cm}
	\begin{tabular}{{c c c c c c c c c c}}
		\hline
		\multirow{2}{*}{$1/h$} &\multicolumn{3}{c}{new scheme (\ref{eq3D28})}&&\multicolumn{3}{c}{6th order scheme (\ref{eq3D17})}\\[0.05cm]
		\hhline{~---~---}& $||\epsilon||_{\infty}$  & $||\epsilon||_2$ & $\Theta_{\infty}$& & $||\epsilon||_{\infty}$ & $||\epsilon||_2$ & $\Theta_{\infty}$ \\[0.05cm]
		\hline
		\vspace{0.1cm}
		16 	& 5.20e-03 & 1.42e-03  & -Inf && 1.88e-02 & 5.13e-03  & -Inf\\[0.05cm]
		32 	& 2.69e-05 & 6.89e-06  & 7.59 && 1.87e-04 & 4.78e-05  & 6.65\\[0.05cm]
		64 	& 2.95e-07 & 7.06e-08  & 6.51 && 2.73e-06 & 6.55e-07  & 6.09\\[0.05cm]
		128 & 4.15e-09 & 9.70e-10  & 6.15 && 4.20e-08 & 9.83e-09  & 6.02\\
		\hline    	  
	\end{tabular}
\end{table}
\begin{table}[]\caption{Error norms to the solution values of Problem \ref{p1:3D} for $l=65, K\approx 204$.}\label{tab5:3D}
	\centering
	\vspace{0.1cm}
	\begin{tabular}{{c c c c c c c c c c}}
		\hline
		\multirow{2}{*}{$1/h$} &\multicolumn{3}{c}{new scheme (\ref{eq3D28})}&&\multicolumn{3}{c}{6th order scheme (\ref{eq3D17})}\\[0.05cm]
		\hhline{~---~---}& $||\epsilon||_{\infty}$  & $||\epsilon||_2$ & $\Theta_{\infty}$& & $||\epsilon||_{\infty}$ & $||\epsilon||_2$ & $\Theta_{\infty}$ \\[0.05cm]
		\hline
		\vspace{0.1cm}
		64 & 9.06e-01 & 3.27e-01  & -Inf && 9.39e-01 & 3.39e-01 & -Inf\\[0.05cm]
		128& 1.04e-02 & 2.21e-03  & 6.44 && 6.32e-02 & 1.33e-02 & 3.89\\[0.05cm]
		256& 2.59e-05 & 5.43e-06  & 8.66 && 7.07e-04 & 1.48e-04 & 6.48\\[0.05cm]
		512& 2.49e-08 & 5.20e-09  & 10.02&& 1.02e-05 & 2.14e-06 & 6.11\\
		\hline    	  
	\end{tabular}
\end{table}

\begin{figure}[]
	\begin{minipage}{.5\textwidth}
		\centering
		\includegraphics[width=1.0\linewidth]{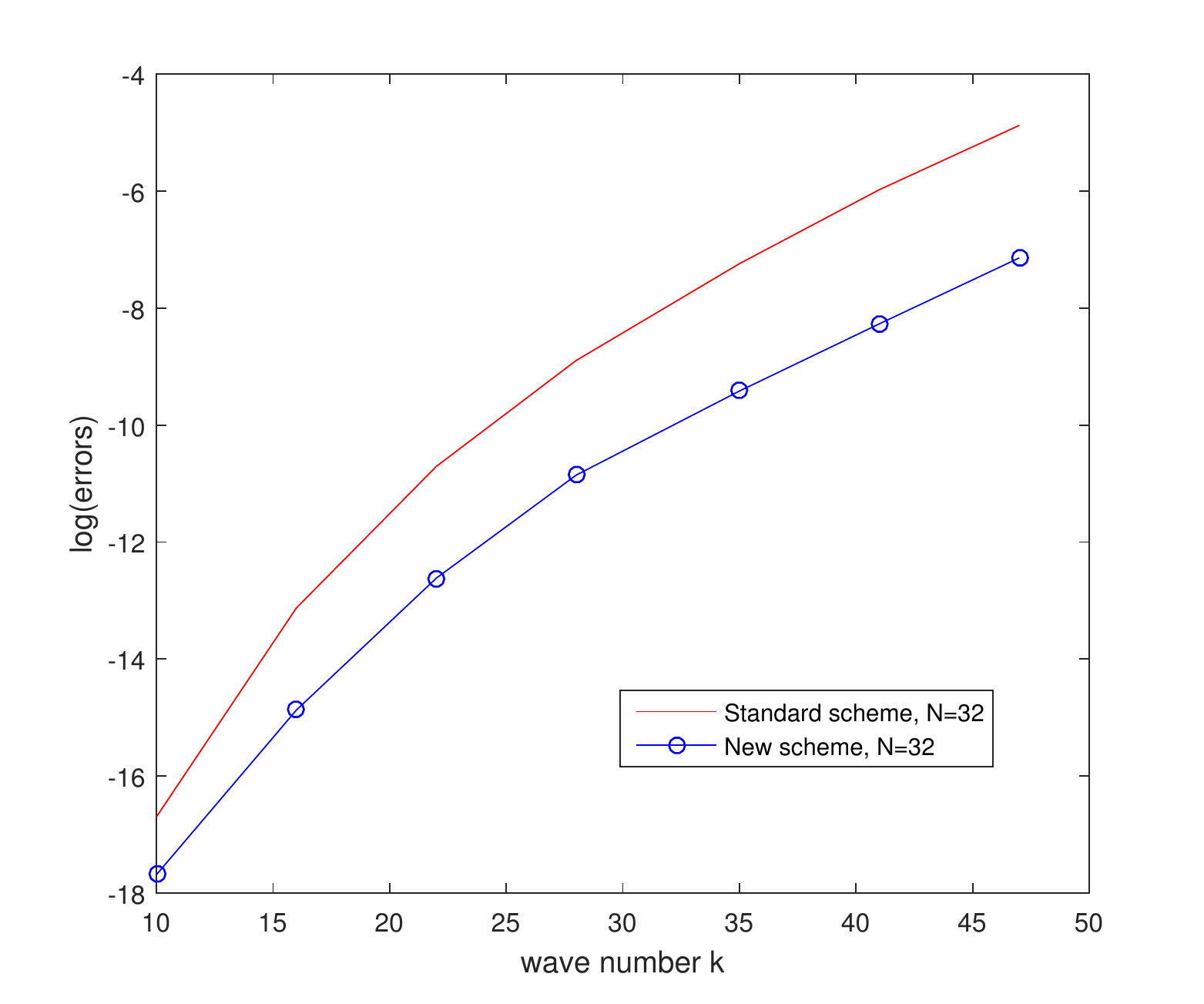}
		\subcaption{}
	\end{minipage}	
	\begin{minipage}{.5\textwidth}
		\centering
		\includegraphics[width=1.0\linewidth]{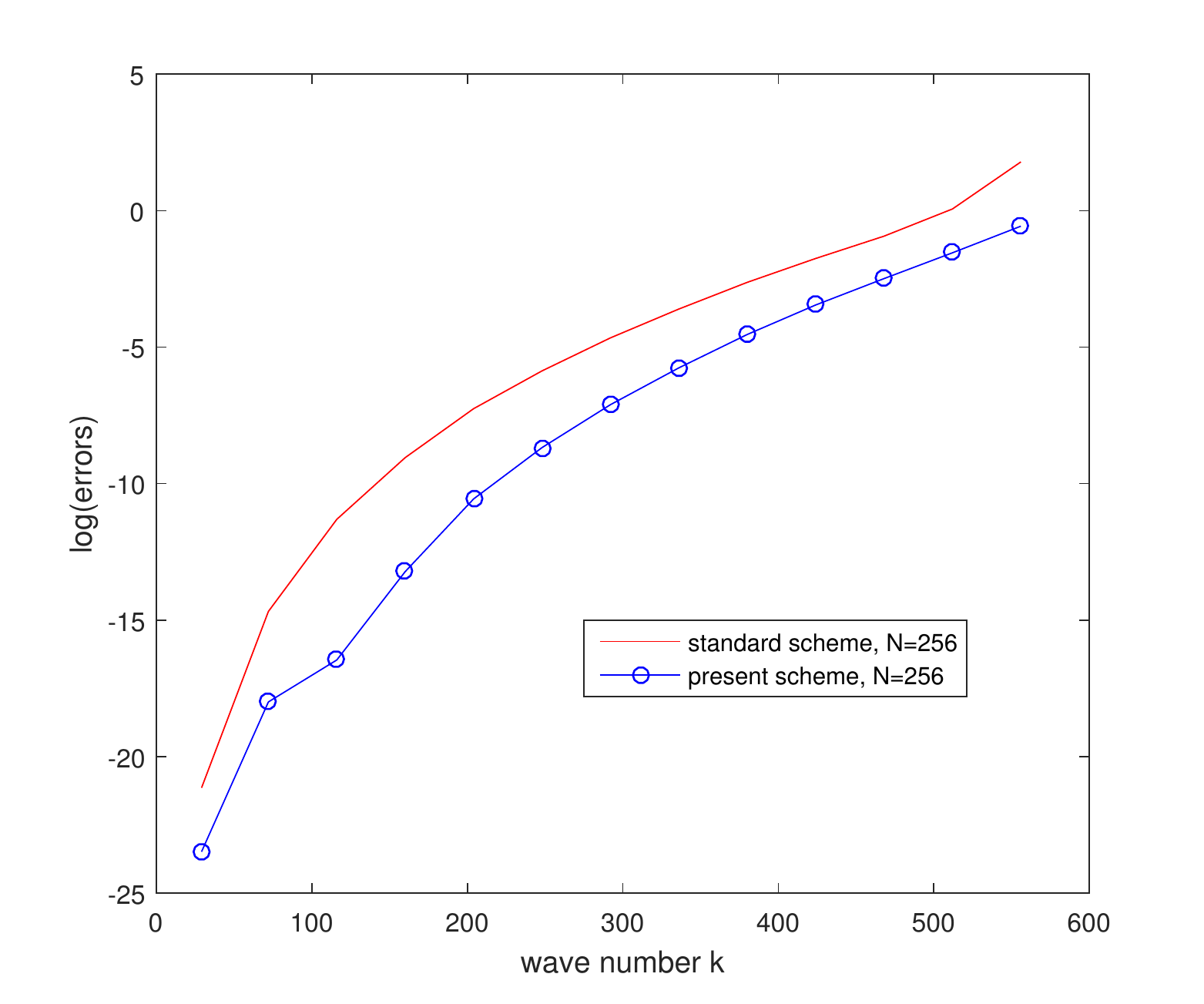}
		\subcaption{}
	\end{minipage}
	\caption{Numerical errors for (a) N=32, (b) N=256 with different $K$ for Problem \ref{p1:3D}.}
	\label{fig:3D1}
\end{figure}
\begin{figure}[]
	\begin{minipage}{.5\textwidth}
		\centering
		\includegraphics[width=1.0\linewidth]{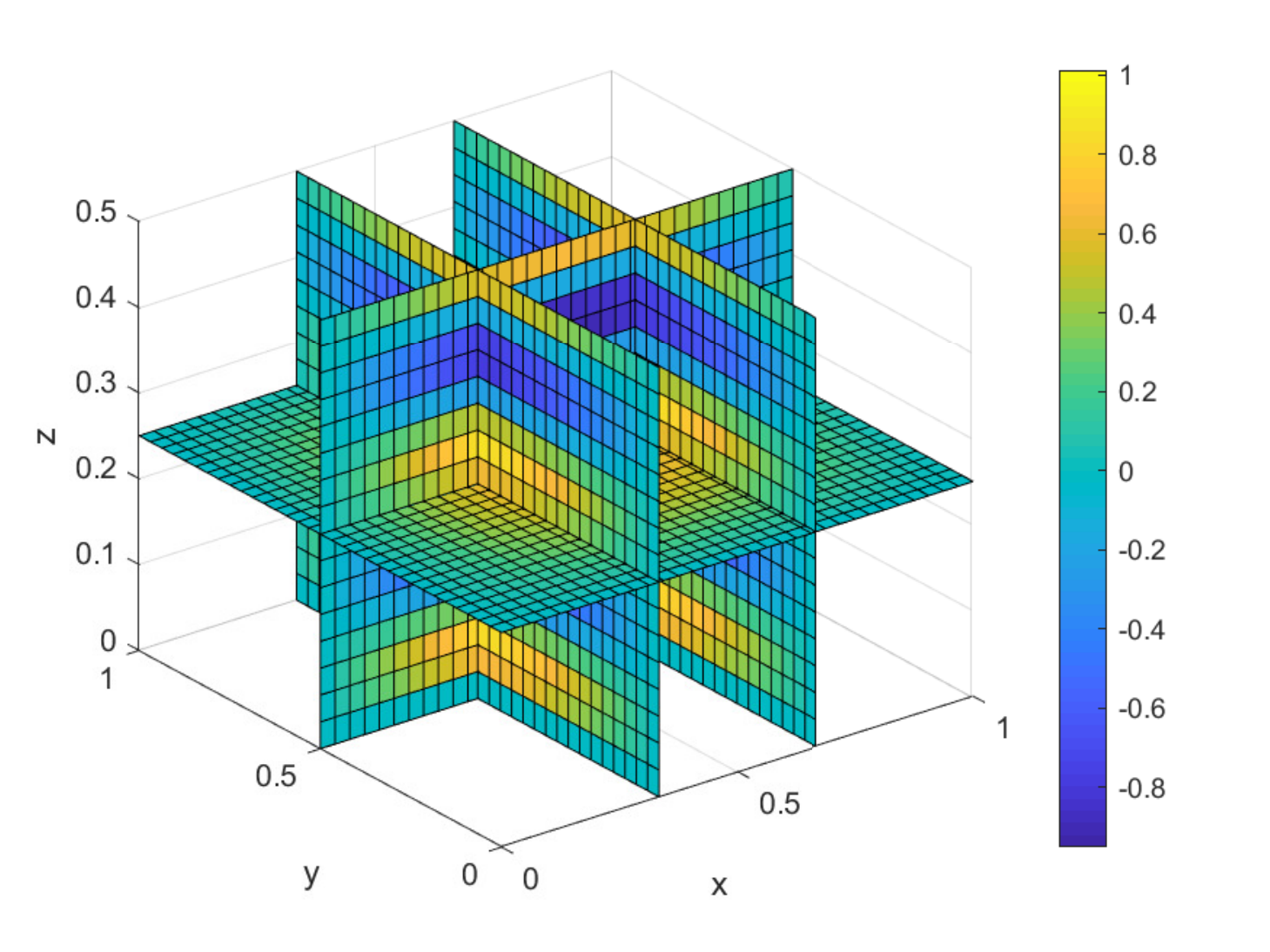}
		\subcaption{}
	\end{minipage}	
	\begin{minipage}{.5\textwidth}
		\centering
		\includegraphics[width=1.0\linewidth]{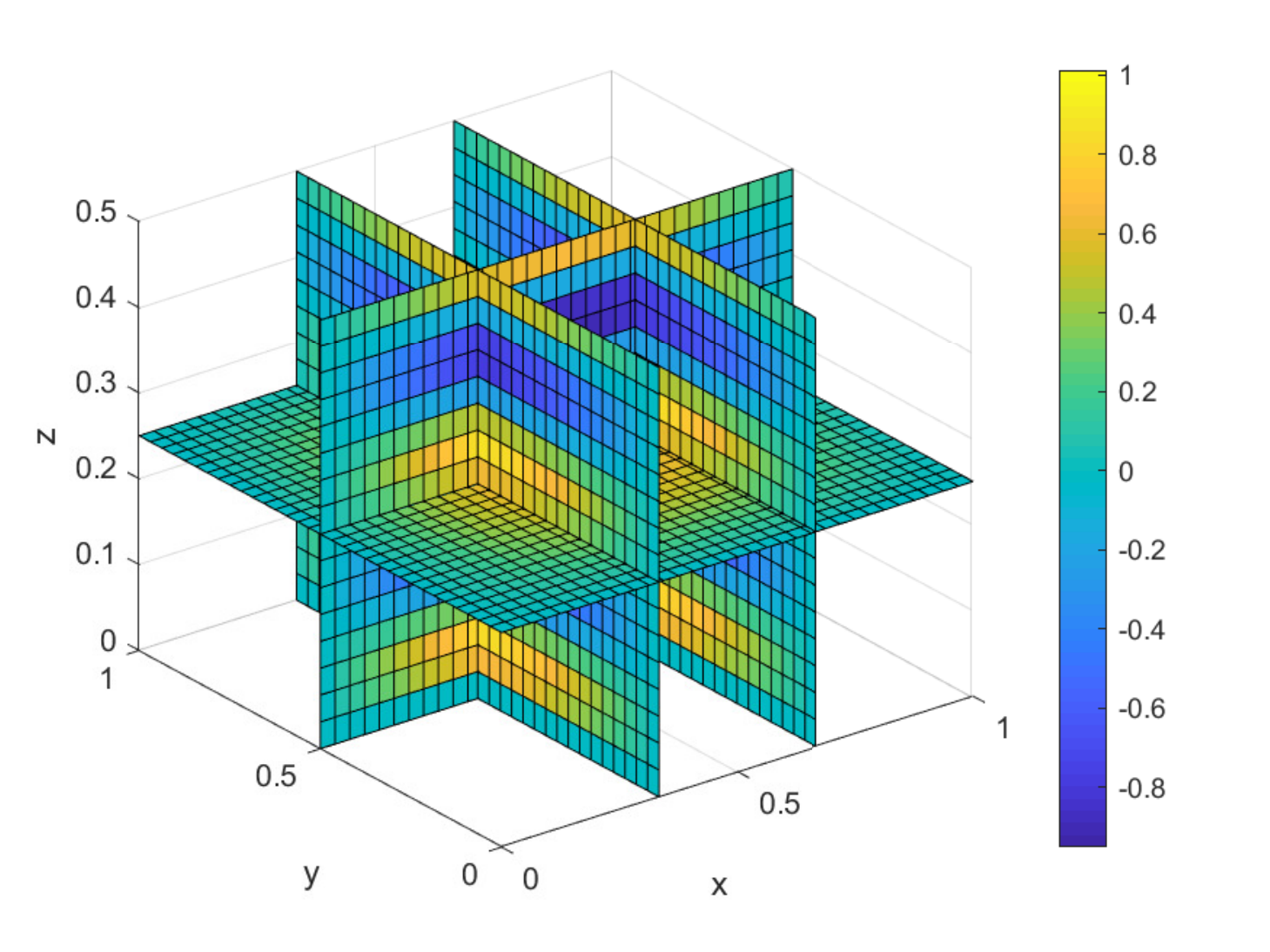}
		\subcaption{}
	\end{minipage}
	\caption{(a) Exact and (b) Numerical solution for $N=32, l=9, K=29$: Problem \ref{p1:3D}.}
	\label{fig:3Dsolex1}
\end{figure}
\begin{problem}\label{p2:3D}
	Consider the Neumann problem from \cite{nabavi2007new}, which is extended into three dimensions as follows
	\begin{equation*}
	(\partial_x^2+\partial_y^2+\partial_z^2)u+K^2u(x,y,z)=(K^2-3\pi^2)\cos(\pi x)\sin(\pi y)\sin(\pi z), (x,y,z)\in \Omega,
	\end{equation*}
	with the Neumann boundary conditions
	\begin{equation*}
	\begin{split}
	\partial_xu|_{x=0}=0, u(1,y,z)=-\sin(\pi y)\sin(\pi z),0\leq y,z\leq 1,\\
	u(x,0,z)=u(x,1,z)=0, 0\leq x,z\leq 1,
	u(x,y,0)=u(x,y,1)=0, 0\leq x,y\leq 1,
	\end{split}
	\end{equation*}
	where $\Omega=[0,1]^3$ is a cubic domain. The analytic solution for the problem is given by
	\begin{equation*}
	u(x,y,z)=\cos(\pi x)\sin(\pi y)\sin(\pi z).
	\end{equation*}	
\end{problem}
In order to get the sixth order accuracy, it is used sixth order approximation (\ref{NBC9}) for the Neumann boundary. Maximum error norms for both the schemes (\ref{eq3D28}) and (\ref{eq3D17}) are given in Table \ref{tab2:3D} for different grids. For this test case, the tolerance for the iteration stopping criteria is taken $1.0e-13$. Table shows that the scheme (\ref{eq3D28}) maintains its sixth order accuracy and it is also more accurate than (\ref{eq3D17}). The exact and numerical solution values for the mesh grids $N=16$ and wave number $K=10$ are plotted in Figure \ref{fig:3Dsolex2} for the Problem \ref{p2:3D}.
\begin{table}[]\caption{Maximum error norms to the solution values of Problem \ref{p2:3D} for $K= 50$.} \label{tab2:3D}
	\centering
	\vspace{0.1cm}
	\begin{tabular}{{c c c c c c c c c c}}
		\hline
		\multirow{2}{*}{$1/h$} &\multicolumn{2}{c}{new scheme (\ref{eq3D28})}&&\multicolumn{2}{c}{6th order scheme (\ref{eq3D17})}\\[0.05cm]
		\hhline{~--~---}& $||\epsilon||_{\infty}$ & $\Theta_{\infty}$& & $||\epsilon||_{\infty}$ & $\Theta_{\infty}$ \\[0.05cm]
		\hline
		\vspace{0.1cm}
		16 	& 4.42e-07  & -Inf && 1.02e-06  &  -Inf\\[0.05cm]
		32 	& 4.60e-09  & 6.58 && 9.92e-09  & 6.68\\[0.05cm]
		64 	& 5.70e-11  & 6.34 && 1.20e-10  & 6.37\\[0.05cm]
		128	& 8.31e-13  & 6.10 && 2.49e-12  & 5.59\\
		\hline    	  
	\end{tabular}
\end{table}
\begin{figure}[]
	\begin{minipage}{.5\textwidth}
		\centering
		\includegraphics[width=1.0\linewidth]{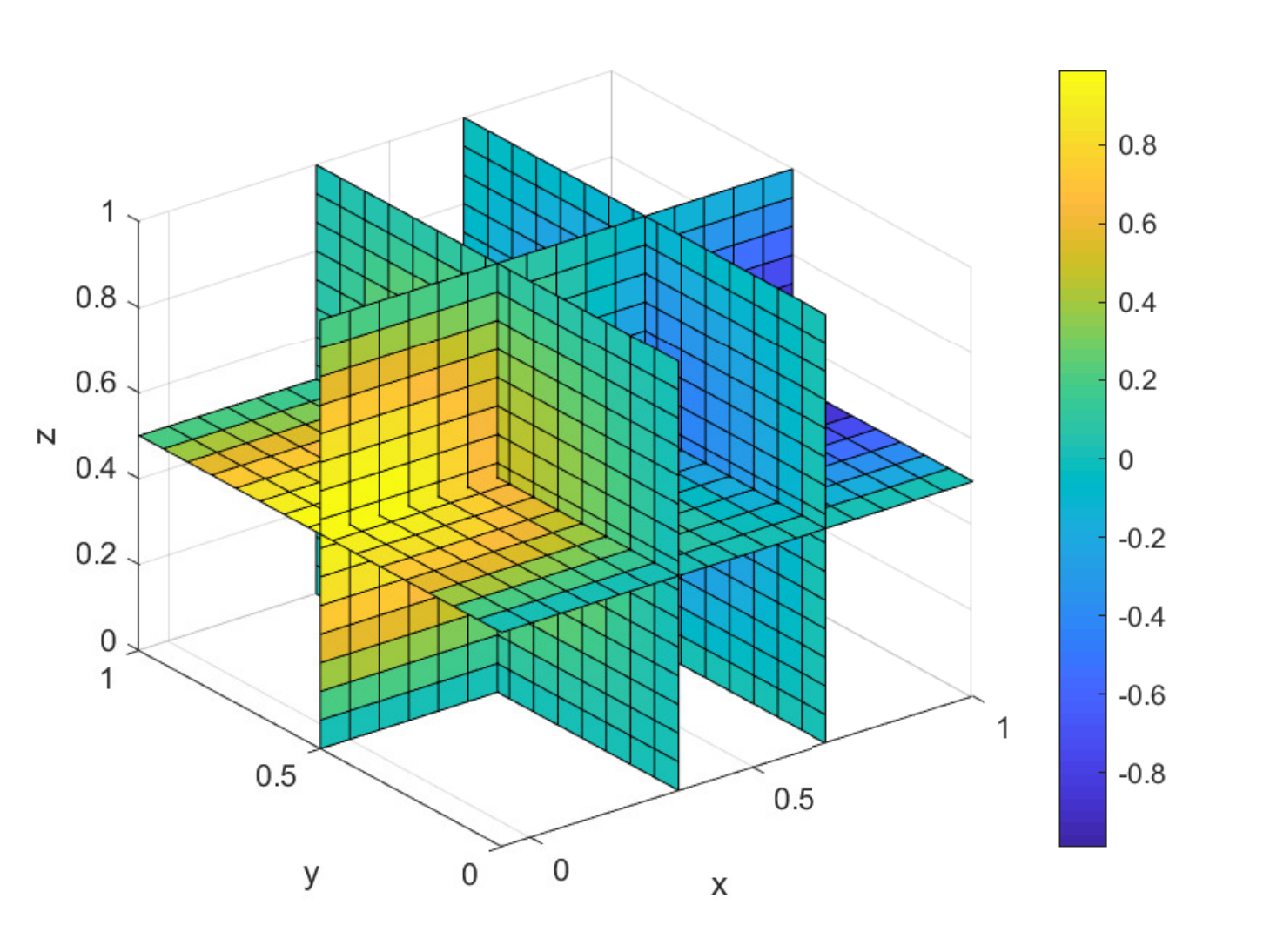}
		\subcaption{}
	\end{minipage}	
	\begin{minipage}{.5\textwidth}
		\centering
		\includegraphics[width=1.0\linewidth]{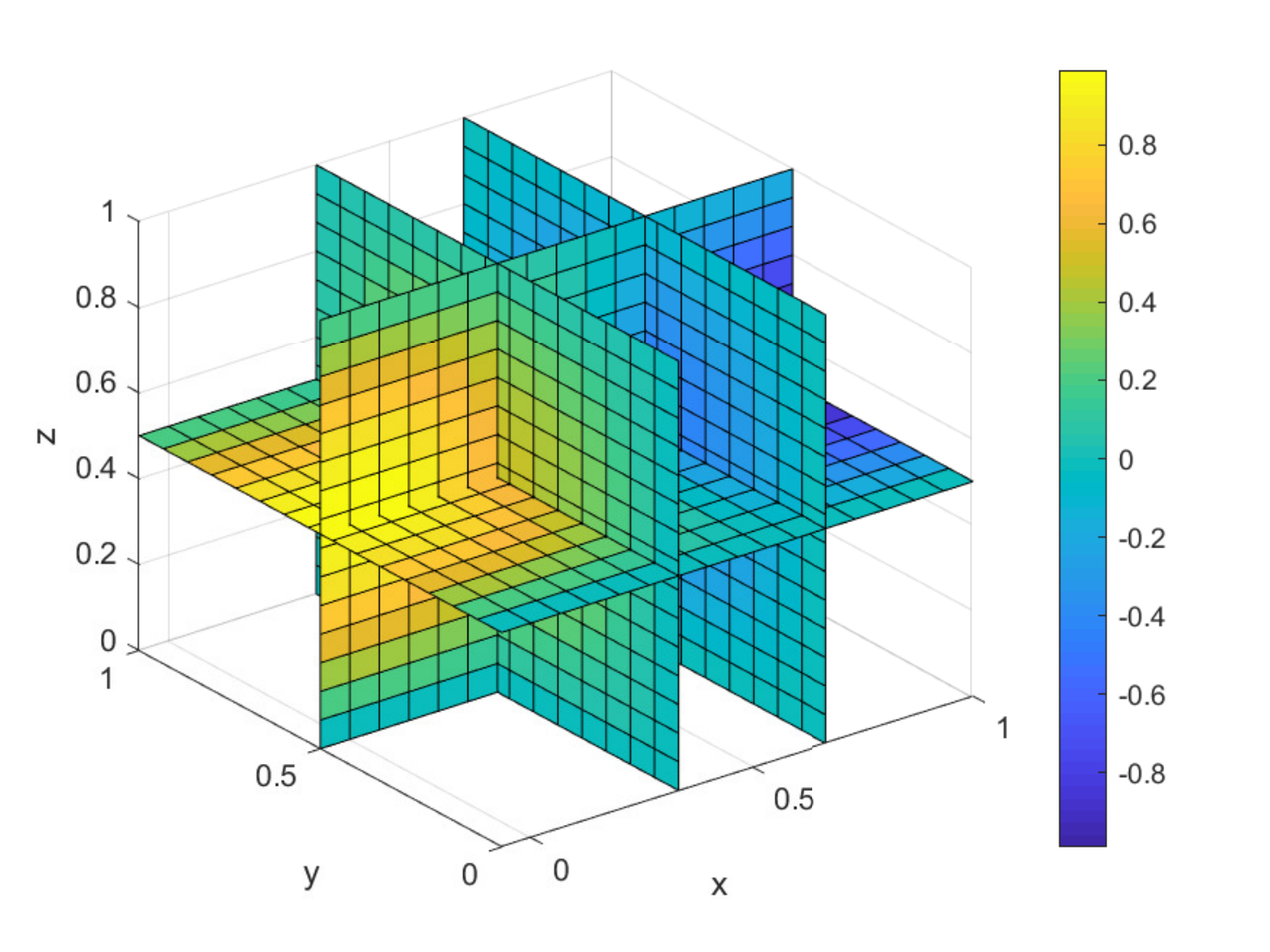}
		\subcaption{}
	\end{minipage}
	\caption{(a) Exact and (b) Numerical solution with $N=16, K=10$ for Problem \ref{p2:3D}.}
	\label{fig:3Dsolex2}
\end{figure}
\begin{problem}\label{p3:3D}
	We consider third 3D test problem with the following analytical solution for the acoustical field \cite{marin2005meshless}
	\begin{equation*}
	u(x,y,z)=\cos(\zeta_1x+\zeta_2y+\zeta_3z),
	\end{equation*}
	for which the source function is given as
	\begin{equation*}
	f(x,y,z)=(\partial_x^2+\partial_y^2+\partial_z^2)u+K^2u=
	(K^2-\zeta_1^2-\zeta_2^2-\zeta_3^2)\cos(\zeta_1x+\zeta_2y+\zeta_3z).
	\end{equation*}
	The Dirichlet Boundary conditions on all sides of the cubic domain $\Omega=\left[-\dfrac12,\dfrac12\right]^3$ can be calculated from the analytic solution.
\end{problem}
Table \ref{tab4:3D} contains $l_2$-error and maximum error norms for the schemes (\ref{eq3D28}) and (\ref{eq3D17}) for the Problem \ref{p3:3D} with the wave number $K= 12$, and parameter values $\zeta1=6, \zeta2=8, \zeta3=7$. Table \ref{tab4:3D} justifies the accuracy of the new scheme in comparison to the scheme (\ref{eq3D17}). The tolerance for the residual error norm is taken $1.0e-12$ for this test case. The exact and numerical solution values are also plotted in Figure \ref{fig:3Dsolex3} for the Problem \ref{p3:3D}.
\begin{table}[]\caption{Error norms to the solution values of Problem \ref{p3:3D} for $K= 12, \zeta1=6, \zeta2=8, \zeta3=7$.} \label{tab4:3D}
	\centering
	\vspace{0.1cm}
	\begin{tabular}{{c c c c c c c c c c}}
		\hline
		\multirow{2}{*}{$1/h$} &\multicolumn{3}{c}{new scheme (\ref{eq3D28})}&&\multicolumn{3}{c}{6th order scheme (\ref{eq3D17})}\\[0.05cm]
		\hhline{~---~---}& $||\epsilon||_{\infty}$& $||\epsilon||_2$ & $\Theta_{\infty}$& & $||\epsilon||_{\infty}$& $||\epsilon||_2$& $\Theta_{\infty}$ \\[0.05cm]
		\hline
		\vspace{0.1cm}
		8 	& 7.44e-03 & 2.63e-03  & -Inf && 3.26e-02 & 1.16e-02  & -Inf\\[0.05cm]
		16 	& 4.83e-05 & 1.18e-05  & 7.27 && 2.98e-04 & 7.26e-05  & 6.78\\[0.05cm]
		32 	& 5.30e-07 & 1.21e-07  & 6.51 && 3.62e-06 & 8.23e-07  & 6.36\\[0.05cm]
		64 	& 7.24e-09 & 1.60e-09  & 6.19 && 5.07e-08 & 1.12e-08  & 6.16\\[0.05cm]
		128	& 1.08e-10 & 2.33e-11  & 6.06 && 7.54e-10 & 1.64e-10  & 6.07\\
		\hline    	  
	\end{tabular}
\end{table}
\begin{figure}[]
	\begin{minipage}{.5\textwidth}
		\centering
		\includegraphics[width=1.0\linewidth]{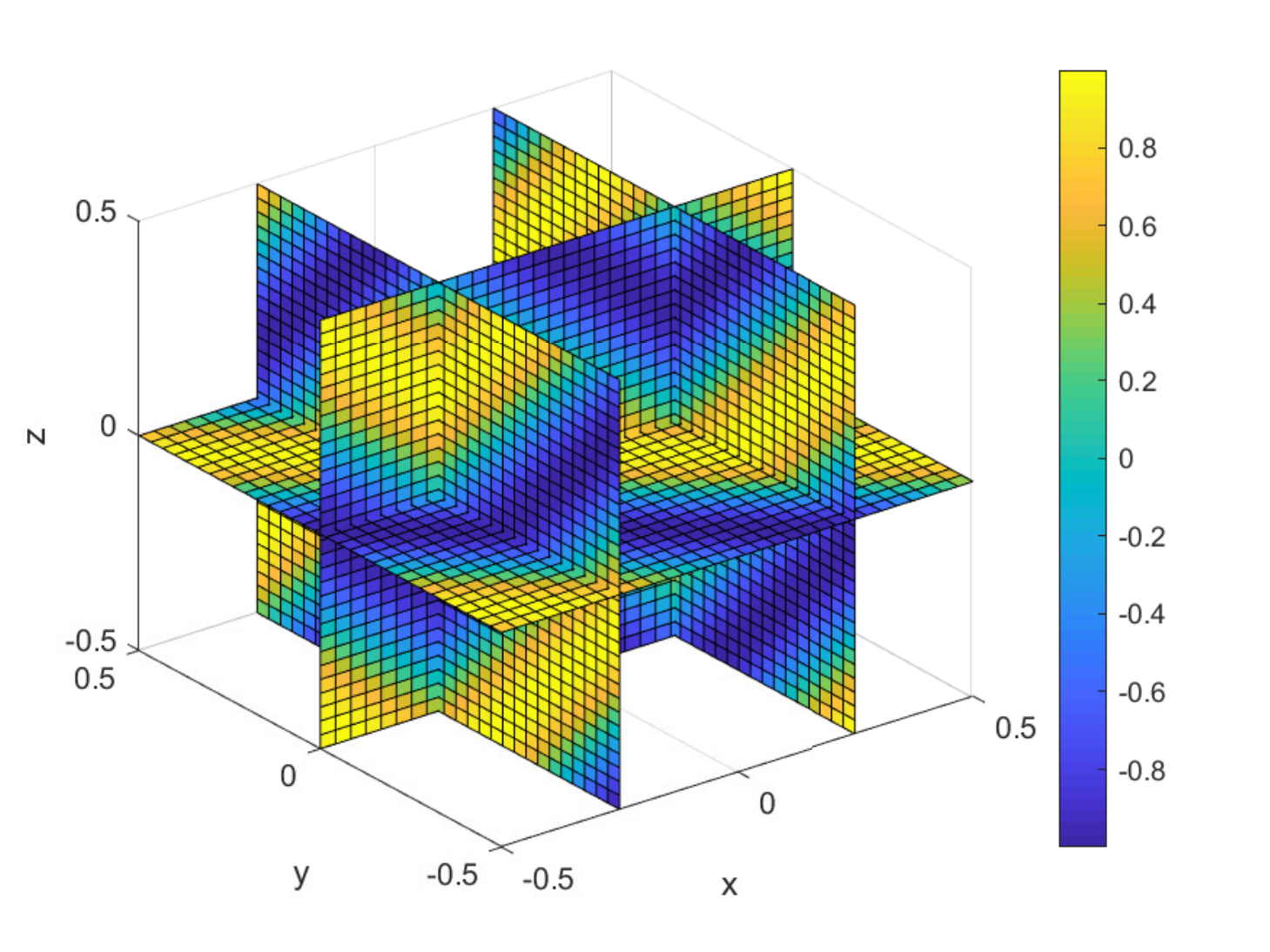}
		\subcaption{}
	\end{minipage}	
	\begin{minipage}{.5\textwidth}
		\centering
		\includegraphics[width=1.0\linewidth]{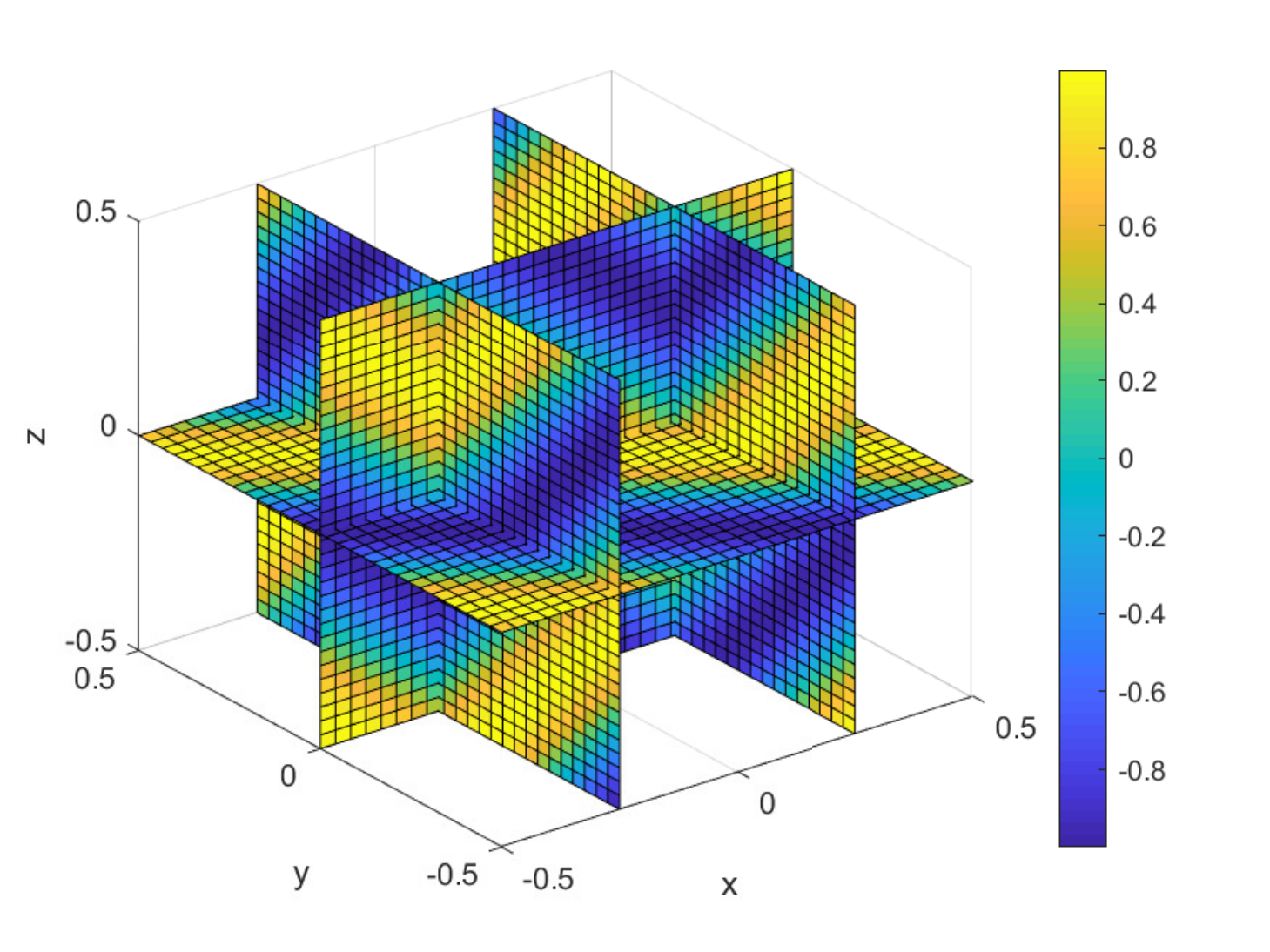}
		\subcaption{}
	\end{minipage}
	\caption{(a)Exact (b)Numerical solution: $N=32, K= 12, \zeta1=6, \zeta2=8, \zeta3=7$:Prob \ref{p3:3D}.}
	\label{fig:3Dsolex3}
\end{figure}
\section{Conclusions}
\label{sec:5}
A new compact sixth-order accurate finite difference scheme for the two and three-dimensional Helmholtz equation is presented. The leading truncation error term of the new scheme does not explicitly depend on the wave number. Thus the new scheme also works for problems with very large wave number $K$. It is also shown that the new scheme is uniquely solvable for sufficiently small $Kh$. Theoretically, it is proved that the bound of the error norm is explicitly independent of the wave number $K$ for the new scheme. Required symbolic derivation is performed using MAPLE with the help of mtaylor command for multi-dimensional Taylor series expansion. The resulting system of equations obtained from difference scheme are solved using BiCGstab(2) iterative method. The new scheme is tested to some model problems governed by the two and three-dimensional Helmholtz equations.  Comparison of the new scheme is done with the standard sixth-order schemes \cite{nabavi2007new,sutmann2007compact}. From the results it is shown that the new scheme is highly accurate for very high wave number. This approach can be extended to derive high-order difference schemes for the two and three dimensional problems with variable wave numbers.
\section*{Acknowledgements}
The research work to the first author is supported by SRM Institute of Science and Technology Kattankulathur, Tamil Nadu India. The authors also acknowledge the SERB, New Delhi, India for the financial support towards computational facility through project file No. MTR/2017/000187. 

\bibliographystyle{unsrt} 
\bibliography{mybibfile}

\begin{thebibliography}{10}

\bibitem{fang2002finite}
Qing Fang, Takuya Tsuchiya, and Tetsuro Yamamoto.
\newblock Finite difference, finite element and finite volume methods applied
  to two-point boundary value problems.
\newblock {\em Journal of Computational and Applied Mathematics}, 139(1):9--19,
  2002.

\bibitem{peiro2005finite}
Joaquim Peir{\'o} and Spencer Sherwin.
\newblock Finite difference, finite element and finite volume methods for
  partial differential equations.
\newblock In {\em Handbook of materials modeling}, pages 2415--2446. Springer,
  2005.

\bibitem{ames2014numerical}
William~F Ames.
\newblock {\em Numerical methods for partial differential equations}.
\newblock Academic press, 2014.

\bibitem{bieniasz2008two}
Leslaw~K Bieniasz.
\newblock Two new compact finite-difference schemes for the solution of
  boundary value problems in second-order non-linear ordinary differential
  equations, using non-uniform grids.
\newblock {\em Journal of Computational Methods in Sciences and Engineering},
  8(1, 2):3--18, 2008.

\bibitem{chawla1979sixth}
M.~M. Chawla.
\newblock A sixth-order tridiagonal finite difference method for general
  non-linear two-point boundary value problems.
\newblock {\em IMA Journal of Applied Mathematics}, 24(1):35--42, 1979.

\bibitem{nabavi2007new}
Majid Nabavi, M.~H.~Kamran Siddiqui, and Javad Dargahi.
\newblock A new 9-point sixth-order accurate compact finite-difference method
  for the helmholtz equation.
\newblock {\em Journal of Sound and Vibration}, 307(3):972--982, 2007.

\bibitem{harari1995accurate}
Isaac Harari and Eli Turkel.
\newblock Accurate finite difference methods for time-harmonic wave
  propagation.
\newblock {\em Journal of Computational Physics}, 119(2):252--270, 1995.

\bibitem{shaw1988integral}
RP~Shaw.
\newblock Integral equation methods in acoustics.
\newblock {\em Boundary elements X}, 4:221--244, 1988.

\bibitem{mehdizadeh2003investigation}
Omid~Z Mehdizadeh and Marius Paraschivoiu.
\newblock Investigation of a two-dimensional spectral element method for
  helmholtz?s equation.
\newblock {\em Journal of Computational Physics}, 189(1):111--129, 2003.

\bibitem{harari1991finite}
Isaac Harari and Thomas~JR Hughes.
\newblock Finite element methods for the helmholtz equation in an exterior
  domain: model problems.
\newblock {\em Computer methods in applied mechanics and engineering},
  87(1):59--96, 1991.

\bibitem{sutmann2007compact}
Godehard Sutmann.
\newblock Compact finite difference schemes of sixth order for the helmholtz
  equation.
\newblock {\em Journal of Computational and Applied Mathematics},
  203(1):15--31, 2007.

\bibitem{singer1998high}
Ido Singer and Eli Turkel.
\newblock High-order finite difference methods for the helmholtz equation.
\newblock {\em Computer Methods in Applied Mechanics and Engineering},
  163(1-4):343--358, 1998.

\bibitem{ainsworth2004discrete}
M.~Ainsworth.
\newblock Discrete dispersion relation for hp-version finite element
  approximation at high wave number.
\newblock {\em SIAM Journal on numerical analysis}, 42(2):553--575, 2004.

\bibitem{babuska1997pollution}
Ivo~M Babuska and Stefan~A Sauter.
\newblock Is the pollution effect of the fem avoidable for the helmholtz
  equation considering high wave numbers?
\newblock {\em SIAM Journal on numerical analysis}, 34(6):2392--2423, 1997.

\bibitem{shen2005spectral}
Jie Shen and Li-Lian Wang.
\newblock Spectral approximation of the helmholtz equation with high wave
  numbers.
\newblock {\em SIAM journal on numerical analysis}, 43(2):623--644, 2005.

\bibitem{bayliss1985accuracy}
Alvin Bayliss, Charles~I Goldstein, and Eli Turkel.
\newblock On accuracy conditions for the numerical computation of waves.
\newblock {\em Journal of Computational Physics}, 59(3):396--404, 1985.

\bibitem{singer2006sixth}
I~Singer and Eli Turkel.
\newblock Sixth-order accurate finite difference schemes for the helmholtz
  equation.
\newblock {\em Journal of Computational Acoustics}, 14(03):339--351, 2006.

\bibitem{sutmann2006high}
Godehard Sutmann and Bernhard Steffen.
\newblock High-order compact solvers for the three-dimensional poisson
  equation.
\newblock {\em Journal of Computational and Applied Mathematics},
  187(2):142--170, 2006.

\bibitem{turkel2013compact}
Eli Turkel, Dan Gordon, Rachel Gordon, and Semyon Tsynkov.
\newblock Compact 2d and 3d sixth order schemes for the helmholtz equation with
  variable wave number.
\newblock {\em Journal of Computational Physics}, 232(1):272--287, 2013.

\bibitem{fu2008compact}
Yiping Fu.
\newblock Compact fourth-order finite difference schemes for helmholtz equation
  with high wave numbers.
\newblock {\em Journal of Computational Mathematics}, pages 98--111, 2008.

\bibitem{settle2013derivation}
Sean~O Settle, Craig~C Douglas, Imbunm Kim, and Dongwoo Sheen.
\newblock On the derivation of highest-order compact finite difference schemes
  for the one-and two-dimensional poisson equation with dirichlet boundary
  conditions.
\newblock {\em SIAM Journal on Numerical Analysis}, 51(4):2470--2490, 2013.

\bibitem{Thomas2013}
James~William Thomas.
\newblock {\em Numerical partial differential equations: conservation laws and
  elliptic equations}, volume~33.
\newblock Springer Science \& Business Media, 2013.

\bibitem{young2014iterative}
David~M Young.
\newblock {\em Iterative solution of large linear systems}.
\newblock Elsevier, 2014.

\bibitem{hackbusch1994iterative}
Wolfgang Hackbusch.
\newblock {\em Iterative solution of large sparse systems of equations},
  volume~95.
\newblock Springer, 1994.

\bibitem{varga2009matrix}
Richard~S Varga.
\newblock {\em Matrix iterative analysis}, volume~27.
\newblock Springer Science \& Business Media, 2009.

\bibitem{saad2003iterative}
Yousef Saad.
\newblock {\em Iterative methods for sparse linear systems}.
\newblock SIAM, 2003.

\bibitem{henrici1962discrete}
Peter Henrici.
\newblock {\em Discrete variable methods in ordinary differential equations.}
\newblock John Wiley \& Sons, Inc., Interscience Publishers Inc., 1962.

\bibitem{meyer2000matrix}
Carl~D Meyer.
\newblock {\em Matrix analysis and applied linear algebra}, volume~71.
\newblock Siam, 2000.

\bibitem{strikwerda2004finite}
John~C Strikwerda.
\newblock {\em Finite difference schemes and partial differential equations},
  volume~88.
\newblock Siam, 2004.

\bibitem{marin2005meshless}
Liviu Marin.
\newblock A meshless method for the numerical solution of the cauchy problem
  associated with three-dimensional helmholtz-type equations.
\newblock {\em Applied Mathematics and Computation}, 165(2):355--374, 2005.

\end{thebibliography}

\end{document}